\newtheorem{theo}{Theorem}[section]
\newtheorem{thm}[theo]{Theorem}
\newtheorem{lem}[theo]{Lemma}
\newtheorem{cor}[theo]{Corollary}
\newtheorem{prop}[theo]{Proposition}
\theoremstyle{definition}
\newtheorem{dfn}[theo]{Definition}
\theoremstyle{remark}
\numberwithin{equation}{section}
\newcommand{\complex}{\mathbb{C}}
\newcommand{\su}{\sqsubset}
\newcommand{\C}{\mathbb{C}}
\newcommand{\sphere}{\complex^*}
\newcommand{\disk}{\mathbb{D}}
\newcommand{\bbd}{\mathbb{D}}
\newcommand{\reals}{\mathbb{R}}
\newcommand{\ints}{\mathbb{Z}}
\newcommand{\vp}{\varphi}
\newcommand{\ol}{\overline}
\newcommand{\sm}{\setminus}
\newcommand{\A}{\mathcal{A}}
\newcommand{\ha}{\hat{a}}
\newcommand{\hb}{\hat{b}}
\newcommand{\bd}{\mathrm{Bd}}
\newcommand{\Bd}{\mathrm{Bd}}
\newcommand{\lam}{\mathcal{L}}
\newcommand{\bc}{\bar{c}}
\newcommand{\ba}{\bar{a}}
\newcommand{\diam}{\mathrm{diam}}
\newcommand{\ch}{\mathrm{CH}}
\newcommand{\si}{\sigma}
\newcommand{\0}{\emptyset}
\newcommand{\uc}{\mathbb{S}}
\newcommand{\ucirc}{\mathbb{S}}
\newcommand{\dml}{\mathrm{DML}}
\newcommand{\qml}{\mathrm{QML}}
\newcommand{\g}{\mathfrak{g}}
\newcommand{\h}{\mathfrak{h}}
\newcommand{\e}{\varepsilon}
\begin{document}

\title{Laminations in the language of leaves}

\author[Blokh]{Alexander M. Blokh}

\email{ablokh@math.uab.edu}
\author[Mimbs]{Debra Mimbs}
\email{dmimbs@uab.edu}
\author[Oversteegen]{Lex G. Oversteegen}
\email{overstee@math.uab.edu}
\address[A.~Blokh, D.~Mimbs and L.~Oversteegen]
{Department of Mathematics\\ University of Alabama at Birmingham\\
Birmingham, AL 35294-1170, USA}

\author[Valkenburg]{Kirsten I. S. Valkenburg}
\address{Faculteit der Exacte Wetenschappen, Afdeling Wiskunde, Vrije Universiteit, De Boelelaan 1081a,
1081 HV Amsterdam, The Netherlands} \email{kirstenvalkenburg@gmail.com}
\curraddr{Department of Mathematics and Statistics, University of Saskatchewan, 106 Wiggins Road, Saskatoon, SK, S7N 5E6, Canada}
\thanks{The first and third named authors were supported in part by NSF-DMS-0901038
and NSF-DMS-0906316. The last named author was supported by the
Netherlands Organization for Scientific Research (NWO), under grant
613.000.551; she also thanks the Department of Mathematics at UAB
for its hospitality.}

\subjclass[2010]{Primary 37F20; Secondary 37F10} \keywords{Thurston lamination, complex polynomial, Julia set}

\date{January 18, 2011; in the revised form December 13, 2011}

\begin{abstract}
Thurston defined invariant laminations, i.e. collections of chords
of the unit circle $\uc$ (called \emph{leaves}) that are pairwise
disjoint inside the open unit disk and satisfy a few dynamical
properties. 
To be directly associated to a polynomial, a lamination has to
be generated by an equivalence relation with specific properties on $\uc$;
then it is called a \emph{q-lamination}. Since not all laminations
are q-laminations, then from the point of view of studying
polynomials the most interesting are those of them which are limits
of q-laminations. In this paper we introduce an alternative
definition of an invariant lamination, which involves only
conditions on the leaves (and avoids gap invariance). The new class
of laminations is slightly smaller than that defined by Thurston and
is closed. We use this notion to elucidate the connection between
invariant laminations and invariant equivalence relations on $\uc$.
\end{abstract}

\maketitle

\section{Introduction}

Invariant laminations, introduced by Thurston in the early 1980's,
are used to study the dynamics of individual polynomials and the
parameter space of all polynomials, the latter in the quadratic case
(an expanded version of Thurston's preprint recently appeared
\cite{thu09}). Investigating the space of all quadratic invariant
laminations played a crucial role in \cite{thu09}. An important idea
of Thurston's was, as we see it, similar to one of the main ideas of
dynamics as a whole - to suggest a tool (\emph{laminations})
allowing one to model the dynamics under investigation on a
topologically/combinatorially nice object (in case of \cite{thu09}
one models polynomial dynamics on the Julia set by so-called
\emph{topological polynomials}, generated by laminations).

According to Thurston, a \emph{lamination} $\lam$ is a closed family of
chords inside the open unit disk $\disk$. These chords meet at most in a common endpoint
and satisfy some dynamical conditions; these chords are
called \emph{leaves (of the lamination)} and union of all
leaves from $\lam$ united with $\uc$ is denoted by $\lam^*$. A natural
\emph{direct} way to associate a lamination to a polynomial
$P$ of degree $d$ with a locally connected Julia set is as follows: (1)
define an equivalence relation $\sim_P$ on $\uc$ by identifying angles
if their external rays land at the same point (observe that $\sim_P$ on
$\uc$ is $\si_d$-invariant); (2) consider the edges of convex hulls of
equivalence classes and declare them to be the \emph{leaves}
of the corresponding \emph{lamination} $\lam_P$.

By \cite{kiw04, bco08} more advanced methods allow one to associate a
lamination to some polynomials with non-locally connected Julia sets
(by declaring two angles equivalent if impressions of
their external rays are non-disjoint and extending this relation by
transitivity). We call laminations, generated by equivalence relations
similar to $\sim_P$ above, \emph{q-laminations}. They form an important
class of laminations, many of which correspond to complex polynomials
with connected Julia sets. In all these cases the lamination is found
through the study of the topology of the Julia set of the polynomial.

The drawback of this approach is that it fails if the topology of the
Julia set is complicated (e.g., if a quadratic polynomial has a fixed
\emph{Cremer point} \cite{bo06}). Thus, even though ultimately
laminations are a tool which allows one to study both individual
polynomials and their parameter space, in some cases it is not obvious as to
what laminations (or what equivalence relations on the circle) can be
directly connected in a meaningful way to certain polynomials. Hence
one needs a non-direct way of associating a lamination (or, more
generally, some combinatorial structure) to a polynomial with a
complicated Julia set.

A possibility here is as follows. For a polynomial $P_c(z)=z^2+c$,
consider sequences of parameters $c_i\to c$ with $P_{c_i}=P_i$ having
locally connected Julia sets and associated lamination $\lam_{P_i}$.
These laminations $\lam_{P_i}$ (systems of chords of $\uc$) may
converge to another lamination (system of chords of $\uc$) in the sense
that the continua $\lam^*_{P_i}$ may converge to a subcontinuum of
$\disk$ in the Hausdorff sense, and the limit continuum $\lam^*$ then
comes from an appropriate lamination $\lam$). In this case the
lamination $\lam$ is called the \emph{Hausdorff limit} of laminations
$\lam_{P_i}$; one may associate all such Hausdorff limit laminations to
$c$.

Using this notion of convergence
one can define the Hausdorff closures of sets of laminations. Hence the
space of laminations useful for studying polynomials could be
a closed set of laminations which contains the Hausdorff closure of the
set of all q-laminations, but is not much bigger.

To describe a candidate set of laminations we introduce a new notion of a \emph{sibling
invariant lamination} which is slightly more restrictive than the
one given by Thurston. The new definition is given intrinsically
(i.e., by only listing properties on the leaves of the lamination). We show that the
family of all sibling invariant laminations is closed and contains
all q-laminations. The new definition significantly simplifies the
verification of the fact that a system of chords of $\uc$ is an
invariant lamination. Thurston \cite{thu09} introduced the class of
\emph{clean} laminations. We use our tools to show that clean laminations are (up to
a finite modification) $q$-laminations. In Section 6 we apply
these ideas to the degree $2$ case and show that in this case all
clean Thurston invariant laminations are $q$-laminations.

\smallskip

\noindent{\textbf{Acknowledgments}}. The authors would like to thank
the referee for useful suggestions and comments.

\section{Laminations: classical definitions}

\subsection{Preliminaries}

Let $\complex$ be the complex plane, $\mathbb{S}\subset\complex$  the unit circle identified with
$\reals/\ints$ and let $\disk\subset \complex$ be the open unit
disk. Define a map $\sigma_d:\mathbb{S} \rightarrow \mathbb{S}$ by
$\sigma_d(z) = dz$ mod 1, $d \geq 2$. By a \emph{chord} in the unit disk we
mean a segment of a straight line connecting two points of the unit circle.
A {\em prelamination}
$\mathcal{L}$ is a collection of chords in $\mathbb{D}$, called {\em
leaves}, such that any two leaves of $\mathcal{L}$ meet at most in a
point of $\uc$.  If all points of the circle are elements of $\lam$
(seen as degenerate leaves) and $\bigcup\mathcal{L}=\lam^*$ is
closed in $\complex$, then we call $\mathcal{L}$ a {\em lamination}.
Hence, one obtains a lamination by closing a prelamination and adding all points of $\uc$ viewed as degenerate leaves. If
$\ell\in\mathcal{L}$ and $\ell\cap\mathbb{S} = \{a,b\}$ then we
write $\ell = \overline{ab}$. 
We use the term ``leaf'' to refer to a non-degenerate leaf in the
lamination, and specify when a leaf may be degenerate, i.e. a point in
$\mathbb{S}$.

Given a leaf $\ell =\overline{ab} \in \mathcal{L}$, let
$\sigma_d(\ell)$ be the chord with endpoints $\sigma_d(a)$ and
$\sigma_d(b)$.  If $\sigma_d(a) = \sigma_d(b)$, call $\ell$ a {\em
critical leaf} and $\sigma_d(a)$ a {\em critical value}.  Let
$\sigma_d^{\ast}:\mathcal{L}^{\ast}\rightarrow\ol{\mathbb{D}}$ be the
linear extension of $\sigma_d$ over all the leaves in $\mathcal{L}$. It
is not hard to check that $\sigma_d^{\ast}$ is continuous. Also,
$\sigma_d$ is locally one-to-one on $\mathbb{S}$, and $\sigma_d^{\ast}$
is one-to-one on any given non-critical leaf.  Note that if
$\mathcal{L}$ is a lamination, then $\mathcal{L}^{\ast}$ is a
continuum.

\begin{dfn}[Gap] \label{dfn-gap}
A {\em gap} $G$ of a lamination $\mathcal{L}$ is the closure of a
component of $\mathbb{D}\setminus\mathcal{L}^{\ast}$; its boundary
leaves are called \emph{edges (of a gap)}. We also say that a leaf
$\ell$ is an \emph{edge} of $\ell$.
\end{dfn}

For each set $A\subset \ol{\disk}$ we denote $A \cap \uc$ by
$\partial(A)$. If $G$ is a leaf or a gap  of $\lam$, it follows that
$G$ coincides with the convex hull of $\partial(G)$. If $G$ is a
leaf or a gap of $\lam$ we let $\sigma_d(G)$ be the convex hull of
$\sigma_d(\partial(G))$. Also, by $\Bd(G)$ we denote the topological
boundary of $G$. Notice that the topological boundary of $G$ is a
Jordan curve which consists of leaves and points on $\mathbb{S}$, so
that $\Bd(G) \cap \mathbb{S} = G \cap \mathbb{S} =\partial(G)$. A
gap $G$ is called \emph{infinite} if and only if $\partial(G)$ is
infinite. A gap $G$ is called \emph{critical} if $\si_d|_{\partial
G}$ is not one-to-one. Observe that there are two types of
degenerate leaves of $\mathcal{L}^{\ast}$ which are not endpoints of
non-degenerate leaves: (1) certain vertices of gaps, (2) points of
$\uc$, separated from other points of $\mathbb{S}$ by a sequence of
leaves of $\mathcal{L}$.

\subsection{q-laminations}\label{qlam}

Let $P$ be a complex polynomial with locally connected Julia set $J$.
Then $J$ is connected and there exists a conformal map $\vp:\sphere\sm
\ol{\disk} \to \sphere\sm K$, where $K$ is the \emph{filled-in Julia
set} (i.e., the complement of the unbounded component of $J$ in
$\complex$). One can choose $\vp$ so that $\vp'(0)>0$ and $P\circ \vp=\vp\circ \si_d$,
where $\si_d(z)=z^d$ and $d$ is the degree of $P$. Since $J$ is locally
connected, $\vp$ extends over the boundary $\uc$ of $\disk$. We denote
the extended map also by $\vp$. Define an equivalence relation
$\approx_P$ on $\uc$ by $x \approx_P y$ if and only if $\vp(x)=\vp(y)$.
Since $J$ is the boundary of $\sphere\sm K$, then $J$ is homeomorphic to $\uc/{\approx_P}$. Clearly, the map $\si_d$
induces a map $f_d:\uc/{\approx_P}\to \uc/{\approx_P}$ and the maps $P|_J$
and $f_d$ are conjugate. It is known that all equivalence classes of
$\approx_P$ are finite. The collection of boundary edges of convex hulls of all
equivalences classes of $\approx_P$ is a lamination denoted by
$\lam_P$.

Equivalence relations analogous to $\approx_P$ can be introduced
with no reference to polynomials \cite{bl02}. Let $\sim$ be an
equivalence relation on $\ucirc$. Equivalence classes of $\sim$ will
be called \emph{($\sim$-)classes} and will be denoted by Gothic
letters. Also, given a closed set $A\subset\C$, let $\ch(A)$ denote
the convex hull of the set $A$ in $\C$.

\begin{dfn}\label{d-lameq}
An equivalence relation $\sim$ is a \emph{($d$-)invariant
laminational} equivalence relation if:

\noindent (E1) $\sim$ is \emph{closed}: the graph of $\sim$ is a closed
set in $\ucirc \times \ucirc$;

\noindent (E2) $\sim$ 
is \emph{unlinked}: if $\g_1$ and $\g_2$ are distinct $\sim$-classes,
then their convex hulls $\ch(\g_1), \ch(\g_2)$ in the unit disk $\bbd$
are disjoint,

\noindent (D1) $\sim$ is \emph{forward invariant}: for a class $\g$,
the set $\si_d(\g)$ is a class too

\noindent which implies that

\noindent (D2) $\sim$ is \emph{backward invariant}: for a class $\g$,
its preimage $\si_d^{-1}(\g)=\{x\in \ucirc: \si_d(x)\in \g\}$ is a
union of classes;

\noindent (D3) for any $\sim$-class $\g$ with more than two points, the
map $\si_d|_{\g}: \g\to \si_d(\g)$ is a \emph{covering map with
positive orientation}, i.e., for every connected component $(s, t)$ of
$\ucirc\setminus \g$ the arc in the circle $(\si_d(s), \si_d(t))$ is a
connected component of $\ucirc\setminus \si_d(\g)$;

\noindent (D4) all $\sim$-classes are finite.
\end{dfn}

There is an important connection between laminations and
(invariant laminational) equivalence relations.

\begin{dfn}\label{d-lamtoeq}\label{d:fineq}
Let $\lam$ be a lamination. Define the equivalence relation
$\approx_\lam$ by declaring that $x{\approx_\lam}y$ if and only if
there exists a finite concatenation of leaves of $\lam$ joining $x$
to $y$.
\end{dfn}

Now we are ready to define \emph{q-laminations}.

\begin{dfn}\label{d-qlam}
A lamination $\lam$ is called a \emph{q-lamination} if the
equivalence relation $\approx_\lam$ is an invariant laminational
equivalence relation and $\lam$ consists exactly of boundary edges of the
convex hulls of all $\approx_\lam$-classes together with all points
of $\uc$. If an invariant laminational equivalence relation $\sim$
is given and $\lam$ is formed by all edges from the convex hulls of
all $\sim$-classes together with all points of $\uc$ then $\lam$ is
called the \emph{q-lamination (generated by $\sim$)} and is denoted
by $\lam_\sim$. Clearly, if $\lam$ is a q-lamination, then it is a
q-lamination generated by $\approx_\lam$.
\end{dfn}

Let $\sim$ be a laminational equivalence relation and $p: \ucirc\to
J_\sim=\ucirc/{\sim}$ be the quotient map of $\ucirc$ onto its
quotient space $J_\sim$, let $f_\sim:J_\sim \to J_\sim$ be the  map
induced by $\sigma_d$. We call $J_\sim$ a \emph{topological Julia
set} and the induced map $f_\sim$ a \emph{topological polynomial}.
It is easy to see from  definition~\ref{d-lameq} that leaves of
$\lam_\sim$ map to leaves of $\lam_\sim$ under $\si_d$; moreover,
the map $\si_d$ acting on leaves and gaps of $\lam_\sim$ has also
other more specific properties analogous to (D1) - (D4) above. This
leads to the abstract notion of an invariant lamination \cite{thu09}
that allows for laminations which are not directly associated to a
laminational equivalence relation and, hence, do not correspond
(directly) to a polynomial.

\subsection{Invariant laminations due to Thurston}


\begin{dfn}[Monotone Map]\label{monotone}
Let $X$, $Y$ be topological spaces and $f:X\rightarrow Y$ be
continuous. Then $f$ is said to be {\em monotone} if $f^{-1}(y)$ is
connected for each $y \in Y$. It is known that if $f$ is monotone and
$X$ is a continuum then $f^{-1}(Z)$ is connected for every connected
$Z\subset f(X)$.
\end{dfn}

Definition~\ref{dfn-Thurston} is due to Thurston \cite{thu09}; recall
that gaps are defined in Definition~\ref{dfn-gap}.

\begin{dfn}[Thurston Invariant Lamination \cite{thu09}] \label{dfn-Thurston}
A lamination $\mathcal{L}$ is {\em Thurston $d$-invariant} if it
satisfies the following conditions.

\begin{enumerate}

\item Forward $d$-invariance: for any leaf $\ell=\overline{pq} \in
    \mathcal{L}$, either $\sigma_d(p) = \sigma_d(q)$, or
    $\overline{\sigma_d(p)\sigma_d(q)}=\si_d(\ell) \in \mathcal{L}$.

\item Backward invariance: for any leaf $\overline{pq} \in
    \mathcal{L}$, there exists a collection of $d$ {\bf disjoint}
    leaves in $\mathcal{L}$ (this collection of leaves may not be
    unique), each joining a pre-image of $p$ to a pre-image of $q$.

\item Gap invariance: For any gap $G$, the convex hull $H$ of $\si_d(G\cap\uc)$
 is a gap, a leaf, or a single point (of $\uc$).







\end{enumerate}

If $H$ is a gap, $\si_d^*|_{\Bd(G)}:\Bd(G)\to\bd(H)$ must map as the
composition of a monotone map and a covering map to the boundary of
the image gap, with positive orientation (the image of a point
moving clockwise around $\bd(G)$ must move clockwise around the image $\bd(H)$
of $G$).
\end{dfn}


\section{Sibling Invariant Laminations}


\subsection{An alternative definition}

Note that in Definition~\ref{dfn-invariant} we do not require the
invariance of gaps.

\begin{dfn}[Sibling $d$-Invariant Lamination \cite{mim10}] \label{dfn-invariant}
A (pre)la-mination $\lam$ is {\em sibling $d$-invariant} if:

\begin{enumerate}

\item for each $\ell\in\mathcal{L}$ either
    $\sigma_d(\ell)\in\mathcal{L}$ or $\sigma_d(\ell)$ is a point
    in $\mathbb{S}$,

\item for each $\ell\in\mathcal{L}$ there exists a leaf
    $\ell'\in\mathcal{L}$  such that $\sigma_d(\ell')=\ell$,

\item \label{disjoint} for each $\ell\in\mathcal{L}$ such that
    $\sigma_d(\ell)$ is a non-degenerate leaf, there exist $\mathbf d$ {\bf
    disjoint} leaves $\ell_1, \dots, \ell_d$ in $\mathcal{L}$ such
    that $\ell=\ell_1$ and $\sigma_d(\ell_i) = \sigma_d(\ell)$ for
    all $i$.

\end{enumerate}

\end{dfn}

We need to make a few remarks. Given a continuum or a point $K\subset \lam^*$
which maps one-to-one onto $\si_d^*(K)$, we call a continuum or a point $T\subset
\lam^*$ a \emph{sibling (of $K$)} if $K\cap T=\0$ and $T$ maps onto $\si_d^*(K)$ in a
one-to-one fashion too (thus, siblings are homeomorphic and disjoint). E.g., the
leaves $\ell_2, \dots, \ell_d$ from Definition~\ref{dfn-invariant} are
siblings of $\ell$. The collection $\{\ell, \ell_2, \dots,
\ell_d\}$ of leaves from Definition~\ref{dfn-invariant} is called a
\emph{full sibling collection (of $\ell$)}. In general, for a
continuum or a point $K\subset \lam^*$ which maps one-to-one onto $\si_d^*(K)$, a
collection of $d$ sets made up of $K$ and its pairwise disjoint
siblings is called a \emph{full sibling collection (of $K$)}.
Often we talk about siblings without assuming the
existence of a full sibling collection (e.g., in the context
of Thurston $d$-invariant laminations).

Let $\lam$ be a sibling $d$-invariant  lamination. Then by
Definition~\ref{dfn-invariant} (1) we see that
Definition~\ref{dfn-Thurston} (1) is satisfied. Now, let $\ell\in
\lam$ be a leaf. Then by Definition~\ref{dfn-invariant} (2) and (3)
there are $d$ pairwise disjoint leaves of $\lam$ which map onto
$\ell$; thus, Definition~\ref{dfn-Thurston} (2) is satisfied.
Therefore both sibling $d$-invariant  laminations and Thurston
$d$-invariant laminations satisfy conditions (1) and (2) of
Definition~\ref{dfn-Thurston}, i.e. are \emph{forward $d$-invariant}
and \emph{backward $d$-invariant}. Both conditions deal with
\emph{leaves} and in that respect are intrinsic to $\lam$ which is
defined as a collection of leaves. However having these conditions
is not enough to define a meaningful dynamic collection of leaves;
there are examples of laminations satisfying conditions (1)-(2) of
Definition~\ref{dfn-Thurston} which are not gap invariant. Therefore
one needs to add an extra condition to forward and backward
invariance.

The choice made in Definition~\ref{dfn-Thurston} deals with gaps,
i.e. closures of components of the complement $\disk\sm \lam^*$.
This is a straightforward way to ensure that $\si^*_d$ has a nice
extension over the plane. However a drawback of this approach is
that while $\lam$ otherwise is defined as a family of chords of
$\disk$ (leaves), in gap invariance we directly talk about other
objects (gaps). One can argue that gap invariance of $\lam$ under
$\si_d$ is not sufficiently intrinsic since $\lam$  is defined as a
collection of leaves. As a consequence it is often more cumbersome
to verify gap invariance. This justifies the search for a similar
definition of an invariant lamination which deals only with leaves.
Above we propose the notion of a \emph{sibling ($d$)-invariant
lamination}.

\subsection{Sibling invariant laminations are gap invariant}\label{gapinv1}

Now we show that any sibling $d$-invariant lamination is a Thurston
$d$-invariant lamination. Some complications below are caused by the fact that we
do not yet know that the lamination is gap invariant. E.g., extending
the map $\si^*$ over $\disk$ to a suitably nice map (i.e., the
composition of a monotone and open map) is impossible if the lamination
is not gap invariant.




\begin{thm} \label{gapinv}
Suppose that $G$ is a gap of a sibling $d$-invariant lamination
$\mathcal{L}$.  Then either

\begin{enumerate}

\item $\sigma_d(G)$ is a point in $\mathbb{S}$ or a leaf of
    $\mathcal{L}$, or

\item there is a gap $H$ of $\mathcal{L}$ such that $\sigma_d(G) =
    H$, and the map $\sigma_d^{\ast}|_{\Bd(G)}: \Bd(G) \rightarrow \Bd(H)$ is
    the positively oriented
  composition of a monotone map $m: \Bd(G) \rightarrow S$, where
  $S$ is a simple closed curve, and a covering map $g: S
  \rightarrow \Bd(H)$.
\end{enumerate}

Thus, any sibling $d$-invariant lamination is a Thurston $d$-invariant lamination.
\end{thm}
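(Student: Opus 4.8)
The plan is to take a gap $G$ of the sibling $d$-invariant lamination $\lam$ and analyze its image $\sigma_d(G)=\ch(\sigma_d(\partial G))$. The first and easy case is when $\sigma_d$ collapses $\partial G$ so much that $\sigma_d(G)$ is a point or a single leaf; then we are in alternative (1) and there is nothing more to prove. So I would immediately reduce to the situation where $\sigma_d(G)$ is a nondegenerate two-dimensional convex set $H$, and the real work is to show that $H$ is actually a \emph{gap} of $\lam$ (not merely some convex hull that might cut across leaves of $\lam$) and that the boundary map factors as monotone-followed-by-covering with positive orientation.

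First I would establish that $H=\sigma_d(G)$ is a gap. The key point is that $\lam^*$ is closed and the leaves of $\lam$ are pairwise non-crossing inside $\disk$, so to see that $\Int(H)$ misses $\lam^*$ it suffices to show that no leaf of $\lam$ enters the interior of $H$. Here is where sibling invariance does the essential work: any leaf $m=\overline{uv}$ of $\lam$ with endpoints in the arc $\partial(H)$, and in particular any hypothetical leaf crossing into $\Int H$, must have a full sibling collection of $d$ pairwise disjoint preimage leaves (by Definition~\ref{dfn-invariant}(2)--(3)); using that each edge of $G$ maps into $\Bd(H)$ and that the preimages of $m$ are disjoint from each other, I would argue that one of these preimage leaves must lie along $\Bd(G)$ or inside $G$, contradicting the fact that $G$ is a gap (its interior meets no leaves). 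Thus $\Int H$ is a complementary component of $\lam^*$ and $H$ is its closure, i.e.\ a gap.

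Next I would set up the boundary map. Writing $\Bd(G)$ as a Jordan curve whose intersection with $\uc$ is $\partial(G)$, the linear extension $\sigma_d^*$ is continuous and carries $\Bd(G)$ onto $\Bd(H)$. To get the monotone-then-covering factorization, I would let $S=\Bd(G)/{\sim}$ be the quotient of $\Bd(G)$ obtained by collapsing each maximal sub-arc on which $\sigma_d^*$ is not injective to a point; the induced quotient map $m:\Bd(G)\to S$ is monotone by construction, $S$ is a simple closed curve, and the factor map $g:S\to\Bd(H)$ is a local homeomorphism, hence a covering map of the circle $S$ onto the circle $\Bd(H)$. Positive orientation is inherited from the fact that $\sigma_d$ preserves the cyclic order on $\uc$: as a point moves clockwise around $\partial(G)$ its image under $\sigma_d$ moves clockwise around $\partial(H)$ (away from the collapsed arcs), and this orientation passes to $S$ and then to $\Bd(H)$. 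Assembling these facts gives alternative (2).

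The main obstacle I expect is the first step — proving that $\Int(\sigma_d(G))$ contains no leaf of $\lam$, i.e.\ that $H$ is genuinely a gap. Without gap invariance available as a hypothesis, this cannot be read off directly and must be forced purely from the disjoint-sibling condition, which is exactly the delicate point the authors flagged in the paragraph preceding the theorem (``extending the map $\si^*$ over $\disk$\ldots is impossible if the lamination is not gap invariant''). The careful bookkeeping of which sibling preimage lands where, and ruling out the degenerate sub-cases where several edges of $G$ collapse together, is where the argument will require the most attention; once $H$ is known to be a gap, the factorization and orientation claims are comparatively routine topology of maps between circles.
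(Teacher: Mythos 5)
Your proposal correctly isolates the hard point (that $H=\si_d(G)$ is genuinely a gap), but the way you propose to prove it is circular, and a second step is technically broken. The circularity: your argument that no leaf of $\lam$ enters $\Int(H)$ relies on the assertion that ``each edge of $G$ maps into $\Bd(H)$.'' That assertion is not automatic --- it is essentially the non-folding/covering property that constitutes the theorem. A priori two adjacent edges of $G$ could have the same image, or an edge of $G$ could map to a \emph{diagonal} of $H=\ch(\si_d(\partial G))$, in which case some edges of $H$ are not leaves of $\lam$ at all, and a leaf can enter $\Int(H)$ by crossing such an edge, evading your case analysis. Ruling this out is exactly what the paper's machinery is for: Lemmas~\ref{colpol} and~\ref{colpol1} show that \emph{any} local collapsing on $\Bd(G)$ (adjacent edges with equal images, or equal-image edges joined by a chain of critical edges) forces all of $G$ to map to a single leaf, i.e.\ forces case (1); Lemma~\ref{posor}, Corollary~\ref{cor-posor1} (orientation via disjoint full sibling collections) and Lemma~\ref{triorder} then control the non-collapsed case. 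Likewise your claim that one of the $d$ disjoint preimage leaves of an intruding leaf ``must lie along $\Bd(G)$ or inside $G$'' is precisely the content of Lemma~\ref{triorder}, and it cannot just be asserted: note that leaves of $\lam$ can never enter the interior of the gap $G$, so the contradiction must come from a combinatorial argument showing a preimage leaf is \emph{forced} there, which requires the preimage-counting arguments (Proposition~\ref{abcount}, Lemma~\ref{loca}).

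Second, your factorization does not work as defined. If you collapse each maximal sub-arc of $\Bd(G)$ on which $\si_d^*$ is \emph{not injective}, there is in general no induced map $g$ with $g\circ m=\si_d^*|_{\Bd(G)}$: at a fold (two adjacent edges with the same non-degenerate image) $\si_d^*$ is non-injective on the union but certainly not constant, so it does not factor through the quotient. One must instead collapse the maximal arcs on which $\si_d^*$ is \emph{constant} (concatenations of critical leaves), as the paper does; and then the local injectivity of $g$ is not ``by construction'' but is equivalent to the absence of folds --- again the content of the reductions in the first paragraph of the paper's proof. Finally, the positive-orientation claim cannot simply be ``inherited from the fact that $\si_d$ preserves the cyclic order on $\uc$'': $\si_d$ preserves cyclic order only locally, and establishing that the boundary map is globally positively oriented (including at vertices of $G$ and at endpoints of critical concatenations) is where the sibling hypothesis enters in an essential way (Corollary~\ref{cor-posor1}, plus the Lemma~\ref{loca} argument at critical chains). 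So while your outline matches the shape of the paper's argument, each of its three steps presupposes a lemma whose proof is the actual substance of the theorem.
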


To prove Theorem~\ref{gapinv} we prove a few lemmas.
Given a point $x$, call any point $\hat x\in \mathbb{S}$ with $\sigma_d(\hat x)=x$
an \emph{$x$-point}.
If a lamination is given, by an $\ha\hb$-{\em leaf}, we mean a leaf
that maps to $\ol{ab}$. The word ``chord" is used in lieu of ``leaf"
in reference to a chord of $\disk$ which may not be a leaf of
$\lam$. By $[a, b]_\uc$, $a, b\in \uc$ we mean the closed arc of
$\uc$ from $a$ to $b$, and by $(a,b)_\uc$ we mean the open arc of
$\uc$ from $a$ to $b$. The direction of the arc, clockwise
(\emph{negative}) or counterclockwise (\emph{positive}), will be
clear from the context; also, sometimes we simply write $[a, b], (a,
b)$ etc.  By $<$ we denote the positive (circular) order on $\uc$.
If we say that points are \emph{ordered} on $\uc$ we mean that they
are either positively or negatively ordered.
Proposition~\ref{abcount} is left to the reader; observe, that in
Proposition~\ref{abcount} we do not assume the existence of a lamination.



\begin{prop}\label{abcount}
Suppose that $a_1<b_1<a_2<b_2<\dots<a_n<b_n$ are $2n$ points in the circle.
Then for any point $a_i$ and $b_j$ either component of
$\uc\sm\{a_i, b_j\}$ contains the same number of $a$-points and $b$-points.
In particular, if $\ha, \hb\in \uc$ are such that $a=\sigma_d(\ha) \neq
\sigma_d(\hb)=b$, then either component of $\uc\sm \{\ha, \hb\}$
contains the same number of $a$-points and $b$-points.
\end{prop}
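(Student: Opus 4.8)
The plan is to establish the purely combinatorial first assertion by a parity argument, and then to deduce the ``in particular'' clause by verifying that the $\sigma_d$-preimages of two distinct points are arranged in exactly the alternating pattern hypothesized.

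First I would record the structural fact underlying everything: in the given cyclic order the $2n$ points strictly alternate in type, since the two circle-neighbors of each $a_i$ are the $b$-points $b_{i-1}, b_i$, and the two neighbors of each $b_j$ are the $a$-points $a_j, a_{j+1}$ (all indices read modulo $n$). Now fix $a_i$ and $b_j$ and delete them; the circle splits into two open arcs, and the points lying in each arc form a contiguous block of the original alternating sequence, hence themselves alternate in type. Consider either of these arcs. If it is empty there is nothing to check. Otherwise its endpoint-adjacent points are determined: the point of the arc next to $a_i$ is a $b$-point and the point next to $b_j$ is an $a$-point. Thus the arc lists an alternating string that begins with a $b$-point and ends with an $a$-point; such a string has even length and contains exactly as many $a$-points as $b$-points. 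Since this holds for both arcs, the first assertion follows. Equivalently, one may place $a_k$ at position $2k-1$ and $b_k$ at position $2k$ around $\uc$; as $a_i$ and $b_j$ then occupy positions of opposite parity, each arc between them contains an even number of interior positions, split evenly between odd positions (the $a$-points) and even positions (the $b$-points).

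For the second statement I would use that $\sigma_d$ is $d$-to-$1$ and maps each fundamental arc $[\frac{k}{d},\frac{k+1}{d})$, $0\le k<d$, homeomorphically onto $\uc$. Hence each such fundamental arc contains exactly one preimage of $a$ and exactly one preimage of $b$, and because $a\ne b$ the relative order of these two preimages inside the fundamental arc is the same for every $k$. Therefore the $d$ preimages of $a$ and the $d$ preimages of $b$ interleave, so (after relabeling) these $2d$ distinct points occur around $\uc$ in the alternating order $a_1<b_1<\dots<a_d<b_d$. Since $\ha$ is one of the $a$-points and $\hb$ one of the $b$-points, the first part applies with $n=d$, $a_i=\ha$, and $b_j=\hb$, giving the claim.

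There is no substantial obstacle here, as the statement is explicitly left to the reader. The only points needing a little care are the cyclic bookkeeping (handling the arc that wraps past the index $n$, which the position-parity reformulation renders transparent) and the verification that the two families of preimages genuinely interleave, which reduces to the single observation that $a\ne b$ forces a consistent order of the $a$- and $b$-preimages within each fundamental arc.
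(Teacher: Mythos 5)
Your proof is correct: the alternation/parity argument for the first assertion and the observation that the $d$ preimages of $a$ and of $b$ interleave (one of each in every fundamental arc $[\frac{k}{d},\frac{k+1}{d})$, in a consistent order since $a\neq b$) together give a complete argument. The paper offers no proof to compare against --- Proposition~\ref{abcount} is explicitly left to the reader --- and your argument is precisely the elementary one the authors intend, with the only delicate points (cyclic bookkeeping and the interleaving of the two fibers) handled correctly.
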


Since $2$-invariant laminations are invariant under the rotation by
$\frac{1}{2}$, then, given a $2$-invariant lamination we see that its siblings are rotations
of each other. Even though this is not typically
true for laminations of higher degree (see Figure~\ref{ocon}),
Lemma~\ref{posor} states that sibling leaves must connect in the
same order. To state it we need Definition~\ref{orient}.

\begin{figure}[h!]
\centering\def\svgwidth{.61\columnwidth}\fbox{\fbox{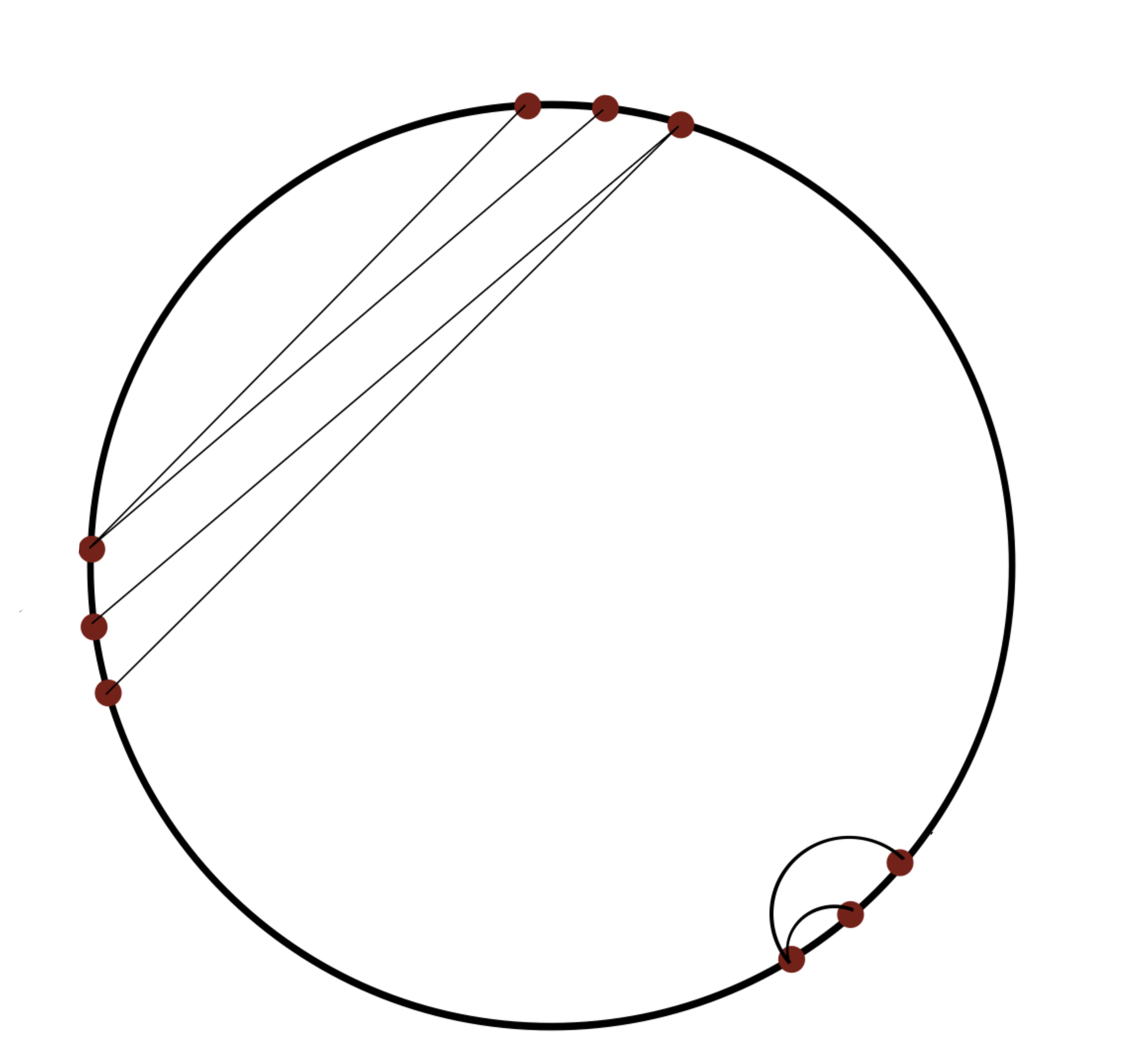}} 
 \caption{Sibling ``arcs"} This is an example of
sibling ``arcs" under $\sigma_3$.  Notice that while the arcs
connect points in different ``patches" and are not found by rigid
rotation, the manner in which the endpoints connect preserves order.
\label{ocon}
\end{figure}

\begin{dfn}\label{orient}
Consider two disjoint sets $A, B\subset \uc$ such that $\si_d(A)=\si_d(B)=C$ is
the one-to-one image of $A$ and $B$ under $\si_d$. Then $A, B$ and $C$
are said to have the \emph{same orientation} if for any three points
$x, y, z\in A$ their siblings $x', y', z'\in B$ and their images
$\si_d(x), \si_d(y), \si_d(z)$ have the same circular orientation as
$x, y, z$. As we walk along the circle in the positive direction from a
point $u\in A$, its sibling $u'\in B$, and its image $\si_d(u)$, we
will meet points of $A$, their siblings in $B$, and their images in $C$
in the same order.
\end{dfn}

Any two two-point sets have the same orientation; this is not
necessarily true for sets with more points. The fact that sets have
the same orientation sometimes implies ``structural'' conclusions.
For a set $A\subset \lam^*$ we write $A\su \lam^*$ if $A\cap \uc$ is
zero-dimensional.

\begin{dfn}\label{triod-def}
A \emph{triod} is a homeomorphic image of the \emph{simple triod}
$\tau$ (the union of three arcs which share a common endpoint).
Denote by $B(T)$ the union of the endpoints and the vertex of a
triod $T$. In what follows we \emph{always} consider triods
$T\subset \disk$ with $B(T)\subset \uc$. The edge of $T$, which
separates (inside $\ol{\disk}$) the  endpoints of $T$ non-belonging
to it, is called the \emph{central edge} of $T$ while the other
edges of $T$ are said to be \emph{sides} of $T$. Similarly, if $A$
is the union (the concatenation) of two leaves $\ol{av}\cup \ol{vb}$
we set $B(A)=\{a, v, b\}$.
\end{dfn}

To avoid ambiguity we call a subarc of $\uc$ a \emph{circle arc}. By
an \emph{arc in $\lam^*$} we mean a \emph{topological arc} (a
one-to-one image of an interval). Given a set $A\subset \lam^*$ we
sometimes need to consider a topological arc in $A$ with endpoints
$x, y$ (it will always be clear which arc we actually consider);
such an arc will be denoted by $[x, y]_A$. Thus, $[a, b]_\uc$ is
always a circle arc. By default arcs $[a, b], (a, b)$ etc. are
circle arcs.

\begin{lem} \label{posor}
{\rm (1)} Let $x_1<a_1<b_1<x_2<\cdots<x_n<a_n< b_n < x_1$ be $3n$
points in $\mathbb{S}$. If for each $i$ there exists $r(i), m(i)\in
\{1, \dots, n\}$ such that arcs $A_i=\ol{x_ia_{r(i)}}\cup
\ol{x_ib_{m(i)}}$ are pairwise disjoint ($1\le i\le n$) then $x_i<
a_{r(i)}< b_{m(i)}$ for each $i$.

{\rm (2)} Let $x_1<a_1<b_1<c_1<x_2<\cdots<x_n<a_n<b_n<c_n<x_1$ be
$4n$ points in $\mathbb{S}$. 
If for each $i$ there exist $r(i), m(i), l(i) \in \{1, \dots, n\}$ such
that triods $T_i=\ol{x_ia_{r(i)}}\cup \ol{x_ib_{m(i)}}\cup
\ol{x_ic_{l(i)}}$ are pairwise disjoint ($1\le i\le d$) then $x_i<
a_{r(i)}<b_{m(i)}<c_{l(i)}$ for each $i$.
\end{lem}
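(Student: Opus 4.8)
The plan is to prove both parts by contradiction, using the elementary planar fact that a compact connected subset of $\ol{\disk}$ meeting $\uc$ in finitely many points separates $\disk$ into regions whose circle boundaries are exactly the arcs cut off between those points, and then deriving a \emph{numerical} contradiction from the rigidly periodic arrangement of the base points. A preliminary observation, used throughout, is that since the arcs $A_i$ (resp.\ the triods $T_i$) are pairwise disjoint and the listed points are all distinct, no $a$-point (resp.\ no $a$-, $b$-, or $c$-point) can be an endpoint of two of them; hence $r,m$ (resp.\ $r,m,l$) are permutations of $\{1,\dots,n\}$, so every point of the configuration is used exactly once. Consequently any circle arc whose base points are precisely those of a subcollection of the $A_j$ (resp.\ of the $T_j$) contains equally many points of each type, since each $A_j$ contributes one $x$-, one $a$-, and one $b$-point (resp.\ each $T_j$ one of each of the four types).

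For part (1), fix $i$ and suppose toward a contradiction that the orientation is reversed, i.e.\ $x_i<b_{m(i)}<a_{r(i)}$. The arc $A_i=\ol{x_ia_{r(i)}}\cup\ol{x_ib_{m(i)}}$ is compact and connected and meets $\uc$ in the three points $x_i,b_{m(i)},a_{r(i)}$, so it cuts $\disk$ into three regions whose circle boundaries are the open arcs $(x_i,b_{m(i)})$, $(b_{m(i)},a_{r(i)})$, $(a_{r(i)},x_i)$. Because each remaining $A_j$ is connected and disjoint from $A_i$, it lies in a single one of these regions, so all three of its base points lie in one open arc; by the preliminary observation the number of $x$-, $a$-, and $b$-points then agree inside each of the three arcs. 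On the other hand the global pattern is periodic with block $x_k a_k b_k$: walking positively from $x_i$ one meets $a_i$ first, and the point immediately preceding $b_{m(i)}$ is $a_{m(i)}$. A direct count shows the open arc $(x_i,b_{m(i)})$ contains exactly one more $a$-point than $x$-point (and equally many $b$- as $x$-points), contradicting the balance just established; hence $x_i<a_{r(i)}<b_{m(i)}$. The wraparound case $m(i)<i$ is handled by the same count read cyclically.

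For part (2) the argument is identical with one extra iteration. Fix $i$ and let $p_1,p_2,p_3$ be $a_{r(i)},b_{m(i)},c_{l(i)}$ listed in the positive order in which they are met from $x_i$. The triod $T_i$ cuts $\disk$ into four regions with circle boundaries $(x_i,p_1),(p_1,p_2),(p_2,p_3),(p_3,x_i)$, and exactly as above disjointness forces each other triod into one region, so in each of these arcs the four counts $\#x=\#a=\#b=\#c$ agree. Using the periodic block $x_k a_k b_k c_k$, the balance in $(x_i,p_1)$ is compatible with the pattern only when $p_1$ is an $a$-point, so $p_1=a_{r(i)}$. Next, since $p_1$ is itself an endpoint of $T_i$ while both $(x_i,p_1)$ and $(p_1,p_2)$ are balanced, the arc $(x_i,p_2)$ is balanced after discarding the single $a$-point $p_1$; comparing this with the pattern forces $p_2$ to be a $b$-point, so $p_2=b_{m(i)}$, whence $p_3=c_{l(i)}$ by elimination. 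This is precisely $x_i<a_{r(i)}<b_{m(i)}<c_{l(i)}$.

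The step I expect to be the main obstacle is the planar-separation bookkeeping rather than the counting. One must justify carefully that a connected set disjoint from $A_i$ (resp.\ $T_i$) lies in a single complementary region and therefore has all of its circle points on one of the prescribed arcs, and that the three (resp.\ four) complementary regions correspond bijectively to the arcs cut off on $\uc$. Once this is in place the counting is elementary; the only remaining care is to treat the cyclic wraparound uniformly in part (1) and, in part (2), to account correctly for $T_i$'s own endpoint $p_1$ when passing from the arc $(x_i,p_1)$ to the larger arc $(x_i,p_2)$.
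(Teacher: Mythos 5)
Your proposal is correct and takes essentially the same route as the paper's proof: assume the reversed order, use disjointness to force each remaining arc (hence a balanced set of $x$-, $a$-, $b$-points) into a single complementary region of $A_i$, and contradict this with the direct count showing one extra $a$-point in $(x_i,b_{m(i)})$ --- which is exactly the paper's count, with the separation and permutation bookkeeping (which the paper dismisses as ``clearly'') spelled out. The only cosmetic difference is in part (2), where the paper simply applies part (1) to pairs of legs of the triods, while you rerun the counting argument directly on the four complementary regions; both work.
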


\begin{proof}
(1) Let $x_1<b_{m(1)}<a_{r(1)}$. Then the arc $(x_1, b_{m(1)})$
contains $m(1)-1$ points $x_2,$ $\dots,$ $x_{m(1)}$, $m(1)-1$ points
$b_1,$ $\dots,$ $b_{m(1)-1}$, but $m(1)$ points $a_1,$ $\dots,$
$a_{m(1)}$. Clearly, this contradicts the existence of sets $A_j$.

(2) Follows from (1) applied to parts of the triods $T_i$.
\end{proof}

We will mostly apply the following corollary of Lemma~\ref{posor}.

\begin{cor}\label{cor-posor1}
Let $\lam$ be a sibling $d$-invariant lamination and $T\su
\lam^*$ be an arc consisting of two leaves with a common endpoint
$v$ or a triod consisting of three leaves with a common endpoint
$v$. Suppose that $S\su \lam^*$ is an arc (triod) such that
$\si^*_d(S)=T$ and $\si^*_d|_S$ is one-to-one. Then the circular
orientation of the sets $B(T)$ and $B(S)$ is the same.
\end{cor}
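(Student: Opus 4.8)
The plan is to deduce the statement from Lemma~\ref{posor} by exhibiting a full collection of $d$ pairwise disjoint sibling arcs (resp. triods) and then reading off the circular order of their endpoints. I describe the arc case; the triod case is identical with Lemma~\ref{posor}(2) replacing Lemma~\ref{posor}(1). First I would fix notation. Since $\si^*_d|_S$ is a one-to-one, hence homeomorphic, map of the arc $S$ onto $T=\overline{av}\cup\overline{vb}$, it carries $S\cap\uc=B(S)$ bijectively onto $T\cap\uc=B(T)=\{a,v,b\}$; as $v$ is the only point of $\uc$ interior to the arc $T$, the corner of $S$ must map to $v$, so $S=\overline{\hat a\hat v}\cup\overline{\hat b\hat v}$ with $\si_d(\hat v)=v$, $\si_d(\hat a)=a$, $\si_d(\hat b)=b$, and $a,v,b$ pairwise distinct (the two edges of $T$ are genuine, noncoinciding leaves). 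Reorienting if necessary I may assume $v<a<b$ positively on $\uc$; the goal then becomes to show $\hat v<\hat a<\hat b$, for this is exactly the assertion that $B(S)$ and $B(T)$ carry the same circular orientation.

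Next I would build the sibling arcs. Applying Definition~\ref{dfn-invariant}(3) to the leaf $\overline{\hat v\hat a}$, which maps onto the nondegenerate leaf $\overline{va}$, yields $d$ pairwise disjoint leaves $L_1,\dots,L_d$ with $L_1=\overline{\hat v\hat a}$, each mapping onto $\overline{va}$; applied to $\overline{\hat v\hat b}$ it yields $d$ pairwise disjoint leaves $M_1,\dots,M_d$ with $M_1=\overline{\hat v\hat b}$, each mapping onto $\overline{vb}$. Each $L_i$ joins a $v$-point to an $a$-point and each $M_j$ joins a $v$-point to a $b$-point. Since the $L_i$ are pairwise disjoint their $v$-endpoints exhaust the $d$ distinct $v$-points, and likewise for the $M_j$; hence each $v$-point $w$ is an endpoint of exactly one $L$ and exactly one $M$, which I concatenate into an arc $A_w$. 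Because two leaves of $\lam$ meet at most in a common endpoint on $\uc$, and because the $v$-, $a$-, and $b$-points are three disjoint point-types, an $L$ and an $M$ based at different $v$-points are disjoint; consequently the arcs $\{A_w\}$ are pairwise disjoint, and $S=A_{\hat v}$ is one of them.

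Finally I would establish the cyclic pattern needed to invoke the lemma. The $3d$ preimages of $\{v,a,b\}$ partition $\uc$ into $3d$ complementary arcs, each of length less than $1/d$, and each therefore mapped by $\si_d$ homeomorphically and order-preservingly onto one of the three arcs $(v,a)$, $(a,b)$, $(b,v)$. Reading the preimage points around $\uc$ positively, their $\si_d$-images thus cycle through $v,a,b$ in order, so the points arrange themselves as $x_1<a_1<b_1<x_2<\cdots<x_d<a_d<b_d<x_1$ with the $x$'s the $v$-points, the $a$'s the $a$-points, and the $b$'s the $b$-points. This places the pairwise disjoint arcs $A_{x_i}$ precisely in the hypothesis of Lemma~\ref{posor}(1) with $n=d$, yielding $x_i<a_{r(i)}<b_{m(i)}$ for every $i$; in particular the block containing $\hat v$ gives $\hat v<\hat a<\hat b$, as required.

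The main obstacle is the combinatorial bookkeeping of the middle step: one must check both that pairing the $L$'s with the $M$'s by common $v$-point is a well-defined bijection and that the resulting arcs are genuinely pairwise disjoint. The disjointness is the crux, and it is exactly here that the lamination hypothesis (leaves meet at most in a circle point) is indispensable, since Definition~\ref{dfn-invariant}(3) by itself only supplies disjoint preimages of a \emph{single} leaf and says nothing about how the two sibling families interact. The interleaving pattern of preimages, though routine from the local order-preservation of $\si_d$, is the other point requiring attention, as it is precisely what matches our configuration to Lemma~\ref{posor}.
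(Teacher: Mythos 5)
Your proposal is correct and follows essentially the same route as the paper's own proof: take the full sibling collections of the two (three) leaves of $S$, observe that they attach to the $d$ preimages of $v$ to form $d$ pairwise disjoint arcs (triods) containing $S$, note the interleaved circular pattern of the preimages of $B(T)$, and invoke Lemma~\ref{posor}. The only difference is one of detail: you spell out explicitly the pairing of siblings by common $v$-point and the disjointness verification, which the paper compresses into a single parenthetical remark.
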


\begin{proof}
Let the endpoints of $T$ be $a, b$ ($a, b, c$ if $T$ is a triod). Then
the set of all preimages of points of $B(T)$ consists of $d$ triples
(if $T$ is an arc) or quadruples (if $T$ is a triod) of points denoted
by $B_1, \dots, B_d$ and such that (1) each $B_i$ is contained in a
circle arc $J_i$ so that these arcs are disjoint, (2) the circular
order of points in $B_i$ is the same as the circular order of
$\si_d$-images of these points.

Take the leaves which comprise $T$ (two leaves if $T$ is an arc and
three leaves if $T$ is a triod). Consider the corresponding leaves
comprising $S$. Each leaf of $T$ gives rise to its full sibling
collection (here we use the fact that $\lam$ is sibling invariant).
Then leaves from those collections ``grow'' out of points $v_1, \dots,
v_d$ which are preimages of $v$. This gives rise to $d$ unlinked sets
$S_1, \dots, S_d$ where $S_i$ is a union of two (three) leaves growing
out of $v_i$ (indeed, no leaf of $S_i$ can coincide with a leaf of
$S_j$ where $i\ne j$ while distinct leaves must be disjoint by the
properties of laminations). Moreover, we may assume that $S_1=S$. It
now follows from Lemma~\ref{posor} and the above paragraph that all the
sets $B(S_i)$ of endpoints of $S_i$ united with $x_i$ have the same
circular orientation coinciding with the circular orientation of the
set of their $\si_d$-images, i.e. the set $B(T)$.
\end{proof}

Corollary~\ref{cor-posor1} shows that all pullbacks of certain sets have
the same orientation as the sets themselves. However it also allows us
to study images of some sets. Indeed, by Corollary~\ref{cor-posor1}, if
$S\su \lam^*$ is a triod mapped by $\si^*_d$ one-to-one into $T$
then the central edge of $S$ maps into the central edge of $T$.

Lemmas~\ref{posor} and Corollary~\ref{cor-posor1} are useful in
comparing the orientation of arcs (triods) and their images \emph{in
the absence of critical leaves}. In the case when there are critical
leaves in the arcs and triods we need additional lemmas. In what follows
by a \emph{preimage collection (of a chord $\ol{ab}$}) we mean a
collection $A$ of several \emph{pairwise disjoint} chords with the
same \emph{non-degenerate} image-chord $\ol{ab}$; here we do not
necessarily assume the existence of a lamination. However if we deal
with a lamination $\lam$ then we always assume that preimage
collections consist of leaves of $\lam$ and often call them
\emph{preimage leaf collections}. If there are $d$ disjoint chords
in $A$ we call it \emph{full}. If $X$ is a preimage collection of a
chord $\ol{ab}$, we denote the endpoints of chords of $X$ by the same letters
as for the endpoints of their images but with a hat and distinct
subscripts, and call them correspondingly ($a$-points, $b$-points
etc). Finally, recall that $\partial(X)$ is the union of all
endpoints of chords from $X$.


\begin{lem} \label{loca}
Let $X$ be a full preimage collection of a chord $\ol{ab}$ and
$\ol{\ha_1\hb_1}$, $\ol{\ha_2\hb_2}$ be two chords from $X$. Then
the number of chords from $X$ crossing the chord $\ol{\ha_1\ha_2}$
inside $\disk$ is even if and only if either $\ha_1<\hb_1<\ha_2<\hb_2$ or
$\ha_1<\hb_2<\ha_2<\hb_1$. In particular, if there exists a concatenation
$Q$ of chords connecting $\ha_1$ and $\ha_2$, disjoint  with chords of
$X$ except the points  $\ha_1, \ha_2$, then either $\ha_1<\hb_1<\ha_2<\hb_2$ or
$\ha_1<\hb_2<\ha_2<\hb_1$.
\end{lem}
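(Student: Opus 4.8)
The plan is to reduce everything to a parity count on the circle. Since the image chord $\ol{ab}$ is non-degenerate we have $a\neq b$, so $\si_d$ has exactly $d$ preimages of $a$ and $d$ of $b$, all $2d$ of them distinct and spaced uniformly at distance $1/d$; hence the $a$-points and $b$-points strictly alternate around $\uc$ (the balanced-count content of Proposition~\ref{abcount}). Because $X$ is \emph{full}, its $d$ pairwise disjoint chords form a non-crossing perfect matching pairing each $a$-point with a $b$-point. I would fix the chord $c=\ol{\ha_1\ha_2}$ and the open arc $I=(\ha_1,\ha_2)_\uc$. By strict alternation, and since both endpoints of $I$ are $a$-points, $I$ contains exactly one more $b$-point than $a$-point; write $m$ for the number of $a$-points in $I$ and $m+1$ for the number of $b$-points in $I$.

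Next I would classify the chords of $X$ by the location of their endpoints. Let $p$ be the number with both endpoints in $I$; let $q$ (resp.\ $r$) be the number with $a$-endpoint in $I$ and $b$-endpoint in the complementary arc $I^c$ (resp.\ $b$-endpoint in $I$, $a$-endpoint in $I^c$); and let $s\in\{0,1,2\}$ be the number of the two special chords $\ell_1=\ol{\ha_1\hb_1}$, $\ell_2=\ol{\ha_2\hb_2}$ whose $b$-endpoint lies in $I$. Counting $a$- and $b$-points of $I$ gives $p+q=m$ and $p+r+s=m+1$. A chord of $X$ crosses $c$ inside $\disk$ exactly when its endpoints are separated by $\{\ha_1,\ha_2\}$, i.e.\ precisely for the $q+r$ chords of types $q,r$ (the chords $\ell_1,\ell_2$ meet $c$ only at a boundary vertex, so they do not cross). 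Subtracting the two identities yields $r-q=1-s$, whence the crossing number is $N=q+r=2q+1-s$, so $N\equiv 1+s \pmod 2$.

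Therefore $N$ is even iff $s=1$, i.e.\ iff exactly one of $\hb_1,\hb_2$ lies in $I$. Since $\ell_1,\ell_2\in X$ are disjoint, the four points $\ha_1,\ha_2,\hb_1,\hb_2$ are not in linked (crossing) position, and checking the remaining cyclic arrangements shows that ``exactly one of $\hb_1,\hb_2$ in $I$'' is precisely the disjunction $\ha_1<\hb_1<\ha_2<\hb_2$ or $\ha_1<\hb_2<\ha_2<\hb_1$. This establishes the main equivalence.

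For the ``in particular'' clause I would argue that the concatenation $Q$ forces $N=0$. As $X$ is a matching, $\ha_1,\ha_2$ are endpoints only of $\ell_1,\ell_2$, so every $\ell\in X\setminus\{\ell_1,\ell_2\}$ is disjoint from $Q$. The arc $Q$ and the chord $c$ share the endpoints $\ha_1,\ha_2$, and $\ol\disk$ is convex, so $Q$ is homotopic to $c$ rel $\{\ha_1,\ha_2\}$ in $\ol\disk$; since the endpoints of $\ell$ are distinct from $\ha_1,\ha_2$, the mod-$2$ intersection number of $\ell$ with any arc from $\ha_1$ to $\ha_2$ is a homotopy invariant, so $\ell$ crosses $c$ with the same parity as it crosses $Q$, namely $0$; as two chords cross at most once, $\ell$ does not cross $c$, giving $N=0$, even. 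I expect the main obstacle to be twofold: keeping the circular bookkeeping airtight (correctly excluding $\ell_1,\ell_2$ from the count and matching $s=1$ to exactly the two listed cyclic orders), and justifying the mod-$2$ invariance for $Q$, which may touch $\uc$ at intermediate vertices — here one uses that those vertices are not endpoints of chords of $X$ (again because $Q$ meets $X$ only at $\ha_1,\ha_2$), so the homotopy never drags an endpoint across a chord.
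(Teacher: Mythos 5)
Your proof is correct, and it is organized differently from the paper's. Both arguments rest on the same two ingredients --- the strict alternation of $a$-points and $b$-points around $\uc$ (the content of Proposition~\ref{abcount}) and the fact that a full preimage collection is a non-crossing perfect matching of them --- but the paper proves the two implications separately: the forward direction (``unlinked implies even'') by a direct count of endpoints in the arc $(\hb_1,\ha_2)$, and the converse by contradiction, introducing the auxiliary mod-$2$ function $\vp$ and deleting the crossing chords to arrive at an impossible parity. Your classification of the chords into the types $p,q,r,s$, together with the two identities $p+q=m$ and $p+r+s=m+1$, yields the single exact formula $N=2q+1-s$, from which the parity of $N$ is read off and both directions of the equivalence follow at once; this unification is what your approach buys, as it replaces the paper's two-case structure (including the proof by contradiction) with one computation, while using exactly the same underlying tools. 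For the ``in particular'' clause, the paper simply asserts that no chord of $X$ can cross $\ol{\ha_1\ha_2}$ and invokes the first part; the implicit reason is that such a chord would separate $\ha_1$ from $\ha_2$ in $\ol{\disk}$, which is impossible because $Q$ is a connected set joining these two points and disjoint from every chord of $X$ other than $\ol{\ha_1\hb_1}$ and $\ol{\ha_2\hb_2}$. Your mod-$2$ homotopy-invariance argument establishes the same fact and is valid --- the homotopy can be carried out in $\ol{\disk}$ minus the two endpoints of the chord in question, which is star-shaped with respect to the center and hence simply connected, so the parity of intersection is indeed invariant --- but it is heavier machinery than the one-line connectedness/separation observation requires.
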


The fact that either $\ha_1<\hb_1<\ha_2<\hb_2$ or
$\ha_1<\hb_2<\ha_2<\hb_1$ is equivalent to the fact that
$\ol{\ha_1\ha_2}$ separates $\ol{\ha_1\hb_1}\sm \{\ha_1\}$ from
$\ol{\ha_2\hb_2}\sm \{\ha_2\}$ in $\disk$.

\begin{proof}
See Figure~\ref{sibs}. First let us show that if, say,
$\ha_1<\hb_1<\ha_2<\hb_2$ then the number of chords from $X$
crossing the chord $\ol{\ha_1\ha_2}$ inside $\disk$ is even. Indeed,
by Proposition~\ref{abcount} there are, say, $k$ $a$-points and $k$
$b$-points in $(\hb_1, \ha_2)$. Suppose that among chords of $X$
there are $m$ chords with both endpoints in $(\hb_1, \ha_2)$. Then
there are $2k-2m$ $a$- and $b$-points in $(\hb_1, \ha_2)$ which are
exactly all the endpoints of chords from $X$ which cross
$\ol{\ha_1\ha_2}$. inside $\disk$. This implies that the number of
chords from $X$ crossing the chord $\ol{\ha_1\ha_2}$ inside $\disk$
is even.

On the other hand, suppose that the number of chords from $X$
crossing the chord $\ol{\ha_1\ha_2}$ inside $\disk$ is even. As
before, for definiteness assume that $\ha_1<\hb_1<\hb_2<\ha_1$. For
any $Z\subset X$ consider a function $\vp(I, Z)$ of an arc $I\subset
\uc$, defined as the difference between the number of $a$-points in
$Z\cap I$ and the number of $b$-points in $Z\cap I$ taken modulo
$2$. Clearly, for some $k$ the arc $(\hb_1, \hb_2)$ contains $k$\,
$b$-points and $k+1$ $a$-points; similarly, for some $l$ the arc
$(\ha_2, \ha_1)$ contains $l$ $a$-points and $l+1$ $b$-points.
Hence, $\vp((\hb_1, \hb_2), X)=\vp((\ha_2, \ha_1), X)=1$.

\begin{figure}[h!]
\centering\def\svgwidth{.61\columnwidth}\fbox{\fbox{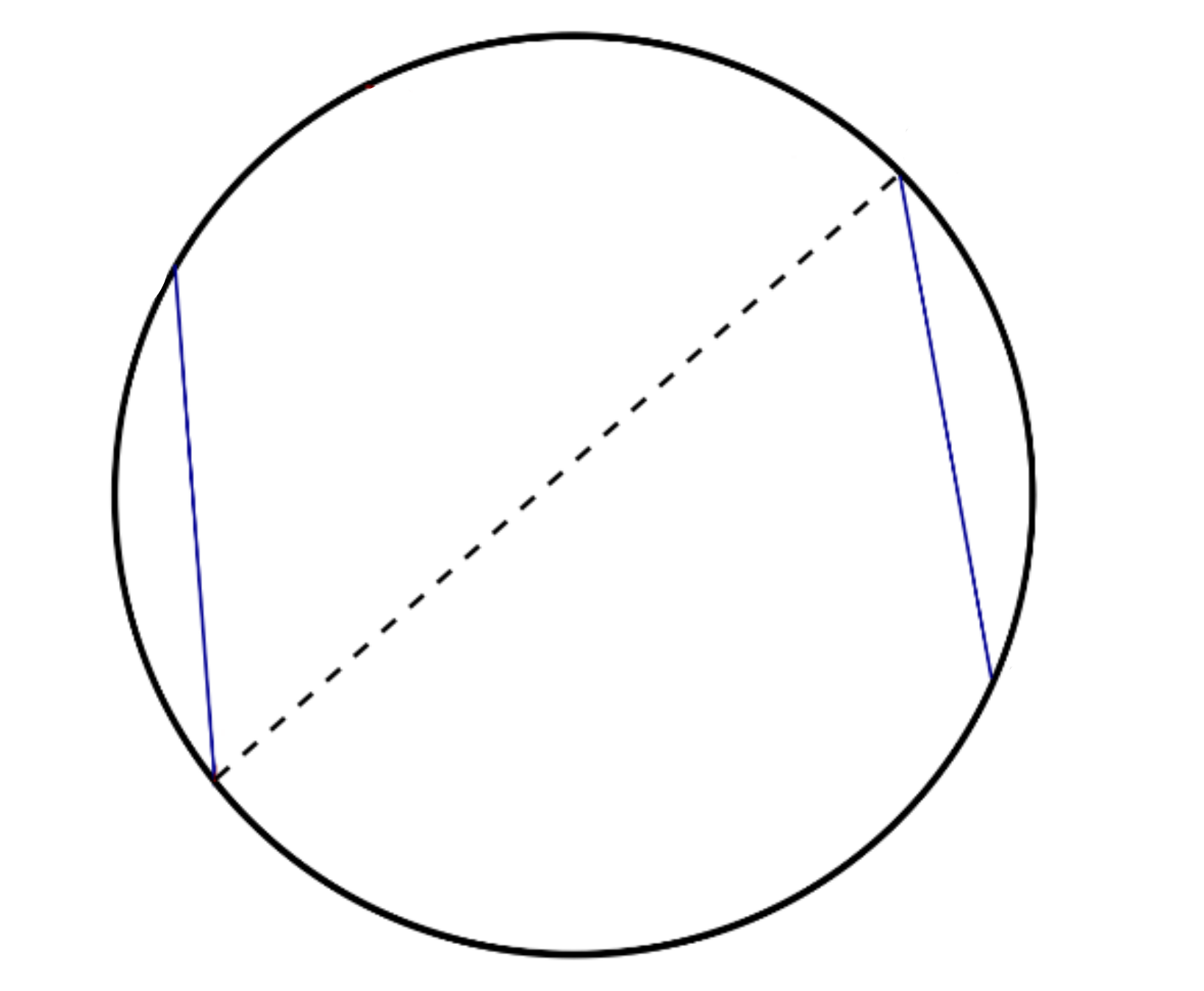}}
\caption{Siblings and critical leaves} Siblings must be on opposite
sides of the chord $\ol{\ha_1\ha_2}$ which is not crossed by leaves from $X$.
\label{sibs}
\end{figure}

Remove the chords from $X$ connecting the arcs $(\hb_1, \hb_2)$ and $(
\ha_2,\ha_1)$ from $X$ to get a new set of chords $Y$. As we
remove one such chord, we increase the value of $\vp$ on $(\hb_1,
\hb_2)$ by $1$, and we increase the value of $\vp$ on $(\ha_2, \ha_1)$
by $1$ as well. By the assumption, to get the set $Y$ we need to remove
an even number of chords. Thus, we see that $\vp((\hb_1, \hb_2),
Y)=\vp((\ha_2, \ha_1), Y)=1$. However, the remaining points of
$\partial(Y)\cap (\hb_1, \hb_2)$ are endpoints of a certain number of
chords from $X$ and hence there must be an equal number of $a$-points
and $b$-points among them, a contradiction.

If there exists a concatenation $Q$ of chords connecting $\ha_1$ to $\ha_2$ so
that $Q\cap X=\{\ha_1,\ha_2\}$ then no chord of $X$ can cross the chord
$\ol{\ha_1\ha_2}$ inside $\disk$. Hence the desired result follows from the first part.
\end{proof}

To prove Lemma~\ref{triorder} we need more definitions.

\begin{dfn}\label{monarc}
Let $I\su \lam^*$ be an arc (the image of a homeomorphism
$\vp:[0, 1]\to I$). We call $I$ a \emph{monotone} arc if its
endpoints $\vp(0), \vp(1)$ belong to $\uc$ and there is a circle arc
$T=[\vp(0), \vp(1)]_\uc$ which contains $\partial(I)$ (this implies
that the map $\vp^{-1}|_{\partial(I)}$ is monotone with respect to
the circular order on $T$). Likewise, a triod $T\su \lam^*$  is
\emph{monotone} if all its edges are monotone arcs.
\end{dfn}

As an example of a monotone arc one can consider a single leaf of
$\lam$ or a subarc of the  boundary  of a gap of $\lam$.

We are ready to prove the following lemma.

\begin{lem}\label{triorder}
Let $\lam$ be a sibling $d$-invariant lamination. Suppose that
$\si^*_d$ monotonically maps a monotone arc $I$ onto the union of
the two sides of a monotone triod $T$ with vertex $v$ whose central
edge is a leaf $\ol{vm}$. Then there exists $\hat v\in \partial(I)$ such
that $\si_d(\hat v)=v$ and there exists a leaf $Q=\ol{\hat v\hat m}$ such that
$I\cup Q$ is a monotone triod with vertex $u$ and central edge $Q$.
\end{lem}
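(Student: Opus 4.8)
The plan is to produce the desired triod in three moves: locate a preimage $\hat v\in\partial(I)$ of the vertex $v$ sitting \emph{inside} $I$, produce the lift $Q$ of the central edge $\ol{vm}$ using sibling invariance, and then verify that $I\cup Q$ is a triod whose central edge is $Q$. The monotonicity of this triod will come for free: its three edges are the single leaf $Q$ together with the two subarcs of $I$ that $\hat v$ cuts off, and subarcs of a monotone arc are again monotone arcs (Definition~\ref{monarc}). So the real work is in the combinatorial placement of $Q$.

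First I would analyse the fiber of $\si^*_d|_I$ over $v$. Since $\si^*_d|_I$ is monotone, $J=(\si^*_d|_I)^{-1}(v)$ is a subarc of $I$; and because $v$ is the interior point of the arc $S_1\cup S_2$ where the two sides of $T$ meet (its endpoints being the nondegenerate free ends of the sides, which differ from $v$), the arc $J$ lies in the interior of $I$. As $J\su\lam^*$ is sent to the single point $v\in\uc$, and the only points of $\lam^*$ that $\si^*_d$ maps into $\uc$ are points of $\uc$ and points of critical leaves, $J$ is either a single $v$-point or a concatenation of critical leaves joining $v$-points; in either case $J$ contains a $v$-point, which I take to be $\hat v$. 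Next I would produce $Q$: the central edge $\ol{vm}$ is a nondegenerate leaf, so by Definition~\ref{dfn-invariant}(2)--(3) it has a full sibling collection of $d$ pairwise disjoint preimage leaves $\ol{\hat v_i\hat m_i}$. Being disjoint, their $v$-endpoints $\hat v_i$ are distinct, and since $\si_d$ is exactly $d$-to-one on $\uc$ they exhaust all $v$-points; hence the chosen $\hat v$ is the $v$-endpoint of exactly one sibling leaf $Q=\ol{\hat v\hat m}$, the candidate central edge.

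The crux is to show that $Q$ meets $I$ only at $\hat v$ and separates the two free endpoints $p,q$ of $I$, which makes $I\cup Q$ a triod with central edge $Q$ (the vertex $u$ of the statement being $\hat v$). When $J$ is a single point, $\hat v$ is a genuine interior vertex of $I$ flanked by two leaves $\ell_1,\ell_2$ of $I$ mapping onto the initial (nondegenerate) leaves of $S_1$ and $S_2$. Then $\ell_1\cup\ell_2\cup Q$ is a three-leaf triod carrying no critical leaf, its $\si^*_d$-image is a subtriod of $T$ whose central edge is $\ol{vm}$ (each side lies on one side of the chord $\ol{vm}$, so $\ol{vm}$ separates the images of the far ends of $\ell_1,\ell_2$), and since $\si^*_d$ is one-to-one on it, Corollary~\ref{cor-posor1} forces the edge mapping onto $\ol{vm}$, namely $Q$, to be central. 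Non-crossing of leaves then carries this separation outward from $\ell_1,\ell_2$ to $p,q$ and pins $\hat m$ into the arc of $\uc$ complementary to $[p,q]_\uc$, whence $Q\cap I=\{\hat v\}$.

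The hard case, and the main obstacle, is when $J$ is a nondegenerate concatenation of critical leaves, i.e. $v$ is a critical value whose critical leaves lie inside $I$. Then $\si^*_d|_{I\cup Q}$ collapses $J$ and is no longer one-to-one, so Corollary~\ref{cor-posor1} is unavailable; this is exactly the configuration that Lemma~\ref{loca} was designed to control. Here I would take $\hat v$ to be the extreme $v$-point of $J$ abutting $I_1$ (resp.\ $I_2$) and apply the parity count of Lemma~\ref{loca} to the full sibling collection of $\ol{vm}$: the number of siblings crossing the chord joining the two extreme $v$-points of $J$ determines on which side of $Q$ the endpoint $\hat m$ lies, and hence whether $\hat m$ falls on the correct side to separate $p$ from $q$ while avoiding $\partial(I)$. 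Reconciling the two ends of $J$ and checking that this choice of $\hat v$ always yields a genuine triod with $Q$ central is the delicate point, and it is precisely what the parity argument of Lemma~\ref{loca} is meant to settle.
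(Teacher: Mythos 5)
Your first two moves (locating a $v$-point in the fiber $(\si^*_d|_I)^{-1}(v)$, and extracting from a full sibling collection of $\ol{vm}$ the unique leaf $Q=\ol{\hat v\hat m}$ with endpoint $\hat v$) are fine, but the proof then stops exactly where the lemma begins. Showing that $\hat m$ lands in the circle arc complementary to the one containing $\partial(I)$ --- equivalently, that $Q$ separates the endpoints of $I$ and is therefore central --- is the entire content of the statement, and your final paragraph replaces this by the announcement that ``the parity argument of Lemma~\ref{loca} is meant to settle'' it. That is a plan, not an argument, and you concede as much. Worse, the underlying strategy of \emph{fixing} $\hat v$ first and then verifying that its sibling leaf is central cannot succeed as stated: a $v$-point of the fiber may, for all the definitions say, be joined by its sibling leaf to an $m$-point lying on the \emph{same} side as $\partial(I)$, inside a ``pocket'' between $I$ and the circle (there can be up to $d-1$ $m$-points on that side), so what must be proved is that \emph{some} $v$-point of $\partial(I)$ works, not that a prescribed one does. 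There is also a gap in your ``easy'' case: when the fiber is a single point, $\hat v$ need not be flanked by two leaves of $I$ (leaves of a monotone arc may accumulate on a point of $\uc$ that is an endpoint of no leaf of the arc), so Corollary~\ref{cor-posor1}, which is stated for concatenations of two or three leaves, does not directly apply.

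The paper's proof inverts your logic and thereby avoids every case distinction. With $\ha,\hb$ the endpoints of $I$ mapping to the free ends $a,b$ of the two sides, and $v\in(a,b)$, consider $\si_d$ on the complementary arc $(\hb,\ha)$: it performs some number of complete one-to-one wraps around $\uc$, followed by a final stretch mapping one-to-one onto $[b,a]$, the arc which contains $m$ (since $\ol{vm}$ is central) but not $v$. Hence $(\hb,\ha)$ contains exactly one more $m$-point than $v$-points. The $d$ pairwise disjoint leaves of a full sibling collection of $\ol{vm}$ match the $d$ $v$-points bijectively with the $d$ $m$-points, so by pigeonhole some leaf $Q$ joins an $m$-point $\hat m\in(\hb,\ha)$ to a $v$-point $\hat v\in(\ha,\hb)$. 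Since $Q$ cannot cross leaves of $I$ yet reaches the far arc, $\hat v$ is forced to lie in $\partial(I)$; and since $\hat m$ lies in the far arc, $Q$ separates $\ha$ from $\hb$, so $I\cup Q$ is a triod with central edge $Q$ (monotone, because subarcs of a monotone arc and single leaves are monotone). The counting produces the correct $\hat v$ and $Q$ simultaneously --- precisely the step your approach leaves open.
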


\begin{proof}
Denote the endpoints of $T$ distinct from $m$ by $a$ and $b$. Then
the endpoints of $I$ map to $a$ and $b$; denote them $\ha$ and
$\hb$, respectively. For the sake of definiteness assume that $v\in
(a, b)$. Consider $\si_d|_{(\hb, \ha)}$. Clearly, as we move from
$\hb$ to $\ha$ we first encounter several semi-open subarcs of
$(\hb, \ha)$ which wrap around the circle in the one-to-one fashion.
Then the last arc which we encounter connects a $b$-point with an
$a$-point and maps onto $[b, a]$ in the one-to-one fashion. Hence
there is one more $m$-point in $(\hb, \ha)$ than $v$-points in
$(\hb, \ha)$. This implies that one $m$-point belonging to $(\hb,
\ha)$ (denote it by $\hat m$) must be connected with a leaf to a
$v$-point belonging to $(\ha, \hb)$ (denote it by $\hat v$). This
completes the proof.
\end{proof}



By a \emph{polygon} we mean a finite convex polygon.
In what follows by a \emph{collapsing polygon} we mean a polygon $P$
with edges which are chords of $\disk$ such that their images are the
same \emph{non-degenerate} chord (thus as we walk along the edges of
$P$, their $\si_d$-images walk back and forth along the same
non-degenerate chord). When we say that $Q$ is a \emph{collapsing
polygon of a lamination $\lam$}, we mean that {\bf all} edges of $Q$
are leaves of $\lam$; we also say that $\lam$ \emph{contains a
collapsing polygon $Q$}. However, this does not necessarily imply that
$Q$ is a gap of $\lam$ as $Q$ might be further subdivided by leaves of
$\lam$ \emph{inside} $Q$.

We often deal with \emph{concatenations of leaves}, i.e. finite
collections of pairwise distinct leaves which, when united, form a
topological arc in $\disk$. The concatenation of leaves
$\ell_1,\dots, \ell_k$ is denoted $\ell_1\cdots\ell_k$. If the
leaves are given by their endpoints $x_1,\dots, x_k$, we denote the
concatenation by $\ol{x_1\cdots x_k}$. We do not assume that points
$x_1, \dots, x_k$ are ordered on the circle; however if they are, we
call $\ol{x_1\cdots x_k}$ an \emph{ordered} concatenation.

\begin{lem}\label{colpol}
Suppose that $\lam$ is a sibling $d$-invariant lamination which
contains two distinct leaves $\ell_0=\ol{vx}$ and $\ell_1=\ol{vy}$
such that $\si_d(\ell_0)=\si_d(\ell_1)=\ell$ is a non-degenerate leaf. Then
$\lam$ contains a collapsing polygon $P$ with edges $\ell_0$ and
$\ell_1$ such that $\si_d(P)=\ell$; also, it contains a critical gap
$G\subset P$ with vertex $v$ such that $\si_d(G)=\ell$.
\end{lem}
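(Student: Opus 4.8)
The plan is to build the collapsing polygon by superimposing two full sibling collections, and then to obtain the critical gap as a sub-gap of that polygon. First I would normalize. Since $\sigma_d(\ell_0)=\sigma_d(\ell_1)=\ell=\ol{ab}$ and $v$ is a common endpoint of $\ell_0,\ell_1$, the point $\sigma_d(v)$ is a common endpoint of $\ell$; relabelling $a,b$ if necessary, assume $\sigma_d(v)=a$, so that $v$ is an $a$-point while $x,y$ are distinct $b$-points (here the $a$-points and $b$-points are the $d$ preimages under $\sigma_d$ of $a$ and of $b$). The overall strategy is then: produce a cycle of leaves mapping to $\ell$ through $v$ via two sibling matchings, recognize this cycle as the boundary of a convex polygon $P$, and finally cut out a critical sub-gap of $P$ touching $v$.

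For the collapsing polygon I would apply Definition~\ref{dfn-invariant}(3) to $\ell_0$ and to $\ell_1$, obtaining two full sibling collections $\mathcal F\ni\ell_0$ and $\mathcal G\ni\ell_1$, each consisting of $d$ pairwise disjoint leaves of $\lam$ mapping onto $\ell$. Every such leaf joins an $a$-point to a $b$-point, and there are exactly $d$ of each, so each of $\mathcal F,\mathcal G$ is a non-crossing perfect matching between the $a$-points and the $b$-points. In $\mathcal F$ the point $v$ is paired with $x$, while in $\mathcal G$ it is paired with $y\ne x$; hence $\ell_0\in\mathcal F\setminus\mathcal G$, $\ell_1\in\mathcal G\setminus\mathcal F$, and in $\mathcal F\cup\mathcal G$ every vertex has degree two. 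Thus $\mathcal F\cup\mathcal G$ is a disjoint union of cycles alternating between $\mathcal F$ and $\mathcal G$, and $v$ lies on an honest cycle $C$ (of length at least $4$) meeting $v$ through the edges $\ell_0$ and $\ell_1$; all edges of $C$ are leaves of $\lam$ mapping to $\ell$. Because these edges are leaves of a lamination they are pairwise non-crossing, and a short induction shows that a cycle of pairwise non-crossing chords visits its vertices in their circular order (a cycle-chord cannot separate the remaining cycle vertices into both of its complementary arcs, as the path would then be forced to cross it). Hence $C$ is exactly $\Bd$ of the convex hull $P$ of its vertices, $P$ is a polygon whose edges are leaves of $\lam$ including $\ell_0$ and $\ell_1$, and since each vertex of $P$ is an $a$- or $b$-point, $\sigma_d(P)=\ch(\{a,b\})=\ell$. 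This is the required collapsing polygon.

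For the critical gap I would pass to a sub-gap of $P$. The only points of $\ol P\cap\uc$ are the vertices of $P$, so any leaf of $\lam$ contained in $\ol P$ joins two vertices of $P$; as $P$ has finitely many vertices there are only finitely many such diagonals, and they cut $P$ into finitely many gaps of $\lam$. Let $G\subseteq P$ be the gap adjacent to the edge $\ell_0$ on the side facing the interior of $P$. Then $\ell_0$ is an edge of $G$, so $v$ is a vertex of $G$ and $G\subset P$. Every vertex of $G$ is a vertex of $P$, hence an $a$-point or a $b$-point, so $\sigma_d(G)=\ch(\{a,b\})=\ell$; in particular $\sigma_d|_{\partial G}$ sends the (at least three) boundary vertices of $G$ into the two-point set $\{a,b\}$ and is therefore not one-to-one, i.e.\ $G$ is critical. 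If $P$ carries no interior leaves, then $G=P$.

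The main obstacle is the purely combinatorial step that a cycle of pairwise non-crossing chords is the boundary of a convex polygon, rather than a self-overlapping configuration; once this is secured, the observation that $P$ and all its sub-gaps have vertices only among the $a$- and $b$-points makes both the collapsing and the criticality assertions immediate. A secondary point needing care is the verification that each full sibling collection is a genuine perfect matching between the $a$-points and the $b$-points, since this is exactly what forces $v$ onto an alternating \emph{cycle} (as opposed to an alternating path) and thereby closes the polygon.
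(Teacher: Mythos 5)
Your proof is correct, but the way you close up the polygon is genuinely different from the paper's. The paper also begins with the two full sibling collections $A_0\ni\ell_0$ and $A_1\ni\ell_1$, but it then proceeds locally and iteratively: it invokes the parity Lemma~\ref{loca} to find a sibling $\ol{yu_1}\in A_0$ and a sibling $\ol{u_0x}\in A_1$ with $u_1,u_0$ in the arc $(y,x)$, chooses such points so that the arc $[u_1,u_0]$ is shortest, then repeats the argument inside that arc (producing $w_1,w_0$ on a still shorter arc, and so on) until the chain of sibling leaves closes up into a polygon; a separate gluing step is needed when $\lam$ has further leaves $\ol{vz}$ mapping to $\ell$ between $\ell_0$ and $\ell_1$ (one forms a polygon for each adjacent pair and unites them, the intermediate leaves becoming diagonals). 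You instead argue globally: each of $\mathcal F,\mathcal G$ is a non-crossing perfect matching between the $d$ $a$-points and the $d$ $b$-points, so their union decomposes into alternating cycles; since $v$ is matched to $x\ne y$, it lies on a cycle of length at least four containing both $\ell_0$ and $\ell_1$, and the planarity fact that a cycle of pairwise non-crossing chords traverses its vertices in circular order (each cycle edge must have all remaining cycle vertices in one complementary arc, hence is a hull edge, and the edge counts then force the cycle to be the whole hull boundary) produces the collapsing polygon in one stroke, with any intermediate leaves $\ol{vz}$ automatically relegated to diagonals. Your route buys brevity and self-containment --- it bypasses Lemma~\ref{loca}, the arc-minimization, and the case analysis over intermediate leaves --- at the price of the two elementary combinatorial facts you correctly flag, both of which do hold. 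The final step, extracting the critical gap as the gap of $\lam$ inside $P$ adjacent to $\ell_0$, all of whose vertices are vertices of $P$ and hence map into $\{a,b\}$, coincides with the paper's own conclusion.
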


\begin{proof}
First assume that $x<v<y$ and that there are no leaves
$\ell'=\ol{vz}\in\lam$ with $y<z<x$ and $\si_d(\ell')=\ell$. Since
$\lam$ is a \emph{sibling $d$-invariant lamination}, we can choose a full
sibling collection $A_0\subset \lam$ of $\ell_0$. By
Lemma~\ref{loca} there exists $u_1\in (y, x)$ such that $\ol{yu_1}\in
A_0$ and $\ell_0$ are siblings. Similarly, there exists a full sibling
collection $A_1\subset \lam$ of $\ell_1$ and a point $u_0\in
(y, x)$ such that $\ol{u_0x}\in A_1$ and $\ell_1$ are siblings.
Since $\ol{yu_1}$ and $\ol{u_0x}$ are disjoint inside $\disk$, then
$y<u_1\le u_0<x$.

Consider all possible choices of points $u_0, u_1$ so that the above
properties hold: $\ol{yu_1}$ and $\ell_0$ are siblings, $\ol{u_0x}$
and $\ell_1$ are siblings, and $y<u_1\le u_0<x$. Observe that now we
do not require that $u_0$ or $u_1$ be obtained as endpoints of
siblings of $\ell_0$ or $\ell_1$ coming from full sibling
collections, but the existence of such collections shows that the
set of the choices is non-empty (see the first paragraph). Choose
$u_0, u_1$ so that the arc $[u_1, u_0]$ is the shortest.

If $u_0=u_1$ then we obtain a collapsing polygon $P=\ch(vyu_0x)$.
Suppose that $u_0\ne u_1$. Then by the construction and by the
choice of $u_0$ and $u_1$ no leaf of $\lam$ which maps onto $\ell$
can cross the chords $\ol{vu_0}, \ol{vu_1}$. By Lemma~\ref{loca}
applied to $A_0$ and $\ell_0\in A_0$, and because of the location of
the points found so far, there exists a sibling $\ol{w_0u_0}\in A_0$
of $\ell_0$ with $w_0\in (u_1, u_0)$. Similarly, there exists a
sibling $\ol{u_1w_1}\in A_1$ of $\ell_1$ with $w_1\in (u_1, u_0)$.
Since leaves $\ol{w_0u_0}$ and $\ol{u_1w_1}$ do not intersect inside
$\disk$, we see that $u_1<w_1\le w_0<u_0$. Similar to what we did
before, we can choose $w_1$ and $w_0$ so that $\ol{w_0u_0}$ is a
sibling of $\ell_0$, $\ol{w_1u_1}$ is a sibling of $\ell_1$, and the
arc $(w_1w_0)$ is the shortest possible. We can continue in this
manner and obtain a collapsing polygon $P$ with edges $\ell_0$ and
$\ell_1$.

Now, suppose that there are leaves $\ell'$ between $\ell_0$ and
$\ell_1$ with $\si_d(\ell')=\ell$. Let $K$ be the collection of
\emph{all} such leaves $\ell'$ together with $\ell_0$ and $\ell_1$.
By the above we can form collapsing polygons for each pair of
adjacent leaves from $K$. If we unite them and erase in that union
all leaves of $K$ except for $\ell_0$ and $\ell_1$, we will get a
collapsing polygon $P$ with edges $\ell_0$ and $\ell_1$ (leaves of
$K$ are diagonals of $P$). This proves the main claim of the lemma.
Let $G$ be any gap of $\lam$ contained in $P$ and with edge
$\ol{vx}$. Then $\si^*(G)=\si_d(\ol{vx})$ and, hence, $G$ is
critical.
\end{proof}

We need the following definition.

\begin{dfn}
Given a leaf $\ell = \overline{xy}$, we define the corresponding
{\em open leaf} to be $\ell^{\circ} = \ell \setminus \{x,y\}$. For a
lamination $\lam$, denote its critical leaves by
$\bc_i(\lam)=\bc_i$. Below we often consider the set $\cup_i
\bc^{\circ}_i$ which is the union of \emph{all}  open
critical leaves of $\lam$.
\end{dfn}





\begin{figure}[h!]
\centering\fbox{\fbox{\includegraphics[scale = 0.70]
{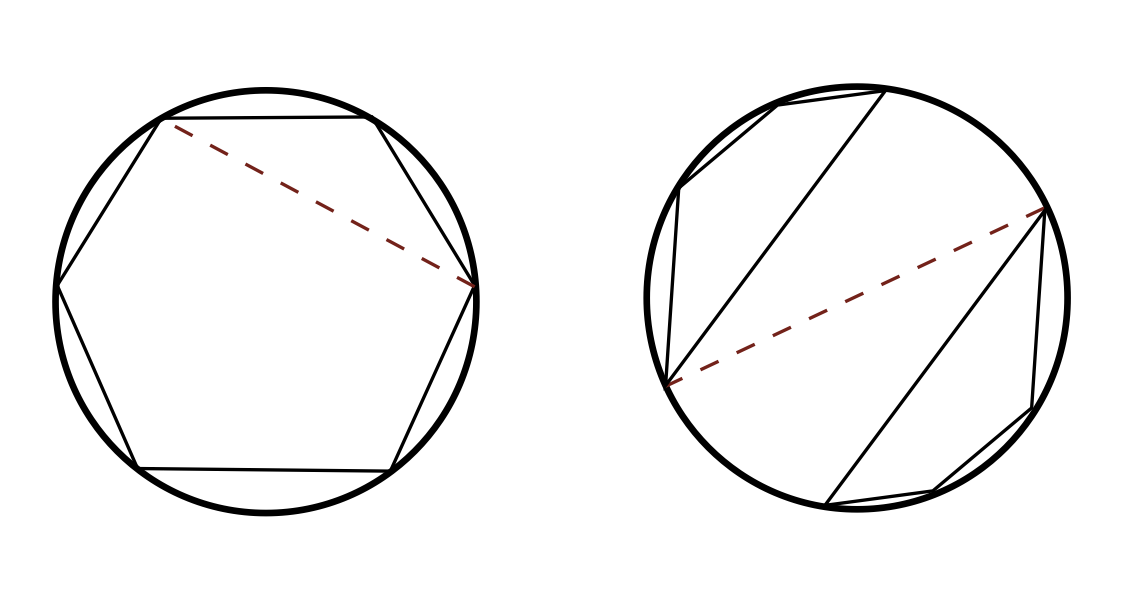}}} \caption{Placements of critical leaves} In each
picture, the critical leaf is denoted by a dashed line. Notice that
in the first example, removing the open critical leaf does not
disconnect the polygon, while in the second example, removing the
open critical leaf disconnects a previously connected set, resulting
in two components.
\end{figure}


\begin{lem}\label{collapse}
Let $\lam$ be a sibling $d$-invariant lamination
and $\ell = \overline{ab} \in \mathcal{L}$ be a leaf.
If $C$ is a component of $\{(\sigma_d^{\ast})^{-1}(\ell)\setminus
\cup_i \bc^{\circ}_i\}$ and $G$ is the convex hull of $\partial(C)$,
then $G$ is a leaf or a collapsing polygon of $\lam$.
\end{lem}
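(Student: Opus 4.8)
The plan is first to pin down the set $\Sigma:=(\si_d^*)^{-1}(\ell)\setminus\bigcup_i\bc_i^\circ$. Since $\ell=\ol{ab}$ is non-degenerate and $\ell\cap\uc=\{a,b\}$, any point of $\Sigma\cap\uc$ is sent by $\si_d$ into $\{a,b\}$, so $\Sigma\cap\uc$ consists exactly of the $a$-points and $b$-points. Off the circle, a point interior to a \emph{non-critical} leaf $\ell'$ can lie in $(\si_d^*)^{-1}(\ell)$ only if $\si_d(\ell')=\ell$: otherwise $\si_d(\ell')$ is a different leaf meeting $\ell$ at most in an endpoint on $\uc$, whereas $\si_d^*$ carries the interior of $\ell'$ into the interior of $\si_d(\ell')$. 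Thus the only non-degenerate leaves meeting $\Sigma$ in interior points are the preimage leaves of $\ell$, each of which is non-critical and is mapped homeomorphically onto $\ell$. The interiors of the critical leaves that collapse to $a$ or $b$ are precisely what is removed when we delete $\bigcup_i\bc_i^\circ$. Hence $\Sigma$ is the union of all preimage leaves of $\ell$ together with all $a$- and $b$-points, and a component $C$ is a connected union of \emph{finitely many} preimage leaves of $\ell$ (finiteness is immediate, as these leaves are pairwise non-crossing chords joining the $d$ $a$-points to the $d$ $b$-points) together with its endpoints $\partial(C)\subseteq\{a\text{-points}\}\cup\{b\text{-points}\}$.

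If $\partial(C)$ is a single point, then $G$ is that point (a degenerate leaf); if $C$ is a single leaf, then $G=C$; in both cases we are done. So assume $C$ contains at least two leaves, and aim to show that every edge of $G=\ch(\partial(C))$ is a leaf of $\lam$ with image the non-degenerate chord $\ell$, so that $G$ is a collapsing polygon. The central tool is Lemma~\ref{colpol}: whenever $C$ contains two leaves $\ol{vx},\ol{vy}$ sharing an endpoint $v$ (and both map onto $\ell$, which holds automatically here), $\lam$ contains a collapsing polygon $P$ with edges $\ol{vx},\ol{vy}$ and $\si_d(P)=\ell$. Since $\Bd(P)$ is connected and meets $C$ in the leaf $\ol{vx}$, maximality of the component $C$ forces $\Bd(P)\subseteq C$; in particular every vertex of $P$ lies in $\partial(C)$ and every edge of $P$ is a leaf mapping onto $\ell$.

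I would then induct on the number of leaves of $C$. Since $C$ is connected with at least two leaves, some vertex is shared by two leaves; let $P\subseteq C$ be the corresponding collapsing polygon. The remaining leaves of $C$ sit in the regions cut off by the edges of $P$: across an edge $\ol{st}$ they form a connected ``decoration'' inside the lune bounded by the chord $\ol{st}$ and the subtended arc $(s,t)_\uc$, attached to the rest of $C$ only through $s$ and $t$. Each decoration, together with $\ol{st}$, is again a connected union of preimage leaves of $\ell$ which inherits the same closure under Lemma~\ref{colpol} (any collapsing polygon built inside the lune stays in the lune by non-crossing and in $C$ by maximality), and it has strictly fewer leaves than $C$. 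By the inductive hypothesis its convex hull is a leaf or a collapsing polygon whose boundary replaces the edge $\ol{st}$ by a chain of leaves mapping onto $\ell$ over the arc $(s,t)_\uc$. Gluing $P$ with these sub-polygons along the edges $\ol{st}$ turns the decorated edges into interior diagonals and exhibits $\Bd(G)$ as a concatenation of leaves all mapping onto $\ell$; hence $G$ is a collapsing polygon built from leaves of $\lam$.

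The hard part will be the planar bookkeeping in the inductive step: verifying that the colpol polygon produced at a shared vertex really lands in the intended lune (so the decorations are genuinely separated by the edges of $P$), that ``decoration plus edge'' is a legitimate, strictly smaller instance, and—most importantly—that the circular pattern of $a$- and $b$-points forces each boundary edge of $G$ to join an $a$-point to a $b$-point. This last point is what rules out the dangerous configuration in which two like-typed vertices are adjacent on $\Bd(G)$: such an edge would be a \emph{critical} chord collapsing to $a$ or $b$, exactly the type of open leaf deleted in passing to $\Sigma$, so its presence would already have disconnected $C$ there. Controlling these types is where Proposition~\ref{abcount} and Lemma~\ref{loca} (through their role inside Lemma~\ref{colpol}) carry the weight, and getting the lune-by-lune induction to respect them cleanly is the main obstacle.
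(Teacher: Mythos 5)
Your reduction of the preimage set (its non-degenerate part is exactly the union of the preimage leaves of $\ell$, since critical leaves collapse to points and interiors of distinct leaves are disjoint) is correct, and so is the key observation that the boundary of any collapsing polygon supplied by Lemma~\ref{colpol} is a connected subset of $(\si_d^*)^{-1}(\ell)\setminus\bigcup_i\bc^{\circ}_i$ meeting $C$, hence lies in $C$. But the inductive scaffolding has a genuine hole, and it sits exactly at the steps you defer as ``the hard part''. First, the induction cannot invoke the lemma being proved: a decoration together with the edge $\ol{st}$ is \emph{not} a component of $(\si_d^*)^{-1}(\ell)\setminus\bigcup_i\bc^{\circ}_i$, so the inductive hypothesis does not apply to it; you would need a strengthened statement about connected unions of preimage leaves satisfying some closure property, and you never formulate one. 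Second, the closure property you propose to carry along is false as stated: Lemma~\ref{colpol} asserts the existence of \emph{some} collapsing polygon with the two given leaves as edges, not a minimal one, and that polygon need not stay inside the lune. Its boundary only has to avoid crossing $\ol{st}$ in the interior of $\disk$; it may pass through the corners $s$ and $t$ and continue on the far side, with $\ol{st}$ becoming a diagonal. Concretely, suppose $C$ consists of the boundary of a collapsing hexagon $\hat a_1\hat b_1\hat a_2\hat b_2\hat a_3\hat b_3$ together with the diagonal $\ol{\hat a_1\hat b_2}$ (the paper's discussion after the lemma explicitly allows leaves of $(\si_d^*)^{-1}(\ell)$ inside the polygons $P_i(\ell)$). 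If your first polygon is the quadrilateral $\hat a_1\hat b_1\hat a_2\hat b_2$, then in the lune across $\ol{\hat a_1\hat b_2}$ the adjacent pair $\ol{\hat b_2\hat a_3},\ \ol{\hat a_3\hat b_3}$ may legitimately return the \emph{hexagon} under Lemma~\ref{colpol}, which exits the lune; so the sub-instance is not closed under the colpol operation, and the induction as described does not get off the ground.

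The paper avoids induction altogether, and its device is worth adopting: at a vertex $x$ of $C$ carrying at least two leaves, take the two \emph{extreme} leaves $\ell',\ell''$ (whose wedge contains all other leaves of $C$ at $x$), apply Lemma~\ref{colpol}, and among all collapsing polygons with edges $\ell',\ell''$ choose one, $P$, that is \emph{maximal by inclusion} (possible, since there are only finitely many preimage leaves of $\ell$). If some leaf $\ol{yz}\subset C$ emanates from a vertex $y$ of $P$ and is not contained in $P$, it is disjoint from $P$ except at $y$; applying Lemma~\ref{colpol} to $\ol{yz}$ and the edge $\ell'''$ of $P$ at $y$ on that side yields a collapsing polygon $P'$, and $P\cup P'$ with $\ell'''$ erased is a collapsing polygon strictly containing $P$ --- contradicting maximality. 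Connectedness of $C$ then gives $C\subseteq P$, so $\partial(C)$ is the vertex set of $P$ and $G=P$. Note that this uses precisely your gluing move ($P\cup P'$ with the shared edge turned into a diagonal), but only once and \emph{against maximality}, which is what removes both the need for a well-founded inductive statement and the false lune-containment claim; it also makes your worry about like-typed adjacent vertices moot, since every edge of $P$ is by construction a leaf mapping onto $\ell$.
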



\begin{proof}
If $C$ is not a leaf then there exists $x\in \partial(C)$ which is a
vertex of at least two leaves from $C$. Choose leaves $\ell'$ and
$\ell''$ in $C$ which form an angle containing all other leaves from $C$ with
the endpoint $x$. By Lemma~\ref{colpol} $\ell'$ and $\ell''$ are edges
of a collapsing polygon $P$. By properties of laminations all other
leaves of $C$ with an endpoint $x$ (if any) are diagonals of $P$. Choose
a collapsing polygon $P$, which is maximal by inclusion,
with edges $\ell'$ and $\ell''$.

Let $y$ be a vertex of $P$ and a leaf
$\ol{yz}\subset C$ come out of $y$ and is not contained in $P$. By
properties of a lamination then $\ol{yz}$ is in fact disjoint from $P$
(except for $y$). Choose the edge $\ell'''$ of $P$ with an endpoint $y$
so that the triangle formed by the convex hull of $\ell'''$ and $\ol{yz}$ is disjoint from
the interior of $P$. Then by Lemma~\ref{colpol} there exists a
collapsing polygon $P'$ with edges $\ol{yz}$ and $\ell'''$. It follows
that $P\cup P'$ with $\ell'''$ removed is a collapsing polygon strictly
containing $P$, a contradiction. Thus, $P=G$ as desired.
\end{proof}

From now on by $P_1(\ell), \dots, P_k(\ell)$, with $\ell=\ol{ab}$
non-degenerate, we mean the $\{(\sigma_d^{\ast})^{-1}(\ell)\setminus
\cup_i \bc^{\circ}_i\}$ (leaves or collapsing polygons) from
Lemma \ref{collapse}. Note that all edges of the sets
$P_i(\ell)$ are leaves of $\lam$. We view the set
$(\si^*_d)^{-1}(\ell)$ as follows. By Definition~\ref{dfn-invariant},
there is a full preimage collection $L$ of $\ell$. The endpoints of its
leaves are either $a$-points or $b$-points (depending on whether they
map to $a$ or $b$). Often these are all the leaves mapped to $\ell$.
Yet, there might exist other leaves which map into $\ell$. Some of
these leaves map onto $\ell$; a leaf like that connects an $a$-point
from one leaf of $L$ to a $b$-point from another. Some of these leaves
are critical and map to $a$ ($b$); a leaf like that connects two
$a$-points ($b$-points) belonging to distinct leaves from $L$. Consider
the sets $P_1(\ell), \dots, P_k(\ell)$. There might exist leaves from
$(\si^*_d)^{-1}(\ell)$ \emph{inside} $P_i(\ell)$, however we often
ignore these leaves (which might be either critical or not). There
might also exist other leaves connecting sets $P_i(\ell)$ and
$P_j(\ell)$, $i\ne j$. By Lemma~\ref{collapse} such leaves must be
critical.

\begin{lem}\label{colpol1}
Suppose that $\ol{xv_0\cdots v_ky}=M$ is an ordered concatenation of
leaves of $\lam$ such that $\si_d(v_i)=w$ for all $i$ and
$\si_d(x)=\si_d(y)\neq w$. Then there exists a collapsing polygon
$P_i(\si^*_d(\ol{xv_0}))$ which contains $M$.
\end{lem}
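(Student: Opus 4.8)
The plan is to identify the image leaf and reduce the statement to a connectivity claim about preimage components. Set $a=\si_d(x)=\si_d(y)$ and $w=\si_d(v_0)=\dots=\si_d(v_k)$; since $a\neq w$, the chord $\ell=\si^*_d(\ol{xv_0})=\ol{aw}$ is a non-degenerate leaf. Each inner leaf $\ol{v_iv_{i+1}}$ maps to the single point $w\in\ell$, so it is critical, while the two end leaves $\ol{xv_0}$ and $\ol{v_ky}$ map \emph{onto} $\ell$ and are not critical; in particular $\si^*_d(M)=\ell$ and $M\subseteq(\si^*_d)^{-1}(\ell)$. Recall (from Lemma~\ref{collapse}) that each $P_i(\ell)$ is the convex hull $\ch(\partial(C))$ of a component $C$ of $(\si^*_d)^{-1}(\ell)\sm\cup_j\bc^{\circ}_j$. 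Since $\partial(C)\subseteq\uc$ is closed, $\ch(\partial(C))\cap\uc=\partial(C)$, and because $M$ lies in the convex hull of its own circularly ordered vertices $x,v_0,\dots,v_k,y$, the conclusion ``$M\subseteq P_i(\ell)$'' is \emph{equivalent} to the assertion that all these vertices lie in a single set $\partial(C)$. Thus it suffices to show that the two end leaves (and all intermediate $w$-points) belong to the closure of one and the same component $C$.

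I would argue by induction on the number $k$ of critical inner leaves. For $k=0$ we have $M=\ol{xv_0}\cup\ol{v_0y}$, two distinct leaves sharing the vertex $v_0$ with $\si_d(\ol{xv_0})=\si_d(\ol{v_0y})=\ell$. Lemma~\ref{colpol} then yields a collapsing polygon $P$ having $\ol{xv_0}$ and $\ol{v_0y}$ as edges and satisfying $\si_d(P)=\ell$. Every edge of $P$ maps onto the non-degenerate leaf $\ell$, hence no edge of $P$ is critical; consequently $\Bd(P)$ is a connected subset of $(\si^*_d)^{-1}(\ell)\sm\cup_j\bc^{\circ}_j$ and lies in a single component $C$. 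Therefore $P=\ch(\Bd(P))\subseteq\ch(\partial(C))=P_i(\ell)$, and since $M\subseteq P$ we are done.

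For the inductive step $k\geq 1$ I would absorb the first critical leaf $\ol{v_0v_1}$. Using that $\lam$ is sibling invariant (Definition~\ref{dfn-invariant}) and arguing exactly as in the proof of Lemma~\ref{colpol}, I would locate an $a$-point $x_1$ and a leaf $\ol{x_1v_1}$ mapping onto $\ell$ such that $\ol{xv_0}$, the critical leaf $\ol{v_0v_1}$, and $\ol{x_1v_1}$ bound a collapsing polygon; the positioning is controlled by Lemma~\ref{loca} and the orientation statement of Corollary~\ref{cor-posor1}, which force $x_1$ onto the correct arc so that $\ol{v_0v_1}$ becomes an \emph{interior diagonal} of this polygon and $x,v_0,v_1,x_1$ all lie in one component $C_0$. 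The shorter ordered concatenation $\ol{x_1v_1v_2\cdots v_ky}$ then satisfies the hypotheses of the lemma with $k-1$ critical leaves, so by the inductive hypothesis it is contained in some $P_j(\ell)=\ch(\partial(C_j))$ with $\ol{x_1v_1}\subseteq C_j$. As $\ol{x_1v_1}$ is a non-critical leaf lying in both $C_0$ and $C_j$, these components coincide; hence $\partial(C_0)$ contains all of $x,v_0,\dots,v_k,y$, and $M\subseteq\ch(\partial(C_0))=P_i(\ell)$.

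The genuinely delicate point --- and the one I expect to be the main obstacle --- is the geometric claim inside the inductive step: that the critical leaf $\ol{v_0v_1}$ is an interior diagonal of a single collapsing polygon rather than a \emph{bridge} joining two distinct preimage components $P_i(\ell)$ and $P_j(\ell)$. If such bridging were allowed, $M$ would straddle two polygons and the lemma would fail. Ruling this out is exactly where the flanking structure is used: both ends $x$ and $y$ are $a$-points and the extreme leaves $\ol{xv_0}$, $\ol{v_ky}$ map fully onto $\ell$, so the critical chain is ``closed off'' by $\ell$-leaves on both sides. The no-crossing count of Lemma~\ref{loca} (which shows that siblings sit on prescribed sides of a non-crossed chord) together with the orientation control of Corollary~\ref{cor-posor1} then forces each critical leaf of $M$ to lie inside the collapsing polygon built from its neighbouring $\ell$-leaf, precisely as in the construction carried out in Lemma~\ref{colpol}. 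Once this positioning is established, the remainder is routine bookkeeping about convex hulls and connected components.
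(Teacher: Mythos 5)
Your reduction of the lemma to the claim that all vertices of $M$ lie in $\partial(C)$ for a single component $C$ of $(\si^*_d)^{-1}(\ell)\sm\cup_j\bc^{\circ}_j$ is correct, and so is your base case; both match the paper's use of Lemma~\ref{colpol} and Lemma~\ref{collapse}. The genuine gap is in the inductive step, exactly at the point you yourself flag as delicate. To join $\ol{xv_0}$ to the sibling $\ol{x_1v_1}$ you invoke a collapsing polygon of $\lam$ having these two \emph{disjoint} leaves as edges and the critical leaf $\ol{v_0v_1}$ as an interior diagonal, claiming it can be built ``exactly as in the proof of Lemma~\ref{colpol}.'' It cannot: Lemma~\ref{colpol}, in both its statement and its proof, starts from two leaves with a \emph{common} endpoint, and the construction there proceeds by repeatedly shrinking the wedge at that common vertex using the two full sibling collections. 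With two disjoint siblings there is no wedge to start from; the polygon you want would have to contain leaves of $\lam$ joining $v_0$ and $v_1$ to $a$-points inside the arc $(v_0,v_1)$, and neither Lemma~\ref{loca} (a crossing-parity count for \emph{existing} chords) nor Corollary~\ref{cor-posor1} (an orientation statement about \emph{existing} pullbacks) produces any such leaves. Their existence is essentially the content of the lemma being proved, so as written the step is circular, and the ``bridging'' scenario you worry about is never actually excluded.

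The paper closes exactly this hole by chaining from the $y$-end, where a common-endpoint pair is available. After producing siblings $\ol{x_iv_i}$ of $\ol{xv_0}$ with $x_i\in(v_{i-1},v_i)$ (Lemma~\ref{loca}, applied leaf by leaf along the critical chain), it applies Lemma~\ref{colpol} to $\ol{x_kv_k}$ and $\ol{v_ky}$, which \emph{do} share the endpoint $v_k$, obtaining a collapsing polygon $P_0$. The bridging is then automatic: the critical leaf $\ol{v_{k-1}v_k}$ is a leaf of $\lam$ issuing from the vertex $v_k$ into the interior of $P_0$ (as $v_{k-1}$ lies on the arc spanned by the remaining vertices of $P_0$); it cannot cross the edges of $P_0$, and its interior cannot meet $\uc$, so its other endpoint $v_{k-1}$ must be a vertex of $P_0$ --- the critical leaf is a diagonal, not a bridge. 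The edge of $P_0$ arriving at $v_{k-1}$ is then paired with the sibling $\ol{x_{k-1}v_{k-1}}$, again at a common endpoint, and the process repeats; the union of the boundaries of these polygons with the non-critical leaves of $M$ is a connected union of non-critical leaves mapping onto $\ell$, hence lies in one component, and Lemma~\ref{collapse} finishes. Your induction can be repaired by running it in this direction: absorb the \emph{last} critical leaf, replacing $y$ by the endpoint $y_1$ of the edge of $P_0$ arriving at $v_{k-1}$, so that at every stage the pair fed to Lemma~\ref{colpol} shares an endpoint.
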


\begin{proof}
We see that $\ol{xv_0}$ and $\ol{v_ky}$ have the same non-degenerate
image while all leaves $\ol{v_0v_1}, \ol{v_1v_2}, \dots$ are critical.
Assume that $x=x_0<v_0<\dots<v_k<y=y_0$. Applying Lemma~\ref{loca} to
the leaf $\ol{x_0v_0}$ and its full sibling collection, we get a point $x_1, v_0<x_1<v_1$ and a leaf
$\ol{x_1v_1}$ which is a sibling of $\ol{x_0v_0}$. Similarly we get
points $x_2, \dots, x_k$ located between points $v_1, v_2,\dots, v_k$
and leaves $\ol{x_iv_i}$ which are siblings of $\ol{x_0v_0}$.

Now, apply Lemma~\ref{colpol} to leaves $\ol{x_kv_k}$ and
$\ol{v_ky_0}$. Then there exists a collapsing polygon $P_0$ with these
leaves as edges. It follows that $v_{k-1}$ is a vertex of $P_0$ and
there is an ordered concatenation of siblings of $\ol{y_0v_k}$ which
begins with $\ol{y_0v_k}$ and ends with some leaf $\ol{y_1v_{k-1}}$. We
can pair this leaf up with the leaf $\ol{x_{k-1}v_{k-1}}$ and apply the
same arguments. In this manner we will discover a ``long'' ordered
concatenation of siblings which begins with $\ol{y_0v_k}$ and ends with
$\ol{v_0x_0}$. By Lemma~\ref{collapse} there exists a collapsing
polygon $P_i(\si^*_d(\ol{xv_0}))$ from the collection of collapsing
polygons described in Lemma~\ref{collapse} which contains this
concatenation. It follows that $M\subset P_i(\si^*_d(\ol{xv_0}))$ as
desired.
\end{proof}

A lamination is called {\em gap-free} if it has no gaps. In the next few
lemmas we study such laminations. Lemma~\ref{wedge} is left to the reader.

\begin{lem}\label{wedge}
$\mathcal{L}$ does not contain a collection of leaves with one common
endpoint such that their other endpoints fill up an arc $I\subset \uc$.
\end{lem}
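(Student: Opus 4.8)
The plan is to argue by contradiction. Suppose $\lam$ contains leaves $\ol{vx}$ with a common endpoint $v\in\uc$ whose other endpoints $x$ fill up a nondegenerate arc $I\subset\uc$. Since $\lam^*$ is closed, I may assume that $\ol{vx}\in\lam$ for \emph{every} $x$ in a nondegenerate closed arc $\ol I=[a,b]_\uc$; I will call such a family of leaves with a common apex a \emph{pencil}. The idea is to forward-iterate this pencil until its base arc becomes all of $\uc$, so that the resulting pencil fills the entire disk, and then to contradict condition~(3) of Definition~\ref{dfn-invariant}.

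First I would blow up the base arc. By condition~(1) of Definition~\ref{dfn-invariant} (forward invariance) the image of each leaf $\ol{vx}$ is the leaf $\ol{\si_d(v)\si_d(x)}$ of $\lam$ (or a point, when $\ol{vx}$ happens to be critical). Thus $\si_d$ carries the pencil over $\ol I$ to a pencil over $\si_d(\ol I)$ with apex $\si_d(v)$, and closedness of $\lam^*$ again restores the at most finitely many missing leaves. As long as the base arc has length less than $1/d$ the map $\si_d$ is one-to-one on it, so the base arc is genuinely stretched by the factor $d$ at each step; hence after finitely many iterations the base arc attains length at least $1/d$, and one further application of $\si_d$ sends it \emph{onto all of} $\uc$. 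Writing $v^*$ for the resulting apex, I obtain leaves $\ol{v^*y}\in\lam$ for every $y\in\uc$, i.e.\ a pencil based at $v^*$ whose leaves cover the whole closed disk $\ol\disk$ (every interior point lies on the chord from $v^*$ to the second point where the ray through it exits the disk).

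Finally I derive the contradiction. Choose $y\in\uc$ with $\si_d(y)\neq\si_d(v^*)$, so that $\ell=\ol{v^*y}$ has a nondegenerate image $\si_d(\ell)$. By condition~(3) of Definition~\ref{dfn-invariant} there must exist $d\ge 2$ \emph{pairwise disjoint} leaves of $\lam$ all mapping onto $\si_d(\ell)$. Any leaf of $\lam$ other than a pencil leaf has its interior in the open disk, which is entirely covered by pencil leaves, so it would cross some $\ol{v^*z}$ at an interior point, which is impossible in a lamination. Hence the only leaves mapping onto the nondegenerate chord $\si_d(\ell)$ are pencil leaves $\ol{v^*z}$; but any two of these share the endpoint $v^*$ and so are not disjoint. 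Thus one cannot produce even two disjoint leaves with image $\si_d(\ell)$, contradicting condition~(3). The main obstacle is the bookkeeping in the blow-up step: one must make sure the iterated images really are leaves (handling critical leaves that collapse to points, and the possibility that the apex lands inside the base arc, which does not disrupt the foliation argument), and that closedness of $\lam^*$ legitimately restores the full pencil at each stage. Once the base arc becomes all of $\uc$, the final contradiction is immediate.
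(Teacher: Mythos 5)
Your proof is correct and follows essentially the same route as the paper's: use expansion of $\si_d$ together with forward invariance (condition (1) of Definition~\ref{dfn-invariant}) to push the pencil forward until its base arc covers all of $\uc$, then contradict condition (3), since all leaves mapping onto a given non-degenerate chord would have to share the apex $v^*$ and hence cannot contain $d\ge 2$ pairwise disjoint siblings. The extra bookkeeping you supply (closedness of $\lam^*$ restoring missing leaves, handling of critical pencil leaves, and the crossing argument showing every leaf is a pencil leaf) is sound and merely fills in details the paper leaves to the reader.
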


A continuous interval map $f:I\to I$ is called a \emph{$d$-sawtooth
map} if it has $d$ intervals of monotonicity of length $\frac{1}d$ and
the slope on each such interval is $\pm d$.

\begin{lem}\label{gapfree}
If $\lam$ is gap-free then it consists of a family of pairwise
disjoint parallel leaves which fill up the entire disk $\ol{\disk}$
except for two diametrically opposed points $a, b\in \uc$. The
factor map $p$ which collapses each leaf to a point, semiconjugates
$\si_d$ to a $d$-sawtooth map.
\end{lem}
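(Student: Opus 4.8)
**The plan is to analyze the structure of a gap-free sibling $d$-invariant lamination directly from the definition, first establishing that the leaves form a foliation of the disk by pairwise-disjoint parallel chords, then identifying the two exceptional boundary points, and finally checking the semiconjugacy claim by examining how $\si_d$ acts on the quotient.**

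First I would argue that, since $\lam$ has no gaps, the set $\lam^*$ must fill the entire closed disk $\ol{\disk}$: if some point $z \in \disk$ were not on any leaf, it would lie in an open component of $\disk \sm \lam^*$, whose closure would be a gap, a contradiction. So every point of $\disk$ lies on a (possibly degenerate) leaf. The key structural fact to extract is that the leaves are pairwise disjoint inside $\disk$ (immediate from the definition of a prelamination) and that, because they fill the disk with no gaps, they must be nested in a way that forces them to be mutually parallel. The obstruction to ruling out is that two leaves might meet at a boundary point of $\uc$, forming a ``fan''; here I would invoke Lemma~\ref{wedge}, which forbids a collection of leaves sharing a common endpoint whose other endpoints fill an arc. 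A careful argument shows that a gap-free lamination cannot have any two leaves sharing an endpoint at all (such a shared vertex would, together with the filling property, produce either a gap on one side or force a forbidden fan), so each point of $\uc$ is an endpoint of at most one leaf. This yields a genuine foliation of $\ol{\disk}$ by disjoint chords.

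Next I would identify the two exceptional points. A foliation of the closed disk by pairwise-disjoint chords is a standard object: collapsing each chord to a point gives a quotient homeomorphic to an interval $I$, and the two ``ends'' of this interval correspond to two distinguished points $a, b \in \uc$ which are not interior endpoints of any nondegenerate leaf (they are the limits of the two families of chord-endpoints). Let $p : \ol{\disk} \to I$ be this collapsing map. To see that $a, b$ are diametrically opposed and that the leaves are parallel, I would use the forward and backward invariance of $\lam$ (Definition~\ref{dfn-invariant}(1)--(3)): the map $\si_d^*$ sends leaves to leaves, and the foliation structure is preserved, which rigidly constrains the geometry. Concretely, since every point of $\uc$ is the endpoint of exactly one leaf and leaves go to leaves, the images of the two families of endpoints emanating from $a$ and $b$ must again be the endpoints of a foliation, and matching up the circle dynamics $\si_d$ with the induced interval dynamics forces the $a, b$ configuration to be the symmetric (diametric) one.

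Finally, for the semiconjugacy, I would check that $\si_d$ descends through $p$. Because $\si_d^*$ carries leaves to leaves, it induces a well-defined continuous map $g : I \to I$ with $p \circ \si_d = g \circ p$, so $p$ is a semiconjugacy. The content is that $g$ is a $d$-sawtooth map: on $\uc$ the map $\si_d$ is $d$-to-one with $d$ monotone branches of length $\frac{1}{d}$, and since collapsing the parallel leaves amounts to projecting the circle onto a diameter (folding $\uc$ in half along the $ab$-axis), the induced map on $I$ inherits $d$ full monotone laps, each of slope $\pm d$ after the natural identification of $I$ with an interval of the correct length. \textbf{The main obstacle I anticipate is the rigidity step}: ruling out shared endpoints and non-parallel configurations, and then pinning down that the two bad points are exactly diametrically opposed rather than in some other invariant position. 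This is where Lemma~\ref{wedge} and the precise interaction between the circle dynamics $\si_d$ and the foliation's interval-quotient dynamics must be combined carefully; the remaining verification that $g$ is a $d$-sawtooth map is then largely a matter of bookkeeping with the $d$ monotone branches of $\si_d$.
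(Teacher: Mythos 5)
Your first half follows the paper's own route: you invoke Lemma~\ref{wedge} to rule out leaves sharing an endpoint (two such leaves would bound a wedge free of leaves, i.e.\ a gap), conclude that the leaves are pairwise disjoint, pass to the interval quotient, and identify exactly two points $a, b \in \uc$ that are not endpoints of leaves. Up to that point the proposal is correct and essentially identical to the paper's argument.

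The genuine gap is the rigidity step, which you explicitly flag as ``the main obstacle'' but never carry out: saying that ``matching up the circle dynamics $\si_d$ with the induced interval dynamics forces the $a,b$ configuration to be the symmetric (diametric) one'' restates the conclusion rather than proving it. The paper's mechanism is concrete and quantitative, and it is exactly what is missing. Since $\{a,b\}$ is precisely the set of points of $\uc$ that are not endpoints of leaves, it is forward invariant, so one runs a short case analysis (say $\si_d(a)=a$, $\si_d(b)=b$). Traveling from $a$ to $b$ through the family of leaves one meets $d-1$ critical leaves $\bc_1,\dots,\bc_{d-1}$, and each $\bc_i$ must collapse to $a$ or to $b$: if $\si_d(\bc_i)$ were an endpoint of a non-degenerate leaf $\ell$, then some preimage leaf of $\ell$ would meet $\bc_i$ at an endpoint, contradicting pairwise disjointness. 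Hence the endpoints of the critical leaves form the \emph{full} preimages of $a$ and $b$, and the circle arcs between consecutive such endpoints map one-to-one onto $\uc$, so each has length exactly $\frac{1}{d}$. This equal spacing is what forces the critical leaves to be perpendicular to $\ol{ab}$ and forces $\ol{ab}$ to be a diameter; pulling the critical leaves back then yields that all leaves are parallel, and only after that does the identification of the induced interval map as a $d$-sawtooth map become the ``bookkeeping'' you describe. Without the collapse-to-$\{a,b\}$ claim and the length-$\frac{1}{d}$ arc count, the diametric and parallel conclusions --- the actual content of the lemma --- remain unproved.
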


\begin{proof}
Let $\ell_0, \ell_1\in \lam$ be leaves with a common endpoint. By
Lemma~\ref{wedge} we may assume that $\ell_0, \ell_1$ form a wedge with
no leaves of $\lam$ in it. This implies that $\lam$ is not gap-free, a
contradiction. Hence all leaves are pairwise disjoint. Consider an
equivalence relation on $\disk$ identifying every leaf into one class.
The absence of gaps implies that then the quotient space is an interval
$I$ and that there are only two points $a, b\in \uc$ which are not
endpoints of leaves from $\lam$. Moreover, if $p:\uc\to I$ is the
corresponding factor map then $p(a), p(b)$ are the endpoints of $I$.
Moreover, all leaves of $\lam$ must cross the chord $\ol{ab}$. Indeed,
suppose that $\ol{uv}$ is a leaf of $\lam$ such that the circular arc
$I=[u, v]_{\uc}$ contains neither $a$ nor $b$. Then there must be a gap
of $\lam$ with vertices in $I$ or a point $t\in (u, v)$ disjoint from
all leaves of $\lam$, a contradiction,

Since $\lam$ is invariant, then either $\si_d(a)=a, \si_d(b)=b$, or
$\si_d(a)=b, \si_d(b)=a$, or $\si_d(a)=\si_d(b)=a$, or
$\si_d(a)=\si_d(b)=b$. Consider the case when $\si_d(a)=a, \si_d(b)=b$
(other cases are similar). Then by continuity it follows that as we
travel from $a$ to $b$ along the leaves of $\lam$ we meet critical
leaves $\bc_1, \bc_2, \dots, \bc_{d-1}$ (in this order). Suppose that a
critical leaf $\bc_i$ maps to an endpoint of a leaf $\ell\in \lam$.
Then some preimage-leaf of $\ell$  will meet $\bc_i$ at one of its
endpoints which is impossible as all leaves are pairwise disjoint.
Hence all critical leaves of $\lam$ map to $a$ or $b$. On the other
hand, all leaves whose images are non-disjoint from $a$ or $b$, must be
critical.

It follows from the first and second paragraphs of the proof that the
critical leaves $\bc_1=\ol{x_1y_1}, \dots,
\bc_{d-1}=\ol{x_{d-1}y_{d-1}}$ map alternately to $b$ and $a$ and their
endpoints form the full preimages of $a$ and $b$. Assume that the
positive circular order is given by $b, x_{d-1}, \dots, x_1, a, y_1,
\dots, y_{d-1}$. Then the arc $[x_2, a)$ maps one-to-one onto $\uc$ and
hence has length $\frac{1}{d}$ while the same applies to the arc $(a,
y_2]$. Continuing in the same manner we will see that all critical
leaves are in fact perpendicular to the chord $\ol{ab}$ which in fact
is a diameter of $\disk$. By pulling critical leaves back we complete
the proof in the case when $\si_d(a)=a, \si_d(b)=b$. Other cases can be
considered similarly.
\end{proof}


We are ready to prove the main result of this section.

\begin{proof} [Proof of Theorem \ref{gapinv}:]
Let $G$ be a gap of $\lam$. If there are two adjacent leaves on the
boundary of $G$ which have the same image, then by
Lemma~\ref{colpol} the gap $G$ maps to a leaf. Moreover, if there
are two  leaves in $\bd(G)$ which have the same image and are
connected with a finite concatenation of critical leaves in $\bd(G)$
then by Lemma~\ref{colpol1} again $G$ maps to a leaf. Hence from now
on we may assume that the above two cases do not take place on the
boundary of $G$.

In particular, this implies that the $\si^*_d$-image of the boundary
of $G$ is not an arc. Indeed, otherwise we can choose an endpoint
$x$ of $\si^*_d(\bd(G))$ and a point $\hat x\in \bd(G)$ such that
$\si^*_d(\hat x)=x$. Moreover, clearly $x$ must belong to $\uc$ and
$\hat x$ can be chosen to belong to $\uc$ too. It is easy to see
that under the assumptions made in the first paragraph this is impossible.

Let us show that the $\si^*_d$-image of the boundary of $G$ is
itself the boundary of a gap. To do so, first consider the map $m$
which collapses all critical leaves in $\bd(G)$ to points and is
otherwise one-to-one. Clearly $m(\bd(G))$ is a simple closed curve
and there exists a map $g$ defined on $m(\bd(G))$ such that $g \circ
m = \si^*_d|_{\bd(G)}$. Let us show that $g$ is locally one-to-one.
Clearly $g$ is locally one-to-one on the image of every non-critical
leaf, and by the first paragraph $g$ is locally one-to-one at the
common endpoint of two concatenated leaves in the boundary of $G$.
If $x\in \bd(G)$ is not the endpoint of two concatenated leaves,
then it follows easily from the fact that $\si_d$ is locally
one-to-one that $g$ is locally one-to one at $x$ as well. Set
$m(\bd(G))=S$.

By Lemma~\ref{triorder}, there is no monotone arc $I\subset \bd(G)$
which monotonically maps to \emph{sides} of a monotone triod $T\su
\lam^*$ (as opposed to its central edge, see
Definition~\ref{triod-def}). Let us show that then $\si^*_d(\bd(G))$
is the boundary of a gap. Choose a bounded component $U$ of the
complement of $\si^*_d(\bd(G))$. The boundary $\bd(U)$ of $U$ is a
simple closed curve which contains some leaves. Let $m(\ell)\subset
S$ be the $m$-image of a leaf $\ell\subset \bd(G)$ which maps by $g$
to a leaf $\si_d(\ell)\subset \bd(U)$. Since there exists no
monotone arc $I\subset \bd(G)$ which monotonically maps to
\emph{sides} of a monotone triod $T\su \lam^*$, then, as a point
continues moving along $S$, its $g$-image must move along $\bd(U)$.
Hence $\si^*_d(\bd(G))$ coincides with the simple closed curve
$\bd(U)$.  Moreover, since $g$ is locally one-to-one, $\si^*$ maps
subarcs of $\bd(G)$ monotonically onto subarcs of $\bd(U)$. Hence,
by Lemma~\ref{triorder}, $\bd(U)$ is the boundary of a gap of
$\lam$.

Let a gap $H$ of $\lam$ be such that $\si^*_d(\bd(G))=\bd(H)$. To
show that $\lam$ is gap-invariant it suffices to show that
$\si^*_d|_{\bd(G)}$ can be represented as the positively oriented
composition of a monotone map and a covering map. Consider the map
$m$ which collapses all critical leaves in $\bd(G)$ to points and is
otherwise one-to-one. Clearly, there exists a map $g$ defined on
$m(\bd(G))$ such that $g\circ m=\si^*_d|_{\bd(G)}$. We can define a
circular order on $m(\bd(G))$ by choosing three points $x_i\in
m(\bd(G))$ such that $m^{-1}(x_i)$ is a point. Then $x_1<x_2<x_3$
if and only if $m^{-1}(x_1)<m^{-1}(x_2) <m^{-1}(x_3)$. Let us show
that $g$ preserves orientation.

Consider a point $a\in \partial(G)$ which is not an endpoint of a
critical leaf in $\bd(G)$. By Corollary~\ref{cor-posor1} (if $a$ is an
endpoint of a leaf in $\bd(G)$) or because of the fact that $\si_d$
preserves local orientation (if $a$ is not an endpoint of a leaf from
$\bd(G)$ and is hence approached by points of $\partial(G)$ from the
appropriate side) it follows that $\si^*_d$ (and hence $g$) preserves
the local orientation at all such points $a$.

Let us now assume that $C=\ol{x_1x_2\cdots x_k}\subset \bd(G)$ is a
maximal by inclusion concatenation of critical leaves in $\bd)G)$.
Clearly, $m(C)$ is a point $x$ of $m(\bd(G))$. Choose a monotone arc
$I\subset \bd(G)$ with endpoints $p, q$ such that
$p<x_1<\dots<x_k<q$ and $C\subset I$ (we can make these assumptions
without loss of generality). If neither $x_1$ nor $x_k$ is an
endpoint of a non-critical leaf from $\bd(G)$, it follows from the
properties of $\si_d$ that $g$ preserves orientation. Suppose that
there is a leaf $\ba=\ol{x_ka}\subset I$. Choose a full sibling
collection of $\ol{x_ka}$ and let $\ol{x_1a'}$ be a leaf from this
collection. By Lemma~\ref{loca}, applied repeatedly, $x_1<a'<x_2$.
It is easy to see that the map $\si^*_d$ preserves orientation on
$Q=B\cup \ol{x_1a'}$ where $B$ is a small subarc of $\bd(G)$ with an
endpoint $x_1$ otherwise disjoint from the leaf $\ol{x_1x_2}$ (as
before, in the case when $x_1$ is an endpoint of a leaf
$\ol{x_1b}\subset \bd(G)$ it follows from
Corollary~\ref{cor-posor1}, and in the case when $x_1$ is approached
by points of $\partial(G)$ it follows from the fact that $\si_d$
locally preserves orientation). Therefore $g$ preserves orientation
at $m(x_1)$ as desired.
\end{proof}

The basic property defining $d$-invariant sibling laminations is
that every non-critical leaf can be extended to a collection of $d$
pairwise disjoint leaves with the same image (a full sibling
collection). We conclude this subsection by showing that this
implies the same result for arcs as long as their images are
monotone arcs.

\begin{lem}\label{l:siblingarcs}
Suppose that $\si^*_d$ homemorphically maps an arc $A\su \lam^*$ to
a monotone arc $B\su \lam^*$. Then there are $d$ pairwise disjoint
arcs $A_1=A, A_2, \dots, A_d$ such that for each $i$ the map
$\si^*_d$ homemorphically maps the arc $A_i$ to $B$.
\end{lem}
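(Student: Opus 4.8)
=== PROOF PROPOSAL ===

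\textbf{The plan} is to decompose the arc $A$ into its constituent leaves and the points at which they concatenate, build full sibling collections leaf-by-leaf, and then assemble these siblings into $d$ disjoint arcs by matching them up at the shared vertices. Since $\si^*_d$ maps $A$ homeomorphically onto the monotone arc $B$, and $B\su \lam^*$, the arc $A$ itself must be a finite concatenation of leaves $\ol{x_0x_1}\,\ol{x_1x_2}\cdots \ol{x_{n-1}x_n}$ of $\lam$ (a homeomorphic image of an interval in $\lam^*$ with zero-dimensional intersection with $\uc$ that maps onto a monotone arc cannot accumulate infinitely, and each sub-chord must be a genuine leaf since its image is a nondegenerate sub-leaf of $B$). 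The monotonicity of $B$ guarantees that no edge $\si^*_d(\ol{x_{j-1}x_j})$ is critical, so each edge has a genuine nondegenerate image and Definition~\ref{dfn-invariant}(3) applies to it.

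First I would fix, for each edge $\ell_j=\ol{x_{j-1}x_j}$ of $A$, a full sibling collection $\{\ell_j=\ell_j^{(1)},\ell_j^{(2)},\dots,\ell_j^{(d)}\}$ of $d$ pairwise disjoint leaves all mapping onto $\si^*_d(\ell_j)$. The key structural step is to show these sibling collections are \emph{compatible} across consecutive edges: at the shared vertex $x_j$, the $d$ preimage points of $\si_d(x_j)$ are exactly the $d$ vertices $v_j^{(1)},\dots,v_j^{(d)}$ (with $v_j^{(1)}=x_j$) at which the siblings of $\ell_j$ meet the siblings of $\ell_{j+1}$, so that one can consistently glue a sibling of $\ell_j$ to a sibling of $\ell_{j+1}$ at a common preimage of $\si_d(x_j)$. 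The tool here is Corollary~\ref{cor-posor1} (and the order-preservation it encodes): because $A$ is monotone, the local configuration at $x_j$ together with the orientation-preservation of pullbacks forces a unique way to pair up siblings so that the glued object again has its endpoints in the correct circular order and remains monotone. Concretely, I would index the siblings so that the $k$-th sibling of $\ell_j$ ends at the same preimage $v_j^{(k)}$ of $\si_d(x_j)$ at which the $k$-th sibling of $\ell_{j+1}$ begins; Corollary~\ref{cor-posor1} applied to the two-leaf arc $\ell_j\ell_{j+1}$ and its $d$ unlinked pullbacks guarantees that these glued arcs $A_k=\ell_1^{(k)}\ell_2^{(k)}\cdots\ell_n^{(k)}$ are each monotone arcs mapping homeomorphically onto $B$.

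The remaining point is \textbf{disjointness}: I must verify that the $d$ assembled arcs $A_1,\dots,A_d$ are pairwise disjoint. Distinct leaves of $\lam$ are automatically disjoint (inside $\disk$) or meet only at a circle point, so within a single ``column'' (fixed $j$) the siblings $\ell_j^{(1)},\dots,\ell_j^{(d)}$ are disjoint. The subtlety is ruling out a sibling $\ell_j^{(k)}$ of one edge crossing a sibling $\ell_{j'}^{(k')}$ of another edge for $j\ne j'$; here I would use that all these leaves lie in the unlinked pullback configuration provided by Corollary~\ref{cor-posor1}, which packages the full preimage of $B(\ell_j\ell_{j+1})$ into $d$ sets each contained in disjoint circle arcs $J_1,\dots,J_d$ — so arcs with different indices live over disjoint circle arcs and cannot meet. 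I expect \textbf{this gluing-and-disjointness bookkeeping to be the main obstacle}: the existence of siblings edge-by-edge is immediate, but proving that the choices can be made \emph{coherently} at every vertex simultaneously (so that a single sibling of the whole arc $A$ emerges, rather than a tangle of mismatched leaves) is where the orientation machinery of Corollary~\ref{cor-posor1} must be invoked carefully, possibly by an induction on the number $n$ of edges, extending $A_k$ one leaf at a time and using the order-preservation at each new vertex to keep the growing arcs disjoint and monotone.
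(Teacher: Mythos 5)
Your proposal breaks down at its very first step: the claim that $A$ must be a \emph{finite} concatenation of leaves is false, and nothing in the hypotheses forces it. The relation $A\su\lam^*$ only says that $A\cap \uc$ is zero-dimensional, and a monotone arc in $\lam^*$ may contain countably infinitely many leaves: for example, a sequence of leaves $\ol{a_na_{n+1}}$ with $a_n$ converging monotonically along $\uc$ to a point $a_\iy$, united with the limit point $a_\iy$, is a topological arc whose intersection with $\uc$ is zero-dimensional (such configurations occur, e.g., on boundaries of infinite gaps). The paper's own proof explicitly works in this generality: it only asserts that $B$ contains \emph{no more than countably many} leaves, ordered so that $\diam(\ell_n)\to 0$. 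Consequently your leaf-by-leaf gluing, ``extending $A_k$ one leaf at a time'' by induction on the number $n$ of edges, never terminates in general, and it has no mechanism for dealing with points of $A\cap\uc$ that are limits of leaves and are not the common endpoint of two adjacent leaves.

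There is a second, independent gap in your disjointness argument. Corollary~\ref{cor-posor1} (via its proof) says that the preimages of the finitely many \emph{endpoints} involved can be grouped into $d$ sets lying in disjoint circle arcs; it does not say that the sibling \emph{leaves} are confined to those arcs. A sibling of a leaf of $B$ can have its two endpoints in distinct fundamental domains $J_r$ and $J_s$ --- these are exactly the ``long'' leaves around which the paper's proof is organized --- so the assertion that ``arcs with different indices live over disjoint circle arcs and cannot meet'' is unjustified. The paper handles this by showing (using $\diam(\ell_n)\to 0$, compactness, and monotonicity of $B$) that only finitely many leaves of $B$ are long, pulling back the subarcs of $B$ between consecutive long leaves inside single fundamental domains, and then taking components of the union of these pullbacks with the full sibling collections of the long leaves. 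For what it is worth, in the genuinely finite case your gluing scheme can be repaired without any appeal to the $J_r$'s: siblings of distinct edges of $A$ can only meet at preimages of their common image vertex, because all vertices of the arc $B$ are distinct; but that repair does not touch the main gap, which is the false finiteness reduction.
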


\begin{proof}
Suppose that the endpoints of $B$ are $p, q$ and that $B\subset [p,
q]=I$. Denote by $J_1, \dots, J_d$ circle arcs which map one-to-one
to $I$. Since $B$ is a monotone arc, there are no more than
countably many leaves in $B$. We can order them so that they form a
sequence of leaves $\ell_n\subset B$ with $\diam(\ell_n)\to 0$.
Given a leaf $\ell_n\subset B$ we can choose its unique preimage
$\ell^1_n\subset A$, and then choose a full sibling collection of
$\ell^1_n$ consisting of leaves $\ell^1_n, \ell^2_n, \dots,
\ell^d_n$. In other words, we choose full preimage collection of
leaves for each leaf from $B$ so that this preimage collection
includes a leaf from $A$. Call $\ell_n$ \emph{long} if there exists
a leaf $\ell^j_n$ with endpoints coming from distinct sets $J_r$ and
$J_s$.

We claim that here are no more than finitely many long leaves in
$B$. Indeed, suppose otherwise. By continuity we can then choose a
subsequence for which we may assume that 1) there is a sequence of
leaves $t_n\subset B$ which converges to a point $x\in \uc$ from one
side, and 2) there is a sequence of their pullback-leaves $\hat t_n$
(i.e., $\si^*_d(\hat t_n)=t_n$) which converges to a critical leaf
$\hat x$ from one side. Clearly, this is impossible since $B$ is a
monotone arc.

Suppose that $\ell_{n_1}=\ol{a_{n_1}b_{n_1}}, \dots,
\ell_{n_k}=\ol{a_{n_k}b_{n_k}}$ are all long leaves in $B$. Without
loss of generality we may assume that $a_{n_1}<b_{n_1}\le
a_{n_2}<\dots <b_{n_k}$. Denote closures of components of $B\sm
\bigcup \ell_{n_i}$ by $S_1, S_2, \dots, S_{k+1}$ numbered in the
natural order on the circle so that $S_1$ precedes $a_{n_1}$, $S_2$
is located between $b_{n_1}$ and $a_{n_2}$, etc (some of these sets
may be empty, e.g. if $p=a_{n_1}$ then $S_1$ is empty).

Then each $S_j$ has $d$ pullbacks, each of which is a monotone arc
$\widehat S^r_j$ such that $\partial(\widehat S^r_j)\subset I_r$.
Let us consider the union of all such pullbacks with all leaves from
previously chosen full preimage collections of all leaves
$\ell_{n_1}, \dots, \ell_{n_k}$ (i.e., of all long leaves in $B$).
Basically, we consider all preimage collections of leaves of $B$ and
then take the closure of their union, representing it in a
convenient form. We claim that each component of this union is a
monotone arc which maps onto $B$ in a one-to-one fashion. Indeed, by
the construction each such component $X$ can be extended both
clockwise and counter-clockwise until it reaches points mapped to
$p$ and $q$ respectively. It follows from the construction that
these components are pairwise disjoint and that one of them is the
originally given arc $A=A_1$. This completes the proof.
\end{proof}

\subsection{The space of all sibling invariant laminations}\label{spasib}

Our approach is to describe laminations in the ``language of
leaves''. The main idea is to use sibling invariant laminations for
that purpose. By Theorem~\ref{gapinv} this does not push us outside
the class of Thurston invariant laminations. According to the
philosophy, explained in the Introduction, now we need to verify if
the  set of all sibling invariant laminations contains all
q-laminations and is Hausdorff closed. The first step here is made
in Lemma~\ref{l-qissib} in which we relate sibling invariant
laminations and q-laminations. Observe that it is well-known (and
easy to prove) that $d$-invariant q-laminations are Thurston
$d$-invariant laminations.

\begin{lem}\label{l-qissib}
$d$-invariant q-laminations are sibling $d$-invariant.
\end{lem}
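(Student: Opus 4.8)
The plan is to verify directly the three defining conditions of a sibling $d$-invariant lamination (Definition~\ref{dfn-invariant}) for the q-lamination $\lam=\lam_\sim$ generated by an invariant laminational equivalence relation $\sim$ (Definition~\ref{d-qlam}), using only the properties (E1)--(E2) and (D1)--(D4) of $\sim$ from Definition~\ref{d-lameq}. Recall that every non-degenerate leaf $\ell=\ol{ab}$ of $\lam$ is an edge of the convex hull $\ch(\g)$ of a unique $\sim$-class $\g$, so $a,b\in\g$ are adjacent vertices of $\ch(\g)$; equivalently, one of the two arcs of $\uc\sm\{a,b\}$ is a component of $\uc\sm\g$ (a \emph{gap-arc} of $\g$). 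The whole argument rests on translating adjacency of vertices into this statement about gap-arcs and then applying the positively oriented covering property (D3).

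For condition (1), I would take a leaf $\ell=\ol{ab}$, edge of $\ch(\g)$. By (D1) the set $\mathfrak{d}:=\si_d(\g)$ is a $\sim$-class. If $\si_d(a)=\si_d(b)$ then $\si_d(\ell)$ is a point of $\uc$ and there is nothing to prove; otherwise I must check that $\ol{\si_d(a)\si_d(b)}$ is an edge of $\ch(\mathfrak{d})$. When $|\g|=2$ this is immediate, since $\ch(\mathfrak{d})$ then has the single edge $\ol{\si_d(a)\si_d(b)}$; when $|\g|>2$ I apply (D3) to the gap-arc of $\g$ cut off by $\ell$, whose $\si_d$-image is a component of $\uc\sm\mathfrak{d}$, so $\si_d(a),\si_d(b)$ are adjacent vertices of $\ch(\mathfrak{d})$ and $\si_d(\ell)$ is a leaf.

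For conditions (2) and (3) the key construction is the same. Given a leaf $\ell=\ol{ab}$ with non-degenerate image $\si_d(\ell)=\ol{ce}$, set $\mathfrak{d}=\si_d(\g)$ (a class by (D1)), and let $W\subset\uc$ be the gap-arc of $\mathfrak{d}$ cut off by the edge $\ol{ce}$ of $\ch(\mathfrak{d})$; since $|\mathfrak{d}|\ge 2$, $W$ is a proper arc containing no points of $\mathfrak{d}$. As $\si_d$ is a degree-$d$ covering, $\si_d^{-1}(W)$ splits into exactly $d$ pairwise disjoint arcs $W_1,\dots,W_d$, each mapped homeomorphically onto $W$, with $\partial W_i$ consisting of one $c$-point $\hat c_i$ and one $e$-point $\hat e_i$. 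I would prove that each chord $\ol{\hat c_i\hat e_i}$ is a leaf of $\lam$: letting $\g^{(i)}$ be the class of $\hat c_i$, forward invariance (D1) gives $\si_d(\g^{(i)})=\mathfrak{d}$, and (D3) applied at $\hat c_i$ shows that the component of $\uc\sm\g^{(i)}$ issuing from $\hat c_i$ toward $W_i$ is carried homeomorphically onto $W$, hence coincides with $W_i$ and has its far endpoint in $\g^{(i)}$; that endpoint is forced to be $\hat e_i$, so $\ol{\hat c_i\hat e_i}$ is an edge of $\ch(\g^{(i)})$ (the case $|\g^{(i)}|=2$ being handled separately). This yields $d$ leaves mapping onto $\ol{ce}$; and since $W_1,\dots,W_d$ are pairwise disjoint arcs, their subtending chords are pairwise unlinked and share no endpoints, hence are pairwise disjoint. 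Checking that the gap-arc of $\g$ behind $\ell$ itself maps onto $W$ shows $\ell=\ol{\hat c_1\hat e_1}$ is one of them. Any single $\ol{\hat c_i\hat e_i}$ then gives condition (2), and the full collection $\{\ell,\ol{\hat c_2\hat e_2},\dots,\ol{\hat c_d\hat e_d}\}$ gives the $d$ disjoint siblings of condition (3).

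The main obstacle I anticipate is condition (3), specifically guaranteeing that the $d$ preimage leaves are \emph{pairwise disjoint}, including not sharing endpoints: the subtlety is that within a single class several edges may map to $\ol{ce}$ and can be adjacent (share a vertex). This is exactly why I route the argument through the $d$ components of $\si_d^{-1}(W)$ rather than through ``all edges mapping to $\ol{ce}$''; disjointness of the arcs $W_i$ makes disjointness of the chords automatic, and the count is exactly $d$ because $W$ has $d$ preimage components. Everything else is a routine application of (D1)--(D3), and Proposition~\ref{abcount} can be invoked if an explicit count of $c$- and $e$-points between the chords is preferred over the covering-space phrasing.
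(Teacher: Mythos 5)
Your verification of condition (1) is fine, but the key step for conditions (2) and (3) contains a genuine gap. You claim that, for the class $\g^{(i)}$ of $\hat c_i$, ``(D3) applied at $\hat c_i$ shows that the component of $\uc\sm\g^{(i)}$ issuing from $\hat c_i$ toward $W_i$ is carried homeomorphically onto $W$, hence coincides with $W_i$.'' Condition (D3) does not give this: it is purely a statement about \emph{endpoints} (the endpoints of a complementary arc of a class map to the endpoints of a complementary arc of the image class); it says nothing about the arc mapping injectively onto its image arc. Indeed it cannot: if a class $\h$ maps onto $\mathfrak{d}$ as a covering of degree $k$, then summing lengths of complementary arcs of $\h$ and of their image paths gives $d=k+\sum_A k_A$, where $k_A\ge 0$ is the number of extra full turns the image of the arc $A$ makes around $\uc$. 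So whenever $k<d$ (in particular for every class of a $d$-invariant relation with $d\ge 2$, unless that single class already has full degree $d$), some complementary arc of $\h$ wraps around the whole circle before closing up; such an arc is strictly longer than $|W|/d$, and its far endpoint is \emph{not} the endpoint of the corresponding component of $\si_d^{-1}(W)$. Consequently the chords $\ol{\hat c_i\hat e_i}$ subtended by the components $W_i$ need not be leaves at all.

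Here is a concrete failure with $d=3$. Take a $3$-invariant laminational equivalence relation whose classes include $\{0,1/2\}$ (a fixed leaf), $\{1/6,1/3\}$ and $\{2/3,5/6\}$ (its preimage leaves), the \emph{critical quadrilateral} class $\h=\{1/18,\,1/9,\,7/18,\,4/9\}$ mapping $2$-to-$1$ onto $\{1/6,1/3\}$, and the class $\g=\{13/18,\,7/9\}$ mapping $1$-to-$1$ onto $\{1/6,1/3\}$; one checks (E1)--(E2), (D1)--(D4) directly for this configuration, and it extends to a full invariant relation by the standard pullback construction. Apply your construction to the leaf $\ell=\ol{(1/9)(7/18)}$, an edge of $\ch(\h)$. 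The gap-arc of $\h$ behind $\ell$ is $(1/9,7/18)$, whose image is the \emph{long} arc $W=(1/3,1/6)$ of length $5/6$. The three components of $\si_3^{-1}(W)$ are $(1/9,7/18)$, $(4/9,13/18)$ and $(7/9,1/18)$, each of length $5/18$; so your chords are $\ell$, $\ol{(4/9)(13/18)}$ and $\ol{(7/9)(1/18)}$, and the last two are \emph{not} leaves: each joins a vertex of $\ch(\h)$ to a vertex of $\ch(\g)$, i.e.\ points of two different classes. (The complementary arc of $\h$ issuing from $4/9$ in the positive direction is $(4/9,1/18)$, of length $11/18=5/18+1/3$: it wraps once, exactly the phenomenon above, and the class $\g$ sits inside it, consistently with (D3).) The correct full sibling collection of $\ell$ is $\{\ell,\,\ol{(4/9)(1/18)},\,\ol{(13/18)(7/9)}\}$: two \emph{opposite} edges of the collapsing quadrilateral $\ch(\h)$ together with the leaf $\ch(\g)$. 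The irony is that you chose the route through the $W_i$ precisely to make disjointness automatic, but that is exactly where being a leaf is lost; disjointness and leafness pull in opposite directions, and reconciling them is the actual content of the lemma.

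This is why the paper's proof is organized class by class rather than by components of $\si_d^{-1}(W)$: it considers the collection $\A$ of all classes mapping onto the class of $\si_d(\ell)$; each non-critical class in $\A$ contributes exactly one sibling (the unique edge of its hull over $\si_d(\ell)$), each critical class mapping $k$-to-$1$ contributes $k$ pairwise disjoint edges of its hull chosen using the degree-$k$ covering structure (e.g.\ alternating edges of a collapsing $2k$-gon), and the covering degrees add up to $d$; disjointness of siblings coming from distinct classes then follows from unlinkedness (E2). A corrected version of your argument would have to do the same: work with the edges of the classes of the $d$ preimage points of one endpoint of $\si_d(\ell)$, taken in a consistent direction determined by (D3), instead of with the arcs $W_i$.
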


\begin{proof}
Assume that $\sim$ is a $d$-invariant laminational
equivalence relation. 
Conditions (1) and (2) of Definition~\ref{dfn-invariant} are immediate.
To check condition (3) of Definition~\ref{dfn-invariant}, assume that
$\ell$ is a non-critical leaf of $\lam_\sim$ and verify that there are
$d$ pairwise disjoint leaves $\ell=\ell_1, \dots, \ell_d$ with the same
image. To do so, consider the collection $\A$ of all $\sim$-classes
which map to the $\sim$-class of $\si_d(\ell)$. If a $\sim$-class
$\g\in \A$ does not contain $\ell$ and is not critical, then $\bd(\g)$
contains only a unique sibling of $\ell$. If $\g$ is critical, it maps
to the $\sim$-class of $\si_d(\ell)$, say, $k$-to-$1$, and we can
choose $k$ pairwise disjoint siblings leaves of $\ell$ on the boundary
of $\ch(\g)$. If $\ell$ is an edge of the set $\ch(\h)$ where $\h$ is a
critical class we can still find the appropriate number of its pairwise
disjoint siblings in the boundary of $\ch(\h)$. The endpoints of leaves
from the thus created list exhaust the list of all points which are
preimages of endpoints of $\si_d(\ell)$. Thus, we get $d$ siblings one
of which is $\ell$ as desired.
\end{proof}

We prove that all limits of q-laminations are sibling $d$-invariant.
Thus, if there is a Thurston $d$-invariant lamination $\lam$ which is
not sibling $d$-invariant, then it is not a Hausdorff limit of
q-laminations which shows that the class of Thurston $d$-invariant
laminations is too wide if we are interested in Hausdorff limits of
q-laminations. This justifies our interest in the next example
illustrated on Figure~\ref{nosibf}.

Suppose that points $\hat x_1, \hat y_1, \hat z_1, \hat x_2, \hat
y_2, \hat z_2$ are positively ordered on $\uc$ and $H=\ch(\hat x_1
\hat y_1 \hat z_1 \hat x_2 \hat y_2 \hat z_2)$ is a critical hexagon
of an invariant q-lamination $\lam$ such that $\si^*_d:H\to T$ maps
$H$ in the $2$-to-$1$ fashion onto the triangle $T=\ch(xyz)$ with
$\si_d(\hat x_i)=x, \si_d(\hat y_i)=y, \si_d(\hat z_i)=z$. Now, add
to the lamination $\lam$ the leaves $\ol{\hat x_1 \hat z_1}$ and
$\ol{\hat x_1 \hat x_2}$ and all their pullbacks along the backward
orbit of $H$ under $\si^*_d$. Denote the thus created lamination
$\lam'$. It is easy to see that $\lam'$ is Thurston $d$-invariant
but not sibling $d$-invariant because $\ol{\hat x_1 \hat z_1}=\ell$
cannot be completed to a full sibling collection (clearly, $H$ does
not contain siblings of $\ell$).

\begin{figure}[h!]
\centering\def\svgwidth{.7\columnwidth}\fbox{\fbox{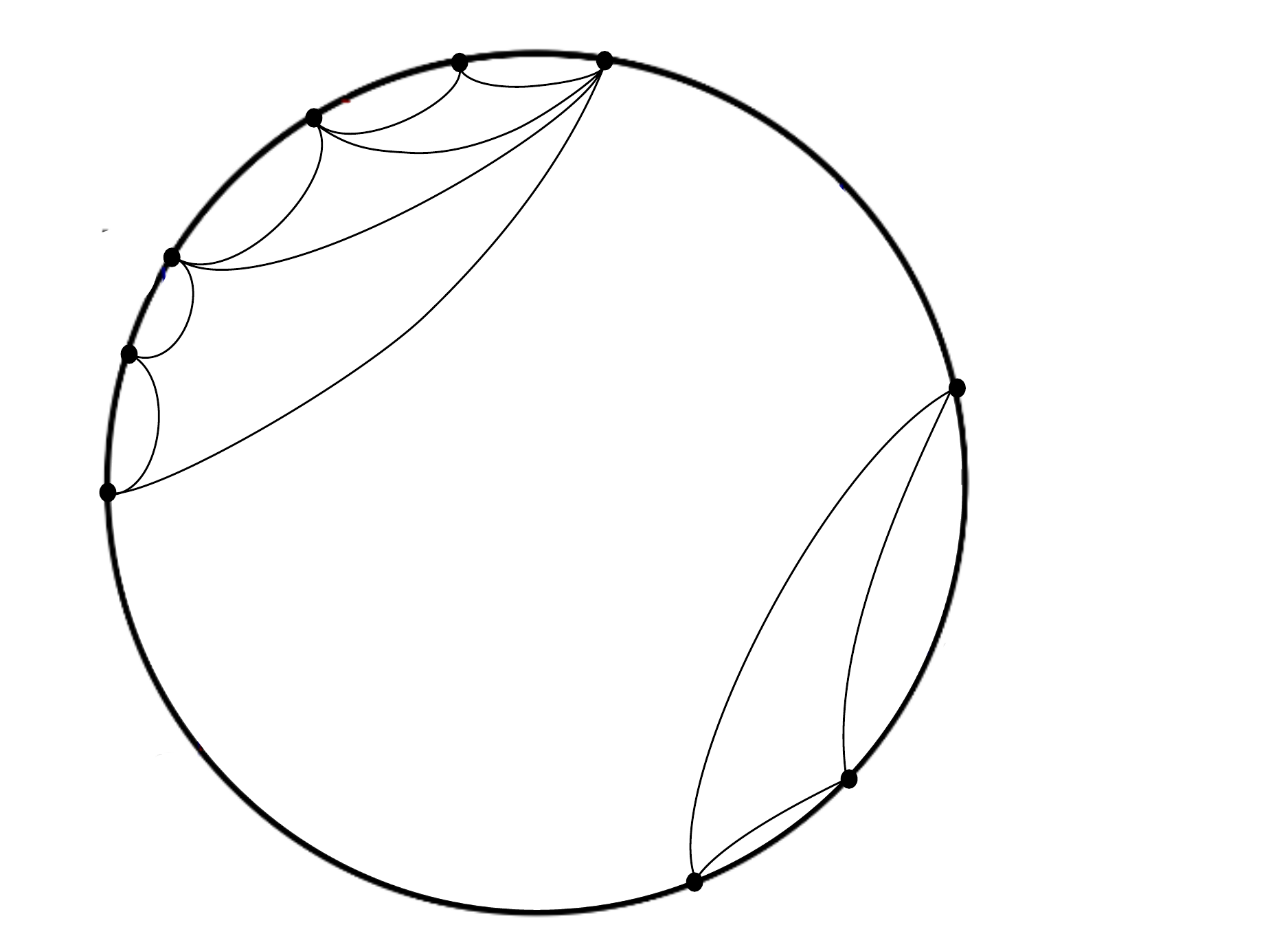}}
 \caption{An example of a Thurston invariant lamination which is not sibling invariant.
 The leaf $\ol{\hat x_1 \hat z_1}$ has no siblings in $H$.}
 \label{nosibf}
\end{figure}

\begin{lem}\label{sibling-sequences}
Take sequences of $d$ sibling leaves $\ol{\ha^j_i\hb^j_i},$ $1\le j\le
d,$ $i=1, 2, \dots$ such that $\ol{\ha^1_i\hb^1_i}\to
\ol{\ha^1\hb^1}=\ell^1,$ $\ol{\ha^2_i\hb^2_i}\to
\ol{\ha^2\hb^2}=\ell^2, \dots,$ $\ol{\ha^d_i\hb^d_i}\to
\ol{\ha^d\hb^d}=\ell^d$ and $\si_d(\ell^1)$ is not degenerate. Then
$\ell^j$, $1\le j\le d$ are siblings with non-degenerate image.
\end{lem}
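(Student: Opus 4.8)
The plan is to separate the statement into two essentially independent assertions: that the $\ell^j$ share a common non-degenerate image, and that they are pairwise disjoint. The first is a soft consequence of continuity, while the second rests on a counting argument exploiting that $\si_d$ is a $d$-to-$1$ covering with equally spaced fibers.

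First I would handle the images. Since $\si^*_d$ is continuous, $\si_d(\ell^j)=\lim_i \si_d(\ol{\ha^j_i\hb^j_i})$ for each $j$. For fixed $i$ the $d$ leaves $\ol{\ha^j_i\hb^j_i}$ form a full sibling collection, so they all have one common image $m_i$; hence $\si_d(\ell^j)=\lim_i m_i=\si_d(\ell^1)$ for every $j$, and this common image is non-degenerate by hypothesis. Because its image is a non-degenerate leaf, each $\ell^j$ is non-critical, so $\si^*_d$ maps $\ell^j$ one-to-one onto $\si_d(\ell^1)$. This already gives the "non-degenerate image" part of the conclusion and shows each $\ell^j$ maps as required of a sibling.

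Next I would prove that the $2d$ endpoints are all distinct. Write $\si_d(\ell^1)=\ol{\alpha\beta}$ with $\alpha\neq\beta$, and $m_i=\ol{\alpha_i\beta_i}$ with $\alpha_i\to\alpha$, $\beta_i\to\beta$. For large $i$ each leaf of the $i$-th collection is non-critical and maps onto $m_i$, so I can label its endpoints so that $\ha^j_i$ maps to $\alpha_i$ and $\hb^j_i$ to $\beta_i$, consistently with $\ha^j_i\to\ha^j$, $\hb^j_i\to\hb^j$. Because the $d$ leaves of a full sibling collection are pairwise disjoint (disjoint even at endpoints, by the definition of sibling), the points $\ha^1_i,\dots,\ha^d_i$ are $d$ distinct preimages of $\alpha_i$; since the $\si_d$-fiber of any point is equally spaced at distance $1/d$, any two of them lie at circular distance at least $1/d$. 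The same holds for $\hb^1_i,\dots,\hb^d_i$. This separation $\geq 1/d$ is preserved in the limit, so $\ha^1,\dots,\ha^d$ are pairwise distinct and likewise $\hb^1,\dots,\hb^d$; moreover each $\ha^j$ is a preimage of $\alpha$ and each $\hb^k$ a preimage of $\beta$, so $\{\ha^j\}\cap\{\hb^k\}=\0$ because $\alpha\neq\beta$. Thus all $2d$ endpoints of $\ell^1,\dots,\ell^d$ are distinct; in particular the $\ell^j$ are distinct and share no endpoint.

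Finally I would rule out crossings inside $\disk$. Since the four endpoints of any two leaves $\ell^j,\ell^k$ are now known to be distinct, whether the chords are linked is a stable (open) condition on the endpoints; hence if $\ell^j$ and $\ell^k$ crossed in the open disk, then $\ol{\ha^j_i\hb^j_i}$ and $\ol{\ha^k_i\hb^k_i}$ would cross for all large $i$, contradicting their disjointness as siblings. Therefore the $\ell^j$ are pairwise disjoint, and together with the common non-degenerate image established above they constitute $d$ siblings, as claimed. The main obstacle is the disjointness assertion---specifically, excluding the possibility that two limit leaves merge or acquire a shared endpoint---and this is exactly the point where the equal $1/d$-spacing of the $\si_d$-fibers does the essential work.
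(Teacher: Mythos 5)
Your proof is correct and takes essentially the same route as the paper's: continuity of $\si^*_d$ gives the common non-degenerate image, and disjointness follows from endpoint separation — points with the same image lie at circular distance at least $1/d$, while points whose images are distinct cannot coincide, which is exactly the paper's parenthetical counting argument. You merely make explicit the final step (stability of linking/crossing under convergence once all endpoints are known to be distinct) that the paper leaves implicit.
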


\begin{proof}
By continuity, $\si_d(\ell^j)=\si_d(\ell^1)$ for all $j$. To show that
leaves $\ell^j, 1\le j\le d$ are pairwise disjoint consider $i_0$ and
$\e>0$ such that for each $i\ge i_0$ and each $j$, $1\le j\le d$ we
have that $|\ol{\ha^j_i\hb^j_i}|\ge \e$. Then it follows that for a
fixed $i$ the pairwise distance between points of sets $\{\ha^j_i,
\hb^j_i\}$, for $1\le j\ne k\le d$ is bounded away from $0$ ($\ha^j_i,
\ha^k_i$ cannot be too close because their images coincide and
$\ha^j_i, \hb^k_i$ cannot be too close because their images are too far
apart). Hence the leaves $\ell^j$, $1\le j\le d$ are disjoint as
desired.
\end{proof}

Suppose that $\lam$ is a prelamination. Then by its closure
$\ol{\lam}$ we mean the set of chords in $\ol{\disk}$ which are
limits of sequences of leaves of $\lam$. It is easy to see that
$\ol{\lam}$ is a closed lamination. Corollary~\ref{c:prelasib} shows
that it is enough to verify the property of being sibling invariant
on dense prelaminations. It immediately follows from
Lemma~\ref{sibling-sequences} (so that it is given here without
proof).

\begin{cor}\label{c:prelasib}
If $\lam$ is a sibling $d$-invariant prelamination, then its closure
$\ol{\lam}$ is a sibling $d$-invariant lamination.
\end{cor}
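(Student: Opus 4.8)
The plan is to verify, one at a time, the three conditions of Definition~\ref{dfn-invariant} for $\ol{\lam}$, using that each non-degenerate leaf $\ell\in\ol{\lam}$ is by definition a Hausdorff limit $\ell=\lim_i\ell_i$ of leaves $\ell_i\in\lam$, and that the space of chords of $\ol{\disk}$ is compact under Hausdorff convergence, so that preimage and sibling leaves chosen in $\lam$ admit convergent subsequences. Throughout I use the continuity of $\si^*_d$, so that $\ell_i\to\ell$ forces $\si_d(\ell_i)\to\si_d(\ell)$. I take for granted, as the text already asserts, that $\ol{\lam}$ is a closed lamination (limits of chords that are pairwise disjoint off $\uc$ cannot cross inside $\disk$, since nearby approximating leaves would then cross as well); hence only the dynamical conditions (1)--(3) need checking.

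For condition (1), let $\ell\in\ol{\lam}$ with $\si_d(\ell)$ non-degenerate (otherwise (1) holds trivially). Writing $\ell=\lim_i\ell_i$ with $\ell_i\in\lam$, for all large $i$ the image $\si_d(\ell_i)$ is non-degenerate, hence a leaf of $\lam$ by condition (1) for $\lam$; since $\si_d(\ell_i)\to\si_d(\ell)$, the limit $\si_d(\ell)$ lies in $\ol{\lam}$. For condition (2), given $\ell=\lim_i\ell_i\in\ol{\lam}$, use condition (2) for $\lam$ to pick $\ell'_i\in\lam$ with $\si_d(\ell'_i)=\ell_i$; passing to a convergent subsequence $\ell'_i\to\ell'$ gives $\ell'\in\ol{\lam}$ with $\si_d(\ell')=\ell$, and $\ell'$ is non-degenerate whenever $\ell$ is (its two endpoints map onto the two distinct endpoints of $\ell$). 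The degenerate cases of (1) and (2) are immediate, since images and preimages of points of $\uc$ under $\si_d$ are again points of $\uc$.

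Condition (3) is where Lemma~\ref{sibling-sequences} does the real work, and I expect this to be the only substantive step. Let $\ell\in\ol{\lam}$ with $\si_d(\ell)$ a non-degenerate leaf, and write $\ell=\ell^1=\lim_i\ell^1_i$ with $\ell^1_i\in\lam$. For large $i$ the image $\si_d(\ell^1_i)$ is non-degenerate, so condition (3) for $\lam$ supplies $d$ pairwise disjoint siblings $\ell^1_i,\ell^2_i,\dots,\ell^d_i\in\lam$ with common image $\si_d(\ell^1_i)$. By compactness I pass to a subsequence along which $\ell^j_i\to\ell^j$ for each $j$, with $\ell^1=\ell$. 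Since $\si_d(\ell^1)=\si_d(\ell)$ is non-degenerate, Lemma~\ref{sibling-sequences} applies and guarantees that $\ell^1,\dots,\ell^d$ are genuine siblings with non-degenerate image: in particular they are pairwise disjoint and non-degenerate, and each satisfies $\si_d(\ell^j)=\si_d(\ell)$. As each $\ell^j$ is a limit of leaves of $\lam$, it lies in $\ol{\lam}$, so $\{\ell^1,\dots,\ell^d\}$ is the required full sibling collection in $\ol{\lam}$. The main obstacle, namely that a Hausdorff limit of pairwise disjoint leaves could collapse or share endpoints and so fail to remain a collection of $d$ disjoint non-degenerate leaves, is precisely what Lemma~\ref{sibling-sequences} rules out; once it is invoked, the corollary follows.
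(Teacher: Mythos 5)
Your proof is correct and follows exactly the route the paper intends: the paper states that Corollary~\ref{c:prelasib} follows immediately from Lemma~\ref{sibling-sequences} and omits the details, and your argument is precisely that omitted verification (conditions (1) and (2) by continuity and compactness of the space of chords, condition (3) by extracting convergent subsequences of full sibling collections and invoking Lemma~\ref{sibling-sequences} to preserve disjointness and non-degeneracy in the limit). No gaps; this matches the paper's approach.
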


Theorem~\ref{Thurstonimpliessibling} follows from Lemma~\ref{l-qissib} and
Lemma~\ref{sibling-sequences}.


\begin{thm}\label{Thurstonimpliessibling}
The Hausdorff limit of a sequence of sibling invariant laminations is a
sibling invariant lamination. The space of all sibling invariant
laminations is closed in the Hausdorff sense and contains all
q-laminations.
\end{thm}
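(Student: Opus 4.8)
The plan is to prove the two assertions of Theorem~\ref{Thurstonimpliessibling} in the order stated, drawing the first from Lemma~\ref{sibling-sequences} and the second partly from Lemma~\ref{l-qissib}. The combinatorial content is essentially already isolated in Lemma~\ref{sibling-sequences}, so the main work is a bookkeeping argument that packages the Hausdorff convergence correctly; I do not expect a genuine obstacle, but some care is required in handling degenerate limit leaves.

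First I would establish that the Hausdorff limit $\lam$ of a convergent sequence $\lam_i$ of sibling $d$-invariant laminations is itself a lamination. Since each $\lam^*_i$ is a continuum and $\lam^*_i\to \lam^*$ in the Hausdorff sense, the limit $\lam^*$ is a continuum as well; the leaves of $\lam$ are the chords that are limits of sequences of leaves $\ell_i\in \lam_i$, and all points of $\uc$ are included as degenerate leaves. The only thing needing a remark is that any two limit leaves meet at most in a point of $\uc$: if two limit leaves crossed in the open disk, then for large $i$ the approximating leaves $\ell_i\in\lam_i$ and $\ell_i'\in\lam_i$ would also cross inside $\disk$, contradicting the fact that $\lam_i$ is a prelamination. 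Thus $\lam$ is a closed lamination.

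Next I would verify the three conditions of Definition~\ref{dfn-invariant} for $\lam$. Let $\ell=\ol{\ha^1\hb^1}$ be a nondegenerate leaf of $\lam$ and choose leaves $\ell_i=\ol{\ha^1_i\hb^1_i}\in \lam_i$ with $\ell_i\to \ell$. Condition (1) (forward invariance) follows by continuity of $\si^*_d$: each $\si_d(\ell_i)$ is either a leaf of $\lam_i$ or a point of $\uc$, and passing to the limit shows $\si_d(\ell)$ is a leaf of $\lam$ or a point of $\uc$. Condition (2) (a preimage leaf exists) likewise follows by choosing, for each $i$, a leaf mapping onto $\ell_i$ and passing to a convergent subsequence. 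The substantive point is condition (3). Suppose $\si_d(\ell)$ is nondegenerate; then for large $i$ the images $\si_d(\ell_i)$ are nondegenerate too, so by sibling invariance of $\lam_i$ we may extend each $\ell_i=\ell^1_i$ to a full sibling collection $\ell^1_i,\dots,\ell^d_i$ of pairwise disjoint leaves of $\lam_i$ with common image. Passing to a subsequence along which all $d$ sequences converge, say $\ell^j_i\to \ell^j$, I invoke Lemma~\ref{sibling-sequences} (with the hypothesis that $\si_d(\ell^1)$ is nondegenerate, which holds) to conclude that $\ell^1,\dots,\ell^d$ are pairwise disjoint siblings with common nondegenerate image; since $\ell^1=\ell$, this is exactly the required full sibling collection. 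Hence $\lam$ is sibling $d$-invariant, proving the first sentence and the closedness assertion.

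Finally, that the space contains all q-laminations is immediate from Lemma~\ref{l-qissib}, which states precisely that $d$-invariant q-laminations are sibling $d$-invariant. The step I expect to require the most care is the application of Lemma~\ref{sibling-sequences}: one must ensure that the limit image $\si_d(\ell^1)$ is nondegenerate (guaranteed here because $\si_d(\ell)$ was assumed nondegenerate and convergence of the $\ell^j_i$ to $\ell^j$ forces their common image to converge to $\si_d(\ell)$), and one must extract a single subsequence along which all $d$ sibling sequences converge simultaneously, which is possible because $\ol{\disk}$ (and hence the space of chords) is compact.
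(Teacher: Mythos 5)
Your proposal is correct and follows essentially the same route as the paper: the paper derives the theorem precisely from Lemma~\ref{sibling-sequences} (applied to full sibling collections of approximating leaves, after extracting a convergent subsequence, to verify condition (3) of Definition~\ref{dfn-invariant}) together with Lemma~\ref{l-qissib} for the containment of q-laminations. Your additional bookkeeping --- checking that the Hausdorff limit is a lamination, that limit leaves cannot cross in $\disk$, and that conditions (1) and (2) pass to the limit --- matches the steps the paper treats as immediate.
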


\section{Proper laminations}\label{s:prolam}

Clearly, not all (sibling, Thurston) invariant laminations are
q-lami\-na\-tions (e.g., a lamination with two finite gaps with a common
edge is not a q-lamination). In this section we address this issue
and describe Thurston $d$-invariant laminations which almost coincide
with appropriate q-laminations. According to the adopted approach,
we use the ``language of leaves'' in our description.

\begin{dfn}[Proper lamination]
Two leaves with a common endpoint $v$ and the same image which is a
leaf (and not a point) are said to form a \emph{critical wedge} (the
point $v$ then is said to be its vertex). A lamination $\lam$ is
\emph{proper} if it contains no critical leaf with periodic endpoint
and no critical wedge with periodic vertex.
\end{dfn}

Proper laminations are instrumental for our description of
lami-nations which almost coincide with q-laminations.


\begin{lem}\label{l-qprop}
Any q-lamination is proper.
\end{lem}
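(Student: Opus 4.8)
The plan is to reduce both forbidden configurations — a critical leaf with a periodic endpoint, and a critical wedge with a periodic vertex — to the existence of a single $\sim$-class that is simultaneously periodic and critical, where $\sim=\approx_\lam$ is the invariant laminational equivalence relation generating the q-lamination $\lam=\lam_\sim$ (Definition~\ref{d-qlam}). Call a $\sim$-class $\g$ \emph{critical} if $\si_d|_\g$ is not one-to-one, and \emph{periodic} if $\si_d^n(\g)=\g$ for some $n\ge 1$. The heart of the argument is the claim that \emph{no periodic $\sim$-class is critical}.

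To prove this claim I would use only forward invariance (D1) and finiteness (D4) from Definition~\ref{d-lameq}. Suppose $\g$ is periodic, $\si_d^n(\g)=\g$. By (D1) each $\g_i:=\si_d^i(\g)$ is a class, and $\si_d^n|_\g$ is the composition $\si_d|_{\g_{n-1}}\circ\cdots\circ\si_d|_{\g_0}$ with $\g_0=\g$. Since $\si_d^n(\g)=\g$, the self-map $\si_d^n|_\g:\g\to\g$ is onto, and as $\g$ is finite it is a bijection, hence injective. A composition of maps of finite sets can be injective only if its first factor is injective, so $\si_d|_\g=\si_d|_{\g_0}$ is one-to-one; that is, $\g$ is not critical. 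Note that this does not even require the covering condition (D3).

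Then I would apply the claim twice. For the first forbidden configuration, if $\ell=\ol{ab}$ is a critical leaf of $\lam$ (so $a\neq b$ and $\si_d(a)=\si_d(b)$) with, say, $a$ periodic of period $n$, then, since leaves of $\lam$ are edges of convex hulls of $\sim$-classes, both $a$ and $b$ lie in one class $\g$; the collapse $\si_d(a)=\si_d(b)$ makes $\g$ critical, while $\si_d^n(a)=a\in\si_d^n(\g)$ together with (D1) and the fact that $\sim$-classes partition $\uc$ forces $\si_d^n(\g)=\g$, so $\g$ is periodic — contradicting the claim. For the second configuration, if two leaves $\ol{vx}$ and $\ol{vy}$ form a critical wedge with $v$ periodic, then each leaf is an edge of the convex hull of a class containing $v$; since $v$ lies in a unique class, $v$, $x$, $y$ all belong to one class $\g$. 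The wedge condition (equal non-degenerate image) forces $\si_d(x)=\si_d(y)$ with $x\neq y$, so $\g$ is critical, and periodicity of $v$ makes $\g$ periodic exactly as before — again contradicting the claim.

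I expect no serious obstacle. The only points requiring care are the bookkeeping that the relevant endpoints genuinely lie in a common class (using that leaves are edges of class hulls and that $\sim$-classes partition $\uc$) and the verification that the wedge hypothesis yields $\si_d(x)=\si_d(y)$ with $x\neq y$; the periodicity-implies-bijection step and the composition-injectivity step are elementary.
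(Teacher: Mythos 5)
Your proposal is correct and follows essentially the same route as the paper: both arguments place the critical leaf or wedge inside a single finite $\approx_\lam$-class $\g$, note that periodicity of the endpoint/vertex forces $\si_d^n(\g)=\g$, and derive a contradiction from finiteness (the paper phrases it as $|\si_d(\g)|<|\g|$ being incompatible with $\si_d^n(\g)=\g$, which is exactly your injectivity-of-the-first-factor argument in cardinality form). The paper compresses this into two sentences, but the content is identical.
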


\begin{proof}
Suppose that $A$ is either a critical wedge or a critical leaf which
contains a periodic point of period $n$. Then $A$ is contained in a
finite class $\g$ such that $|\si_d(\g)|<|\g|$ while on the other
hand $\si_d^n(\g)$ must coincide with $\g$, a contradiction.
\end{proof}

The exact inverse of Lemma~\ref{l-qprop} fails. However it turns out
that proper laminations are very close to q-laminations. To show
this we need a few technical definitions.

\begin{dfn}\label{d:wander}
Suppose that $A$ is a polygon with vertices in $\uc$. It is said to be
\emph{$d$-wandering} if for any $m\ne n$ we have $\ch(\si^m_d(A\cap
\uc))\cap \ch(\si^n_d(A\cap \uc))=\0$.
\end{dfn}

In Definition~\ref{d:wander} we do not require that $A$ be a part of
some lamination or even that the circular orientation of vertices of
$A$ be preserved under $\si_d$. Still, Childers was able to
generalize Kiwi's results \cite{kiw02} and prove in \cite{chi07}
that $A$ cannot have too many vertices.

\begin{thm}[\cite{kiw02, chi07}]\label{t:kiwander}
Suppose that $A$ is a polygon with more than $d^d$ vertices. Then it is
not $d$-wandering.
\end{thm}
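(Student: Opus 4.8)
The plan is to exploit the rigidity forced by the disjointness of all forward images and convert it into a recursive count. Write $V=A\cap\uc$, $k=|V|$, and $A_n=\ch(\si_d^n(V))$, so that the $A_n$ are pairwise disjoint convex polygons inscribed in $\uc$. The starting observation is a one-step \emph{clustering} statement: since $A_1=\ch(\si_d(V))$ is connected and disjoint from $A_0=\ch(V)$, it lies in a single component of $\disk\sm A_0$, i.e. in the lune cut off by one edge of $A_0$. Hence all of $\si_d(V)$ lies in one complementary arc $J_0$ of $A_0$ (the arc subtended by that edge). Consequently $V\subset\si_d^{-1}(J_0)$, and $\si_d^{-1}(J_0)$ is a union of $d$ arcs $K_1,\dots,K_d$, each of length $|J_0|/d$, on each of which $\si_d$ is an orientation-preserving homeomorphism onto $J_0$. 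This partitions $V$ into at most $d$ \emph{clusters} $V_m=V\cap K_m$, each mapped order-isomorphically into $J_0$. The fact that at most $d$ clusters are available at each stage, compounded over $d$ stages, is what one expects to produce the bound $d^d$.

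Next I would set up the recursion. Because the forward orbit of a $d$-wandering polygon is again $d$-wandering, $\si_d(V)$ is the vertex set of the $d$-wandering polygon $A_1$, to which the clustering statement applies verbatim: its vertices lie in the $d$ preimage-arcs of a single complementary arc of $A_1$. Iterating, each vertex $v\in V$ acquires an \emph{itinerary} $(m_0(v),m_1(v),\dots)$ with each $m_n(v)\in\{1,\dots,d\}$, where $m_n(v)$ records which of the $d$ clustering arcs contains $\si_d^n(v)$ at stage $n$. The goal is then to show that the first $d$ symbols of the itinerary already determine $v$ uniquely inside $V$; this immediately yields $k\le d^d$.

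The heart of the matter, and the step I expect to be the main obstacle, is proving this injectivity of the length-$d$ itinerary. Two distinct vertices $v,v'$ of $V$ sharing a common initial itinerary travel through the \emph{same} clustering arc at every stage for as long as the itineraries agree, so that $\si_d^n(v)$ and $\si_d^n(v')$ refuse to separate; the only way the convex hulls $A_n$ can then remain pairwise disjoint is that at each such stage a critical arc (a component of $\si_d^{-1}$ carrying the large complementary gap) sits between them. Since $\si_d$ offers only $d-1$ such critical slots per step, a careful bookkeeping should show that the agreement cannot persist for $d$ consecutive steps without forcing two of the $A_n$ to meet, contradicting the wandering hypothesis. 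Making this bookkeeping precise—tracking how the $d$ clusters can merge under $\si_d$ and how many times a ``critical folding'' can be spent before two points are pried apart—is exactly the combinatorial counting carried out by Kiwi \cite{kiw02} and generalized by Childers \cite{chi07} to arbitrary polygons (dropping the assumption that circular orientation be preserved). It is their argument that I would invoke to finish the count and extract the explicit bound $d^d$.
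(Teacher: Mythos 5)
The first thing to say is that the paper contains no proof of this statement to compare yours against: as the bracketed attribution in the theorem indicates, it is imported from Kiwi \cite{kiw02} as generalized by Childers \cite{chi07}, the only supporting text being the remark that Childers proved ``$A$ cannot have too many vertices.'' Your preliminary reductions are correct as far as they go: since $\ch(\si_d(V))$ is connected and disjoint from $\ch(V)$, it lies in one component of $\disk\sm\ch(V)$, so $\si_d(V)$ lies in a single complementary arc $J_0$; the preimage $\si_d^{-1}(J_0)$ is indeed a union of $d$ arcs, each mapped homeomorphically onto $J_0$; the wandering property does pass to every $A_n$, so each vertex acquires an itinerary in $\{1,\dots,d\}$; and injectivity of the length-$d$ itinerary map would indeed give $|V|\le d^d$.

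But that injectivity claim \emph{is} the theorem, and you do not prove it --- you explicitly delegate it to the references (``It is their argument that I would invoke to finish the count''). The heuristic you offer in its place, that only $d-1$ ``critical slots'' per step are available to pry apart two vertices sharing an itinerary, is not an argument: nothing in your setup shows that two vertices travelling through the same preimage arc for $d$ consecutive steps force two of the hulls $A_m$, $A_n$ to intersect. Disjointness of the $A_n$ constrains the full vertex sets, not a pair of points inside one cluster, and a priori two points can share a cluster for arbitrarily many steps. Moreover, it is not evident that the counting actually carried out in \cite{kiw02} and \cite{chi07} takes the form of your itinerary statement --- Kiwi's argument is organized around wandering orbit portraits and their critical values under an orientation-preservation hypothesis, and Childers' contribution is precisely the machinery needed to remove that hypothesis --- so invoking ``their argument'' to close your particular reduction is not a routine citation; you would have to verify that their bound really specializes to your injectivity claim. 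As it stands, your text is, like the paper's, ultimately a citation of \cite{kiw02, chi07} rather than a proof; that matches the paper's treatment, but as a self-contained argument it has a genuine gap at its central step.
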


In Definition~\ref{d:wander} we assume that images of $A$ have
pairwise disjoint convex hulls. As Lemma~\ref{l:noconcat} shows, one
can slightly weaken this condition and still obtain useful
conclusions.

\begin{lem}\label{l:noconcat}
Suppose that one the following holds.

\begin{enumerate}

\item $A$ is a polygon, $\partial(A)\subset \uc$, and for any $m\ne
    n$ the interiors of the convex hulls $\ch(\si_d^m(A\cap \uc))$ and
    $\ch(\si_d^n(A\cap \uc))$ are disjoint;

\item $A$ is a chord of $\uc$ and for any $m\ne n$ two image chords
    $(\si^*_d)^m(A)$ and $(\si^*_d)^n(A)$ are disjoint inside
    $\disk$.

\end{enumerate}

Then, if $\si_d^n|_{\partial(A)}$ is one-to-one for all $n$, then
either $A$ is wandering, or $A$ is a chord with a preperiodic
endpoint.

\end{lem}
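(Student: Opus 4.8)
The plan is to treat the chord case (2) as the heart of the matter and to reduce the polygon case (1) to it. Observe first that if $A$ is a polygon with vertices on $\uc$ whose iterated images have pairwise disjoint interiors, then the edges of $A$ are chords whose iterated images are pairwise disjoint inside $\disk$: two inscribed convex polygons with disjoint interiors can meet only along their common boundary, so a transversal crossing of an edge of $(\si^*_d)^m(A\cap\uc)$ with an edge of $(\si^*_d)^n(A\cap\uc)$ inside $\disk$ would put a wedge of both interiors near the crossing point, a contradiction. Hence every edge of $A$ satisfies the hypotheses of case (2), and $\si_d^n$ is one-to-one on its two endpoints. So I would first prove the statement for a chord and then read off the polygon case.

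So let $A=\ol{xy}$ be a chord as in (2) and suppose $A$ is \emph{not} wandering. Since all images $(\si^*_d)^i(A)$ are non-degenerate (by injectivity of $\si_d^n$ on $\{x,y\}$) and pairwise non-crossing inside $\disk$, the failure of wandering must come from two images sharing an endpoint on $\uc$: there are $m<n$ and $s,t\in\{x,y\}$ with $\si_d^m(s)=\si_d^n(t)$. Put $k=n-m$ and $g=\si_d^k$. If $s=t$ then $\si_d^m(s)$ is $g$-fixed, hence periodic, so $s$ is preperiodic and we are done. If $s\ne t$, say $\si_d^m(x)=\si_d^n(y)$, write $a=\si_d^m(x)$, $b=\si_d^m(y)$, so $a=g(b)$; setting $q_{-1}=b$ and $q_j=g^{\,j}(a)$ for $j\ge 0$, the images $(\si^*_d)^{m+jk}(A)$ form an ordered concatenation of chords $\ol{q_{j-1}q_j}$, pairwise disjoint inside $\disk$, with $g(\ol{q_{j-1}q_j})=\ol{q_jq_{j+1}}$. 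If the $g$-orbit of $a$ is finite, then $a$, hence $x$, is preperiodic and we are done.

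It remains to rule out an infinite orbit, and this is the one genuinely technical step. The infinite, non-crossing concatenation $\ol{q_{-1}q_0}\,\ol{q_0q_1}\cdots$ must spiral to a point: letting $I_j$ be the open arc of $\uc$ with endpoints $q_{j-1},q_j$ containing no earlier $q_i$, non-crossing forces $q_{j+1}\in I_j$ and $I_{j+1}\subset I_j$, so the nested arcs shrink to a single point $q_\infty$ (a positive-length limit arc would produce two accumulation points of the $q_j$ and hence crossings), and $q_j\to q_\infty$. By continuity $g(q_\infty)=q_\infty$, so $q_\infty$ is a fixed point of the \emph{expanding} map $g=\si_d^k$. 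But then $|g(q)-q_\infty|\ge d^{\,k}|q-q_\infty|$ for $q$ near $q_\infty$, so $|q_{j+1}-q_\infty|>|q_j-q_\infty|$ once $q_j$ is close enough, contradicting $q_j\to q_\infty$ with $q_j\ne q_\infty$. Hence the orbit is finite and $x$ is preperiodic; this establishes case (2). Verifying that the nested arcs genuinely shrink to a point is the main obstacle.

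For the polygon case (1), suppose $A$ is not wandering. As above, disjoint interiors together with injectivity of $\si_d^n$ force the failure of wandering to occur at a shared \emph{vertex}: there are $m<n$ and vertices $u,w$ of $A$ with $p=\si_d^m(u)=\si_d^n(w)$. Applying $\si_d^{\,jk}$ ($k=n-m$, $g=\si_d^k$) shows that the polygons $B_j=(\si^*_d)^{m+jk}(A\cap\uc)$ have pairwise disjoint interiors while $B_j$ and $B_{j+1}$ share the vertex $g^{\,j}(p)$. Running the same spiral-and-expansion argument on this chain of vertex-sharing polygons, the shared vertices $g^{\,j}(p)$ must converge to a repelling periodic point of $\si_d$, which is impossible unless the orbit of $p$ is finite, i.e. $u$ is preperiodic; and in that case infinitely many $B_j$ share one common periodic vertex $\pi$, so the disjoint angular sectors they occupy at $\pi$ shrink to zero, forcing the other vertices of the $B_j$ to converge to $\pi$ and hence (again by repulsion) to collide under $\si_d^i$, violating the injectivity of $\si_d^n$ on $\partial(A)$. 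This contradiction shows a genuine polygon must be wandering, which is exactly the conclusion in case (1), the alternative ``chord with preperiodic endpoint'' being unavailable. I expect the polygon bookkeeping — extracting the shared vertex and controlling the sectors — to be the main obstacle here, but it closely parallels the chord argument.
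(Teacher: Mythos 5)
Your chord case follows the paper's own route (the consecutive images form a non-crossing concatenation that spirals, and expansion kills the limit), but one justification in it is wrong. The parenthetical claim that a positive-length limit arc ``would produce two accumulation points of the $q_j$ and hence crossings'' is false: in that situation the chords $\ol{q_{j-1}q_j}$ are nested, each joining a point near one end of the limit arc to a point near the other end, and nested chords never cross. The correct dichotomy --- the one the paper states --- is that the chords converge either to a point of $\uc$ or to a non-degenerate chord $\ol{\al\be}$; in the latter case continuity (together with the fact that consecutive $q_j$ alternate between the two ends of the nested arcs) gives $g(\al)=\be$ and $g(\be)=\al$, so $\{\al,\be\}$ is a repelling cycle, and your own expansion estimate applied to $g^{2}$ at $\al$ contradicts $q_{2i}\to\al$. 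This flaw is repairable with the tool you already use, but as written the step fails, and it is exactly the case you cannot dismiss by a crossing argument.

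The genuine gap is the final step of the polygon case. From ``the disjoint angular sectors they occupy at $\pi$ shrink to zero'' you conclude that the other vertices of the $B_j$ converge to $\pi$. By the inscribed angle theorem this is false: a small angle of an inscribed polygon at the vertex $\pi$ forces the remaining vertices into a short arc of $\uc$, but that arc can lie anywhere on the circle (for instance near the point antipodal to $\pi$), not near $\pi$. Moreover, even granting accumulation at $\pi$, ``collide by repulsion'' is not a valid deduction: repulsion obstructs convergence of a forward orbit whose consecutive terms map to one another, not accumulation along a subsequence with gaps, and nothing forces two distinct vertices to actually collide. What is needed instead is the paper's dynamical recursion: arrange, by passing to a power $h=\si_d^{kq}$, that $\pi$ is a fixed vertex of every polygon in the chain; then $h$ carries the non-$\pi$ vertices of one polygon onto the non-$\pi$ vertices of the next (this is where injectivity of $\si_d^n|_{\partial(A)}$ enters), so the short arcs $J_a$ spanned by those vertices satisfy either $J_{a+1}=h(J_a)$, whence $|J_{a+1}|=d^{kq}|J_a|$, or $h(J_a)\ni\pi$, whence $|J_{a+1}|\ge 1-d^{kq}|J_a|$; both contradict the fact --- which the disjoint sectors do give you --- that $|J_a|\to 0$. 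Without this recursion the polygon case does not close. As a side remark, the paper also obtains the preperiodic vertex more economically than your ``spiral on polygons'': it applies the already-proved chord case to the chord $\ol{u\,\si_d(u)}$ joining a vertex of $A$ to its image, whose iterates lie in the pairwise-disjoint hulls, so hypothesis (2) applies directly.
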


\begin{proof}
Suppose that $A=\ol{pq}$ is a non-wandering chord and $p, q$ have
infinite orbits. We may assume that $\si^*_d(A)=\ol{qr}$ and
$\si_d(p)=q$. Set $A_k=(\si^*_d)^k(A)$. Take closures of the two
components of $\disk\sm A_k$. It follows from the assumptions that
$\bigcup_{i>k} A_i$ is contained in one of them. Hence $A_i$
converge to either a $\si_d$-fixed point on $\uc$ or a
$\si^*_d$-invariant chord. However this contradicts the fact that
$\si_d$ is repelling.

Suppose now that $A$ is an non-wandering polygon. We may assume
that $\si_d^*(A)$ and $A$ intersect either (1) at a common vertex $x$, or
(2) along a common edge $\ell=\ol{xy}$. Choose the vertex $u$ of $A$
with $\si_d(u)=x$ and consider the chord $\ol{ux}$. The chord $\ol{ux}$
satisfies the assumptions of the theorem and is non-wandering. Then by
the above $u$ is preperiodic. We may assume that $u$ is  fixed.
Consider the two edges $\ol{uv_0}$ and $\ol{uw_0}$ of $A$ and the arc
$[v_0,w_0]$ in $\uc$ not containing $u$. Similarly, for each $n$ set
$v_n=\si_d^n(v_0)$ and $w_n=\si_d^n(w_0)$. Since the interiors of the
convex hulls of $\si_d^n(A)$ are pairwise disjoint, the open arcs
$(v_n,w_n)$ are also pairwise disjoint and hence their diameter must
converge to zero, a contradiction with the fact that $\si_d$ is
expanding.
\end{proof}

\begin{lem}\label{simple-wedges}
Suppose that $\lam$ is a Thurston invariant lamination. Then there are
at most finitely many points $x$ such that there is a critical leaf
with an endpoint $x$ or a critical wedge with a vertex $x$.
\end{lem}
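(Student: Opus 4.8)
The plan is to bound, in terms of $d$ alone, the total number of critical leaves and critical wedges; finitely many such objects clearly have only finitely many endpoints and vertices. Fix any point $c\in\uc$ and let $p_1<\dots<p_d$ be its $d$ preimages under $\si_d$; they cut $\uc$ into the $d$ half-open arcs $A_i=[p_i,p_{i+1})$ (indices mod $d$), each of length $\frac{1}{d}$ and each mapped bijectively onto $\uc$ by $\si_d$. The first observation is that both a critical leaf and a critical wedge determine a \emph{critical chord}: for a critical leaf $\ell=\ol{ab}$ this is $\ell$ itself, while for a critical wedge with vertex $v$ and leaves $\ol{va},\ol{vb}$ one takes the chord $\ol{ab}$ joining the two points $a,b$ with $\si_d(a)=\si_d(b)$. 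In either case $\si_d(a)=\si_d(b)$, so by injectivity of $\si_d$ on each $A_i$ the endpoints $a,b$ lie in two \emph{distinct} arcs $A_i,A_j$.

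Next I would prove two geometric claims. \textbf{(1)} Any two distinct critical chords whose endpoints lie in the same unordered pair of arcs $\{A_i,A_j\}$ must cross inside $\disk$. This is an orientation computation: writing $a,a'\in A_i$ and $b,b'\in A_j$ with $\si_d(a)=\si_d(b)$, $\si_d(a')=\si_d(b')$, the fact that $\si_d|_{A_i}$ and $\si_d|_{A_j}$ are orientation-preserving bijections forces $a<a'\iff b<b'$, whence the four points occur in the interleaved circular order and the chords cross (this is the content of Proposition~\ref{abcount} applied to $a$- and $b$-points). \textbf{(2)} The critical chords coming from \emph{distinct} critical objects do not cross: if $\ol{ab}$ and $\ol{a'b'}$ crossed, then the paths of leaves realizing the two objects (a single leaf, or the broken chord $\ol{av}\cup\ol{vb}$ of a wedge) would have to intersect, and since leaves of a lamination meet only at common endpoints, this forces two distinct leaves to cross, which is impossible. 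The wedge case of (2) is the delicate point: the chord $\ol{ab}$ is \emph{not} itself a leaf, so one argues via the Jordan curve $\ol{av}\cup\ol{vb}$ together with a boundary arc, which separates the two arcs of $\uc$ cut off by $\{a,b\}$, in order to locate the forced crossing on one of the genuine leaves $\ol{va},\ol{vb}$.

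Combining (1) and (2): the critical chords form a family of pairwise non-crossing chords, each joining two distinct arcs, with at most one chord per unordered pair $\{A_i,A_j\}$; hence there are at most $\binom{d}{2}$ distinct critical chords. Finally I would show that each critical chord $\ol{ab}$ supports only boundedly many critical objects: it is the chord of at most one critical leaf, and of at most one critical wedge on each of the two sides of $\ol{ab}$ (two wedge vertices $v_1,v_2$ on the same side would again produce crossing leaves $\ol{av_2}$ and $\ol{bv_1}$). Thus there are at most $3\binom{d}{2}$ critical leaves and wedges in all, and therefore only finitely many points $x$ that are an endpoint of a critical leaf or the vertex of a critical wedge.

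The main obstacle, as indicated, is claim (2) for wedges: because a wedge's critical chord is an auxiliary chord rather than a leaf of $\lam$, both the non-crossing property and the ``boundedly many wedges per chord'' bound cannot be read off directly from disjointness of leaves, and they require the elementary but careful planarity argument that converts a hypothetical chord crossing into a forbidden crossing of two actual leaves of $\lam$. Everything else is bookkeeping with the arc decomposition; I do not expect to need gap invariance or the wandering-polygon bound (Theorem~\ref{t:kiwander}) here, since critical objects collapse rather than wander.
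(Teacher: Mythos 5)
Your reduction to critical chords and your claim (1) are correct, but claim (2) --- which you yourself flag as the delicate point --- is false, and the proof collapses there. The interleaving of $\{a,b\}$ and $\{a',b'\}$ does force the two paths of leaves to intersect, but a lamination allows leaves to meet at a common endpoint, and that is exactly how the intersection can occur: distinct critical objects may share endpoints. Concretely, for $d=3$ take the four pairwise non-crossing chords $\ol{av}$, $\ol{vb}$, $\ol{vw}$, $\ol{wb'}$ with $a=0$, $v=1/10$, $b=1/3$, $w=2/5$, $b'=13/30$. Then $\ol{av}\cup\ol{vb}$ is a critical wedge with vertex $v$ (since $\si_3(a)=\si_3(b)=0\ne \si_3(v)=3/10$), and $\ol{vw}\cup\ol{wb'}$ is a critical wedge with vertex $w$ (since $\si_3(v)=\si_3(b')=3/10\ne\si_3(w)=1/5$); no two leaves cross, yet the critical chords $\ol{ab}$ and $\ol{vb'}$ have interleaved endpoints $0<1/10<1/3<13/30$ and hence cross inside $\disk$ --- as indeed they must by your own claim (1), since both join the arc $[0,1/3)$ to the arc $[1/3,2/3)$. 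So the family of critical chords need not be pairwise non-crossing, the ``at most one chord per pair of arcs'' count fails, and no bound of the form $3\binom{d}{2}$ follows from planarity and criticality alone; chaining wedges in this fashion (each new wedge hanging off the previous vertex) produces arbitrarily long strings of pairwise non-crossing leaves whose critical chords successively cross. Any correct argument must use the invariance of $\lam$, not just the disjointness of its leaves.

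This is also where your route genuinely diverges from the paper's. The paper does not attempt a universal bound: it notes that there are clearly finitely many critical leaves, then assumes there are infinitely many wedge vertices $x_i$ and derives a contradiction. Using that fibers of $\si_d$ have exactly $d$ points, it passes to an infinite subfamily in which the image leaves $\ol{\si_d(a_i)\si_d(x_i)}$ are pairwise distinct and the points $\si_d(x_i)$ are pairwise distinct, asserts that the critical chords $\ol{a_ib_i}$ of such a family are pairwise disjoint in $\disk$, and concludes by the impossibility of infinitely many pairwise disjoint critical chords (each subtends an arc of length at least $1/d$). Note that the paper's pairwise-disjointness assertion glosses over the same shared-endpoint configurations that break your claim (2); but the infinite-family setting leaves room to refine the subfamily further (e.g., discarding wedges hanging on one another), whereas your finite universal bound has no such slack. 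The moral difference: the paper only needs ``an infinite pairwise disjoint family of critical chords cannot exist,'' which is robust, while your argument needs every pair of critical chords from distinct objects to be non-crossing, which is simply not true.
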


\begin{proof}
Clearly there are at most finitely many critical leaves. Hence we
may suppose that there are infinitely many points $x_i$ such that
there are leaves $\ol{a_ix_i}$, $\ol{b_ix_i}$ with
$\si_d(a_i)=\si_d(b_i)\ne \si_d(x_i)$; we may assume that the sets
$\ol{\si_d(a_i)\si_d(x_i)}$ are all distinct and the points
$\si_d(x_i)$ are all distinct. Clearly, the chords $\ol{a_ib_i}$ and
$\ol{a_jb_j}$ are disjoint inside $\disk$. Since each such chord is
critical, we have a contradiction.
\end{proof}

Let $E(v)$ be the set of \emph{all} endpoints of leaves with a common
endpoint $v$ (if $E(v)$ accumulates upon $v$ we include $v$ in
$E(v)$). Then $E(v)$ is a closed set. Let $C(v)$ be the family of
 \emph{all} leaves connecting
$v$ and a point of $E(v)$ (it might include $\{v\}$ as a degenerate
leaf).

\begin{lem}\label{finite-cones}
Suppose that $v$ is a point with infinite orbit and $\lam$ is a
Thurston $d$-invariant lamination. Then there are at most finitely many leaves
with an endpoint $v$.
\end{lem}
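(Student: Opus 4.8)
The plan is to argue by contradiction: assuming that infinitely many leaves share the endpoint $v$, I want to manufacture two leaves of $\lam$ that cross inside $\disk$, which is forbidden. Note that the wandering-polygon machinery (Theorem~\ref{t:kiwander}, Lemma~\ref{l:noconcat}) does not apply directly here, since a fan $C(v)$ of leaves at a single point need not be $d$-wandering; so I would instead use expansion of $\si_d$ together with the lamination property. Since there are only finitely many critical leaves and finitely many critical wedges (Lemma~\ref{simple-wedges}), I may discard finitely many leaves of $C(v)$ and assume every remaining leaf at $v$ is noncritical. As $E(v)$ is an infinite closed subset of $\uc$ it has an accumulation point $u$; passing to a subsequence I obtain leaves $\ell_i=\ol{vw_i}$ with $w_i\to u$ monotonically, converging to a limit leaf $m=\ol{vu}$ from one side, so that consecutive leaves $\ell_i,\ell_{i+1}$ cut off regions $W_i$ with pairwise disjoint interiors whose base arcs $I_i=(w_i,w_{i+1})$ are disjoint subarcs of $\uc$ and hence satisfy $\diam(I_i)\to0$.

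First I would exploit expansion of $\si_d$. Because $\diam(I_i)\to0$, for each $i$ there is a least $n_i$ with $\si_d^{n_i}|_{\ol{I_i}}$ one-to-one and $|\si_d^{n_i}(I_i)|\ge 1/d$ (the previous image still has length $<1/d$, hence $\si_d$ is injective on it). Necessarily $n_i\to\infty$, so after passing to a further subsequence the $n_i$ are strictly increasing, and since $v$ has infinite orbit the points $q_i:=\si_d^{n_i}(v)$ are pairwise distinct. Using Lemma~\ref{simple-wedges} once more (and shrinking the subsequence if needed) I would check that the images $\si_d^{n_i}(\ell_i)$ and $\si_d^{n_i}(\ell_{i+1})$ remain nondegenerate, so they are genuine leaves $\ol{q_ia_i}$ and $\ol{q_ib_i}$ of $\lam$ with $a_i=\si_d^{n_i}(w_i)$, $b_i=\si_d^{n_i}(w_{i+1})$; together with the arc $I_i^*=\si_d^{n_i}(I_i)$ of length $\ge1/d$ these two leaves bound a ``cap'' $C_i$ with vertex $q_i$.

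The contradiction would then come from a pigeonhole argument on $\uc$. Since at most $d$ pairwise disjoint arcs of $\uc$ can each have length $\ge1/d$, the infinite Ramsey theorem applied to the arcs $I_i^*$ produces an infinite pairwise-intersecting subfamily; fix two of them, $I_i^*$ and $I_j^*$, with $q_i\neq q_j$. A direct case analysis of the (at most six) boundary points of $C_i$ and $C_j$ on $\uc$ shows that, whether $I_i^*$ and $I_j^*$ interleave or one contains the other, one bounding leaf of $C_i$ must cross one bounding leaf of $C_j$ inside $\disk$; for instance, if $I_j^*\subset I_i^*$ while $q_j$ lies in the complementary arc of $I_i^*$, then the leaf from $q_j$ to the endpoint of $I_j^*$ nearer to $a_i$ is forced to cross the side $\ol{q_ia_i}$ of $C_i$. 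This crossing contradicts the fact that $\lam$ is a lamination, completing the argument.

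The hard part will be the bookkeeping in the last step that turns ``two intersecting caps with distinct vertices'' into an honest crossing of two leaves. The subtlety is that $\si_d^{n_i}$ is one-to-one only on $\ol{I_i}$, not on all of $\uc$, so I must control the position of the vertex $q_i=\si_d^{n_i}(v)$ relative to its own base arc $I_i^*$ — ideally showing $q_i\notin\ol{I_i^*}$ for cofinitely many $i$, so that $C_i$ is a genuine cap — and then rule out the boundary configurations in which two intersecting base arcs fail to force a crossing. I expect this to require either a more careful choice of the exponents $n_i$ or a short separate treatment of the case $u=v$ (leaves accumulating at $v$ itself), where the same expansion idea applies but the limiting object is the degenerate leaf $\{v\}$ in place of the nondegenerate leaf $m$.
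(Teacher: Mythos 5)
Your reduction steps are mostly sound: discarding the finitely many critical leaves via Lemma~\ref{simple-wedges}, the expansion argument producing least times $n_i$ with $\si_d^{n_i}|_{\ol{I_i}}$ injective and $|\si_d^{n_i}(I_i)|\ge 1/d$, the distinctness of the vertices $q_i$, and the Ramsey selection of pairwise intersecting arcs $I_i^*$. The proof breaks at its crux: it is not true that two caps with intersecting base arcs and distinct vertices must have crossing boundary leaves. Take the nested configuration $I_j^*\subset I_i^*$ with $q_j\in I_i^*\sm \ol{I_j^*}$, i.e.\ the whole cap $C_j$ --- vertex included --- sits over the arc $I_i^*$. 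Then $q_j, a_j, b_j$ all lie in $\ol{I_i^*}$, so both leaves $\ol{q_ja_j}$ and $\ol{q_jb_j}$ lie in $\ch(\ol{I_i^*})$, a region that meets the leaves $\ol{q_ia_i}$, $\ol{q_ib_i}$ only at the points $a_i$ and $b_i$; no crossing occurs. This is not one of the ``boundary configurations'' you defer to bookkeeping: an infinite family of pairwise \emph{nested} caps, each with base arc of length $\ge 1/d$ and with pairwise distinct vertices, is a combinatorially consistent picture, so no contradiction can be extracted from non-crossing of leaves, the lengths of the $I_i^*$, and the distinctness of the $q_i$ alone. Your proposal supplies no tool to exclude nesting (nor to settle the acknowledged question of whether $q_i\notin\ol{I_i^*}$ at all), and I do not see how to close either hole without genuinely new dynamical input.

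That missing input is exactly what you discarded at the outset. You set the wandering-polygon machinery aside because a fan at $v$ ``need not be $d$-wandering,'' but overcoming precisely this is the purpose of Lemma~\ref{l:noconcat}. The paper's proof runs as follows: by Lemma~\ref{simple-wedges} one may assume $v$ and all its images are not endpoints of critical leaves or vertices of critical wedges; if some image of $v$ landed in $E(v)$, Lemma~\ref{l:noconcat} (the chord case) would make $v$ preperiodic, contradicting the infinite orbit; now pick $A\subset E(v)$ with $d^d$ points and consider the polygon $\ch(A\cup\{v\})$. Its image polygons are spanned by fans of pairwise non-crossing leaves of $\lam$ based at distinct points of the orbit of $v$, so (using the previous step) their interiors are pairwise disjoint, and then Lemma~\ref{l:noconcat} (the polygon case) upgrades ``pairwise disjoint interiors'' to ``$d$-wandering,'' the alternative conclusion --- a chord with preperiodic endpoint --- being vacuous for a polygon. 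A wandering polygon with $d^d+1$ vertices contradicts Theorem~\ref{t:kiwander}. In short, wandering is the \emph{conclusion} of that lemma, not a hypothesis you must verify directly; the route you rejected is the one that works.
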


\begin{proof}
Let $E(v)$ be infinite. By Lemma~\ref{simple-wedges}, we may assume
that $v$ and all its images are not endpoints of critical leaves or
vertices of critical wedges. If $v$ ever maps to $E(v)$ then by
Lemma~\ref{l:noconcat} $v$ is preperiodic, a contradiction. Choose
$A\subset E(v)$ consisting of $d^d$ points. Consider $\ch(A\cup
\{v\})$. By the above for any $n\ne m$, the interiors of
$\ch(\si_d^n(A\cup \{v\}))$ and $\ch(\si_d^m(A\cup \{v\}))$ are disjoint.
Then by Lemma~\ref{l:noconcat} $\ch(A\cup \{v\})$ is wandering,
contradicting Lemma~\ref{t:kiwander}.
\end{proof}

\begin{lem}\label{noinfcon}
Suppose that $\lam$ is a Thurston $d$-invariant lamination and $A$ is a
concatenation of leaves $A=\bigcup \ol{x_ix_{i+1}}$, $i=0, 1, \dots$,
with $\ol{x_ix_{i+1}}\in\lam$ (the set $\partial(A)$ is infinite).
Then $A$ has preperiodic vertices.
\end{lem}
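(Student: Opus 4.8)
The plan is to argue by contradiction, assuming that the infinite concatenation $A=\bigcup \ol{x_ix_{i+1}}$ has no preperiodic vertices, so that every $x_i$ has an infinite orbit. The key structural fact I want to exploit is that $A$ is an infinite concatenation of leaves, hence $\partial(A)$ is an infinite set accumulating somewhere on $\uc$; I can extract from it a large finite sub-polygon and apply the wandering-polygon machinery developed above, principally Lemma~\ref{l:noconcat} and Theorem~\ref{t:kiwander}.

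First I would use Lemma~\ref{simple-wedges} to discard a finite exceptional set: only finitely many points are endpoints of critical leaves or vertices of critical wedges, so after passing to a tail of the concatenation I may assume that no vertex $x_i$ (and, after a further reduction, none of their images) is such a point. This guarantees that along the relevant pieces the map $\si_d$ restricted to the vertex set is one-to-one, which is exactly the hypothesis needed to invoke the wandering dichotomy. Next, from the infinitely many vertices $x_i$ I would select a finite collection of more than $d^d$ of them and form the polygon $B=\ch(\{x_{i_1},\dots,x_{i_N}\})$ with $N>d^d$. If I can show that $B$ is $d$-wandering in the sense of Definition~\ref{d:wander}, then Theorem~\ref{t:kiwander} yields an immediate contradiction, since a $d$-wandering polygon cannot have more than $d^d$ vertices.

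The crux is therefore establishing the wandering property, and this is where I expect the main obstacle. I would try to show that for $m\ne n$ the interiors of $\ch(\si_d^m(B\cap\uc))$ and $\ch(\si_d^n(B\cap\uc))$ are disjoint, so that the weaker hypothesis (1) of Lemma~\ref{l:noconcat} applies; then Lemma~\ref{l:noconcat} forces $B$ to be genuinely wandering (the preperiodic-endpoint alternative being excluded by our assumption that no vertex is preperiodic). The difficulty is that the forward images of the vertices of $A$ need not be the vertices of a single concatenation, and their convex hulls could in principle overlap even though the original leaves are pairwise non-crossing inside $\disk$. The natural way around this is to observe that, because the leaves $\ol{x_ix_{i+1}}$ form a nested concatenation converging to an accumulation point, their images under $\si_d^n$ remain organized (using the order-preservation results Corollary~\ref{cor-posor1} and the local injectivity of $\si_d$ away from critical objects), so that a recurrence $\ch(\si_d^m(B))\cap\ch(\si_d^n(B))\ne\0$ would propagate back along the concatenation to force either a critical wedge/leaf among the $x_i$ (excluded) or a genuine preperiodicity of some vertex (excluded).

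If the convex-hull-disjointness proves too delicate to obtain directly for the whole polygon $B$, a fallback is to work one chord at a time: pick two widely separated vertices $x_i,x_j$ of the concatenation and apply alternative (2) of Lemma~\ref{l:noconcat} to the single chord $\ol{x_ix_j}$, concluding that this chord is wandering or has a preperiodic endpoint; since preperiodic endpoints are excluded, the chord is wandering, and running this over a $d^d$-sized family of vertices and reassembling gives the wandering polygon needed for Theorem~\ref{t:kiwander}. Either route terminates in the same contradiction, so some vertex $x_i$ must in fact be preperiodic, which is the assertion of the lemma.
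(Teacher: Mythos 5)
Your strategy reproduces only the easy half of the paper's argument. The paper also begins by considering the hulls $B_n=\ch\left(\ol{\partial(\si_d^n(A))}\right)$ and, \emph{in the case where all these hulls have pairwise disjoint interiors}, argues essentially as you do: only finitely many critical chords can sit inside hulls with pairwise disjoint interiors, so along a tail all iterates of $\si_d$ are injective on the vertex sets, and then Lemma~\ref{l:noconcat} (with Theorem~\ref{t:kiwander}) gives a contradiction. Note, however, that the paper obtains this eventual injectivity \emph{from the disjointness assumption itself}, not from Lemma~\ref{simple-wedges}; your injectivity step is false as stated. Two vertices of $A$ (or of an image of $A$) can be identified by $\si_d$ via a critical \emph{chord} that is not a leaf of $\lam$: for instance, the image of a sub-chain on four consecutive vertices can close up into a triangle of leaves, identifying the two extreme vertices while producing no critical leaf and no critical wedge at any of the four vertices, so passing to a tail avoiding the finite exceptional set of Lemma~\ref{simple-wedges} does not exclude such identifications.

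The genuine gap is in the other case, which you dismiss in one sentence: when $\ch(\si_d^m(A\cap\uc))$ and $\ch(\si_d^n(A\cap\uc))$ overlap. You assert that such a recurrence ``propagates back along the concatenation to force either a critical wedge/leaf among the $x_i$ or genuine preperiodicity''; but this is precisely the assertion of the lemma, not an argument, and it is where the paper does all of its real work. In the paper, overlap of hulls yields only that some vertex maps onto another vertex, $\si_d^m(x_0)=x_n$ with $x_n\neq x_0$, which by itself is \emph{not} preperiodicity. The paper then takes the finite chain $C=\ol{x_0x_1}\cup\dots\cup\ol{x_{n-1}x_n}$ joining $x_0$ to $x_n$ and studies its forward images, which are concatenated to one another end-to-end: either some image of $C$ collapses to a point, in which case that point is periodic and $x_0$ is preperiodic, or (after refining $C$ so it has no self-intersections) the successive images meet only at their endpoints, so $\lim\si_d^k(C)$ is a single leaf or a point of $\uc$, contradicting expansion of $\si_d$. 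Nothing in your proposal substitutes for this step. Your fallback has the same defect: hypothesis (2) of Lemma~\ref{l:noconcat} requires that all forward images of the chord $\ol{x_ix_j}$ be pairwise disjoint inside $\disk$, which is exactly the disjointness you cannot assume (and $\ol{x_ix_j}$ need not be a leaf of $\lam$, so the lamination structure gives no help in verifying it); moreover, wandering of individual chords does not reassemble into the pairwise disjointness of the image hulls of the whole polygon that Theorem~\ref{t:kiwander} needs.
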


\begin{proof}
Consider the convex hulls of sets $\ol{\partial(A)}=B_0$,
$\ol{\partial(\si_d(A))}=B_1, \dots$. Suppose that all such convex hulls
have disjoint interiors. There are numbers $n$ such that $\si_d|_{B_n}$
is not one-to-one. This means that there is a critical chord $\ell_n$
inside the convex hull of $B_n$. Since we assume that it is the
interiors of sets $\ch(B_n)$ which are disjoint, one critical chord can
correspond to at most two sets $B_n$; otherwise two critical chords
$\ell_n$ and $\ell_m$ cannot intersect. It follows that there are at
most finitely many critical chords $\ell_i$ constructed as above and
that for large enough $n$ the map $\si_d|_{B_i}$ is one-to-one. By
Lemma~\ref{l:noconcat} this is impossible. Hence convex hulls of sets
$B_n$ have non-disjoint interiors which implies that we can make the
following assumption: there exists $m, n$ such that $\si_d^m(x_0)=x_n$.
We may also assume that $x_n\ne x_0$.

Denote the concatenation of leaves $\ol{x_0x_1}, \dots,
\ol{x_{n-1}x_n}$ by $C$. Then $\si_d(C)$ is a concatenation of leaves
attached to $C$ etc. If for some $k$ we have that $\si_d^k(C)$ is a point
we can choose the minimal such $k$ which implies that $\si_d^{k-1}(C)$ is a
concatenation of leaves whose image is one of its own vertices $y$.
Hence $y$ is periodic as desired. Otherwise we may assume that the
number of vertices of $C$ does not drop under application of the map
$\si_d$. Observe that $C$ may have self intersections. In this case we
may refine $C$ to get a concatenation with no self-intersections still
connecting $x_0$ and $x_n$.

We can optimize the situation even more. Indeed, it is not necessarily
so that $C$ only intersects itself when $\si_d(C)$ gets concatenated to
$C$ at $\si_d(x_0)=x_n$. Thus we may assume that $C$ is the shortest
subchain of leaves in $C$ which ever intersects itself. This implies
that if there are no preperiodic vertices of $C$ then the only way
images of $C$ may intersect is by being concatenated to each other at
their ends. It now follows that $\lim \si_d^n(C)$ is either a leaf in
$\lam$ or a point of $\uc$. In either case this contradicts that $\si_d$
is expanding.
\end{proof}

Recall that $\approx_\lam$ was the equivalence relation
defined by $x{\approx_\lam}y$ if and only if there exists a
\emph{finite} concatenation of leaves of $\lam$ connecting $x$ and
$y$. Theorem~\ref{t:nowander} specifies properties of
$\approx_\lam$.

\begin{thm}\label{t:nowander}
Let $\lam$ be a proper 
Thurston invariant lamination. Then
$\approx_\lam$ is an invariant laminational equivalence relation.
\end{thm}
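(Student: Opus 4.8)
The plan is to verify the five defining conditions (E1), (E2), (D1), (D3), (D4) of an invariant laminational equivalence relation for $\approx_\lam$, treating the finiteness condition (D4) as the crux and deriving the other conditions from it together with the fact that $\lam$ is Thurston invariant.

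First I would establish (D4), that every $\approx_\lam$-class $\g$ is finite, which I expect to be the main obstacle. Suppose $\g$ is infinite. Since the points of $\g$ are joined by finite concatenations of leaves, $\ch(\g)$ is an infinite gap whose boundary contains an infinite concatenation of leaves; Lemma~\ref{noinfcon} then forces $\g$ to have a preperiodic vertex. Because $\si_d$ is at most $d$-to-one, the forward images of an infinite class remain infinite classes, so after replacing $\g$ by a suitable image and passing to a power of $\si_d$ I may assume that $\g$ has a fixed vertex $v$ and that $\ch(\g)$ is periodic. The properness hypothesis now does the decisive work: analyzing the return map on $\partial(\g)$, if it identifies two endpoints of boundary leaves then a critical leaf or a critical wedge appears in the orbit of $\ch(\g)$ at a periodic point, directly contradicting properness (here Lemma~\ref{simple-wedges} guarantees there are only finitely many such configurations to track); if instead the return map is injective on the boundary, then the arcs cut off by consecutive vertices must shrink under iteration, contradicting the expansion of $\si_d$ exactly as in the proof of Lemma~\ref{l:noconcat}. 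Alternatively, selecting $d^d+1$ vertices of $\g$ and invoking the wandering dichotomy of Lemma~\ref{l:noconcat} together with the Kiwi--Childers bound (Theorem~\ref{t:kiwander}) yields the same contradiction. Either route shows $\g$ is finite, which is (D4).

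With finiteness in hand, the remaining conditions follow more routinely. For forward invariance (D1): leaves map to leaves or to points, so a finite concatenation joining $x$ to $y$ maps to a finite concatenation (or a point) joining $\si_d(x)$ to $\si_d(y)$, whence $\si_d(\g)$ lies in a single class; conversely the backward invariance of leaves built into a Thurston invariant lamination (each leaf admits $d$ disjoint preimage leaves) shows that every point equivalent to $\si_d(x)$ is itself the image of a point equivalent to $x$, so $\si_d(\g)$ is exactly a class. Unlinkedness (E2) is immediate: two distinct classes with linked convex hulls would force either a crossing of boundary leaves inside $\disk$ (impossible in a lamination) or a concatenation joining them, which would merge them into one class. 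The covering condition (D3) for classes with more than two points follows from Thurston gap invariance (Definition~\ref{dfn-Thurston}(3)), which $\lam$ satisfies: on the boundary of the gap $\ch(\g)$ the map $\si^*_d$ is a positively oriented composition of a monotone and a covering map, and restricting to the vertices gives precisely the positively oriented covering required.

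Finally, for closedness (E1) I would argue that if $x_n\to x$ and $y_n\to y$ with $x_n\approx_\lam y_n$, then, using that $\lam^*$ is closed and that the classes are finite, the convex hulls of the classes containing the pairs and their connecting concatenations can be passed to the Hausdorff limit (the relevant boundary leaves converge to leaves of $\lam$) to produce a concatenation joining $x$ to $y$; hence $x\approx_\lam y$ and the graph of $\approx_\lam$ is closed. The hard part throughout is the finiteness argument of the second paragraph, where the interplay between the wandering bound, the exclusion of infinite concatenations, and properness must be combined to rule out infinite periodic gaps.
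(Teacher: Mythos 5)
Your proposal contains two genuine gaps, both at the places where the paper has to work hardest. First, in the finiteness argument (D4): the reduction from ``$\g$ is infinite'' to ``$\ch(\g)$ is an infinite gap whose boundary contains an infinite concatenation of leaves'' is unjustified. An infinite class could a priori be an infinite \emph{cone}: infinitely many leaves sharing one endpoint. Such a class contains no infinite concatenation at all (any arc of leaves through the common endpoint has at most two leaves), and $\ch(\g)$ need not be a gap of $\lam$ since its edges need not be leaves. The paper must first rule out infinite cones (via Lemma~\ref{finite-cones}, properness and expansion), and only then extract an infinite concatenation from an infinite class by a K\"onig-type argument (infinitely many minimal connecting chains from a base point, finitely many choices of first leaf, etc.). Moreover, in your periodic case the claim that a non-injective return map produces ``a critical leaf or a critical wedge \ldots{} at a periodic point'' is exactly the step that needs proof: the identification could occur at a vertex of $\g$ that is \emph{not} periodic, where properness says nothing. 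The paper avoids this by applying properness only at points already known to be fixed: at the fixed vertex $x$ supplied by Lemma~\ref{noinfcon}, the finitely many leaves at $x$ map to leaves at $x$, properness forces this map to be injective, hence a permutation with periodic endpoints, and an orientation argument upgrades ``periodic'' to ``fixed''; induction along the concatenation then makes \emph{every} leaf of the infinite concatenation fixed, contradicting the finiteness of the fixed-point set of $\si_d$. Your alternative ``injective return map contradicts expansion'' is also not sound as stated: an expanding map can be injective on an infinite invariant set (e.g., on a minimal Cantor set), so the contradiction must use the leaves, not expansion alone.

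Second, your (D3) argument assumes that $\ch(\g)$ is a gap of $\lam$, so that Thurston's gap invariance (Definition~\ref{dfn-Thurston}(3)) applies to its boundary. That is false in general: an edge of $\ch(\g)$ need not be a leaf of $\lam$ (the class may be spanned by a tree of leaves whose convex hull has ``free'' edges bordering an infinite gap), so gap invariance gives no direct information about $\Bd(\ch(\g))$. The paper spends roughly half of its proof here: it first shows one may \emph{modify} $\lam$ by adding each missing edge of $\ch(\g)$ together with its grand orbit (this stays Thurston invariant), then distinguishes for each edge whether it is approached from outside by leaves or borders an infinite gap, proves that either all edges of $\ch(\g)$ are critical or none are, and finally verifies positive orientation by a separate argument in each case. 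Your remaining items are fine and essentially match the paper --- (D1) via pullback leaves using backward invariance, (E2) by unlinkedness of connected unions of leaves, and (E1) by passing disjoint convex hulls to a Hausdorff limit --- but the two gaps above concern precisely the points where properness and the Thurston (rather than sibling) hypothesis must be exploited, so the proposal as written does not constitute a proof.
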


\begin{proof}
Let us show that any point $v\in \uc$ is the endpoint of at most
finitely many leaves of $\lam$. Otherwise by Lemma~\ref{finite-cones}
we may assume that $v$ is fixed. Take the infinite invariant set
$E'=E(v)\cup\{v\}$. Since $\si_d$ is expanding, $E'$ contains points $x,
x'$ with $\si_d(x)=\si_d(x')$ contradicting the fact that $\lam$ is
proper.

Suppose next that $A$ is an infinite concatenation of leaves $A=\bigcup
\ol{x_ix_{i+1}}$, $i=0, 1, \dots$, with $\ol{x_ix_{i+1}}\in\lam$. By
Lemma~\ref{noinfcon} we may assume that $x_0=x$ is fixed. Let us show
that if $\ell_1=\ol{xe_1},$ $\dots,$ $\ell_k=\ol{xe_k}$ are \emph{all}
the leaves with the endpoint $x$ then $\si_d(e_i)=e_i$ for all $i$. Since
$\lam$ is proper, the leaves $\ell_1, \dots, \ell_k$ have distinct
non-degenerate $\si^*_d$-images. Hence all points $e_i, 1\le i\le k$
are periodic. If $k=1$ then $e_1$ is fixed and we are done. If $k=2$
then $\ell_1, \ell_2$ are edges of some gap $G$ and the fact that the
orientation is preserved under $\si_d$ implies that both $\ell_1,
\ell_2$ are fixed. Suppose that $k\ge 3$. We may assume that
$e_1<e_2<\dots<e_k$. Since gaps map to gaps and the orientation is
preserved on them, the fact that $x$ is fixed implies that then
$\si_d(e_1)<\si_d(e_2)<\dots<\si_d(e_k)$ and hence in fact
$\si_d(e_i)=e_i, 1\le i\le k$. Thus, the leaf $\ol{x_0x_1}$ is fixed,
the leaf $\ol{x_1x_2}$ is fixed and, by induction, all the leaves
$\ol{x_ix_{i+1}}$ are fixed, a contradiction.

By the above $\lam$ contains no infinite cones and no infinite
concatenations of leaves. Let us show that all $\approx_\lam$-classes
are finite (and hence closed). Otherwise let $E$ be an infinite
$\approx_\lam$-class and let $x_0\in E$. For each $y\in E$ fix a
concatenation of leaves $L_y$ from $x_0$ to $y$ containing the least
number of leaves. Then there are infinitely many sets $L_y, y\in E$.
Since there are only finitely many leaves of $\lam$ with an endpoint
$x_0$, we can choose $x_1\in E$ so that there are infinitely many sets
$L_y, y\in E$ whose first leaf is $\ol{x_0x_1}$. Since there only
finitely many leaves of $\lam$ with an endpoint $x_1$ we can choose
$x_2\in E$ so that there are infinitely may sets $L_y, y\in E$ whose
second leaf is $\ol{x_1x_2}$. Continuing in this manner we will find an
infinite concatenation of leaves of $\lam$, a contradiction (by the
choice of sets $L_y, y\in E$ the points $x_0, x_1, \dots$ are all
distinct).

Take convex hulls of $\approx_\lam$-classes. Clearly, these convex
hulls are pairwise disjoint. It follows that if a non-constant sequence
of $\approx_\lam$-classes converges, then it converges to a leaf of
$\lam$ or a point. Hence $\approx_\lam$ is a closed equivalence
relation. To show that $\approx_\lam$ is invariant and laminational we
have to prove that $\approx_\lam$-classes map \emph{onto}
$\approx_\lam$-classes in a covering way (i.e., we need to check
conditions (D1) and (D3) of Definition~\ref{d-lameq}). Let us show that
for any $x\in \uc$ the $\approx_\lam$-class maps onto the
$\approx_\lam$-class of $\si_d(x)$. Indeed, let $y$ belong to the
$\approx_\lam$-class of $\si_d(x)$. Choose a finite concatenation
$\ell_1\ell_2\dots\ell_k$ of leaves connecting $\si_d(x)$ and $y$ (here
$x$ is an endpoint of $\ell_1$ and $y$ is an endpoint of $\ell_k$).
Take a pullback-leaf $\ol{xx_1}$ of $\ell_1$ with an endpoint $x$, then
a pullback-leaf $\ol{x_1x_2}$ of $\ell_2$, etc until we get a finite
concatenation of leaves connecting $x$ and some point $y'$ with
$\si_d(y')=y$. This implies that the $\approx_\lam$-class maps onto the
$\approx_\lam$-class of $\si_d(x)$ as desired.

It remains to prove that $\approx_\lam$ satisfies condition (D3)
from Definition~\ref{d-lameq} (i.e., that $\si$ is covering on
$\approx_\lam$-classes). Observe that if $\lam$ is sibling
invariant, this immediately follows from Corollary~\ref{cor-posor1}.
In the case when $\lam$ is Thurston invariant we need an extra
argument. So, suppose that $\g$ is a $\approx_\lam$-class. Some
edges of $\ch(\g)$ may well be leaves of $\lam$. If $\ell$ is an
edge of $\ch(\g)$ which is \emph{not} in $\lam$ then on the side of
$\ell$, opposite to that where $\g$ is located, there must lie an
infinite gap of $\lam$. It is easy to see that if we now add $\ell$
with its grand orbit, we will get a Thurston invariant lamination.
Hence we may assume from the very beginning, that all edges of
$\ch(\g)$ are leaves of $\lam$.

If $|\g|=2$ then (D3) is automatically satisfied. Otherwise let
$\ell$ be an edge of $\ch(\g)$. Then either (1) $\ell$ is approached
from the outside of $\ch(\g)$ by leaves $\ell_i$ of $\lam$, or (2)
there is an infinite gap $G$ on the other side of $\ell$, opposite
to the side where $\ch(\g)$ is located. Below we refer to these as
cases (1) and (2).

Let us now show that if one of the edges of $\ch(\g)$ is critical,
then all are critical. Indeed, let $\ell=\ol{ab}$ be a critical edge
of $\ch(\g)$. In case (1) images of $\ell_i$ separate $\si_d(\ell)$
from the rest of the circle, hence all points of $\g$ map to the
same point. In case (2) both endpoints of $\ell$ are limit points of
vertices of $G$ because otherwise we could extend the $\approx_\lam$
class $\g$. Since $\lam$ is Thurston invariant we conclude that
$\si_d(\ell)$ is a vertex of an infinite gap $\si_d(G)$, approached
from either side on $\uc$ by vertices of $\si_d(G)$. Hence no leaves
can come out of $\si_d(\ell)$ and again all points of $\g$ map to
the same point. Clearly, in this case (D3) from
Definition~\ref{d-lameq} is satisfied.

Suppose now that all edges of $\ch(\g)$ are non-critical. We claim
that if $\ell$ is an edge of $\ch(\g)$ then $\si_d(\ell)$ is an edge
of $\ch(\si_d(\g))$. Indeed, in the case (1) $\si_d(\ell)$ is
approached by leaves of $\lam$ from one side and in the case (2) it
borders an infinite gap of $\lam$ from one side. In either case it
cannot be a diagonal of the gap $\ch(\si_d(\g))$, and the claim is
proved.

It remains to show that as we walk along the boundary of $\ch(\g)$,
the $\si_d$-image of the point walks in the positive direction along
the boundary of $\ch(\si_d(\g))$. Indeed, suppose first that $\si_d(\g)$
consists of two points. Then by the above there are no critical
edges of $\ch(\g)$, and the condition we want to check is
automatically satisfied. Otherwise let $\ch(\si_d(\g))$ be a gap.
Let $\ol{ab}$ be an edge of $\ch(\g))$ such that moving from $a$ to
$b$ along $\ol{ab}$ takes place in the \emph{positive} direction on
the boundary of $\ch(\g)$. Suppose that moving from $\si_d(a)$ to
$\si_d(b)$ along $\ol{\si_d(a)\si_d(b)}$ takes place in the
\emph{negative} direction on the boundary of $\ch(\si_d(\g))$. Then
the properties of Thurston laminations imply that in the case (1)
images of leaves $\ell_i$ will have to cross $\ch(\si_d(\g))$, a
contradiction. On the other hand, in the case (2) they would imply
that the image of the infinite gap $G$ contains $\ch(\si_d(\g))$, a
contradiction again. Hence the map is positively oriented on
$\bd(\ch(\g))$ as desired.
\end{proof}

Theorem~\ref{t:nowander} shows that, up to a ``finite'' restructuring,
a lamination is a q-lamination if and only it is proper; the
appropriate claim is made in Corollary~\ref{c-propq} whose proof is
left to the reader.

\begin{cor}\label{c-propq}
A proper Thurston invariant lamination $\lam$ is a q-lamination if and only if for each
$\approx_\lam$-class $\g$ the edges of its convex hull $\ch(\g)$ belong
to $\lam$ while no leaf of $\lam$ is contained in the interior of  $\ch(\g)$.
\end{cor}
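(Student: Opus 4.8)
The plan is to peel the statement down to the two explicit inclusions hidden in the definition of a q-lamination, letting Theorem~\ref{t:nowander} absorb the hard part. Since $\lam$ is a proper Thurston invariant lamination, Theorem~\ref{t:nowander} already tells us that $\approx_\lam$ is an invariant laminational equivalence relation. Consequently, by Definition~\ref{d-qlam}, the assertion ``$\lam$ is a q-lamination'' is \emph{equivalent} to the assertion ``$\lam$ consists exactly of the boundary edges of the convex hulls $\ch(\g)$ of all $\approx_\lam$-classes $\g$, together with all points of $\uc$''. So the entire corollary reduces to showing that this last description of $\lam$ holds if and only if the two stated conditions (edges of each $\ch(\g)$ lie in $\lam$, and no leaf lies in the interior of $\ch(\g)$) hold. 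I would state this reduction first, so that neither direction has to re-check invariance of $\approx_\lam$.

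For the forward implication I would assume $\lam$ is a q-lamination, so that $\lam$ is precisely the set of boundary edges of the hulls $\ch(\g)$ plus degenerate leaves. The first condition is then immediate, since every edge of every $\ch(\g)$ is by definition a leaf. For the second condition I would use the unlinked property (E2) of the invariant laminational equivalence relation $\approx_\lam$: any non-degenerate leaf $\ell$ of $\lam$ is a boundary edge of some $\ch(\g')$; if $\g'=\g$ then $\ell\subset\Bd(\ch(\g))$, and if $\g'\ne\g$ then $\ch(\g')\cap\ch(\g)=\0$, so $\ell\subset\ch(\g')$ is disjoint from $\ch(\g)$. In either case $\ell$ avoids $\Int(\ch(\g))$, which is exactly the second condition.

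For the converse I would assume the two conditions and verify both inclusions needed for the ``consists exactly of boundary edges'' description. The inclusion that every boundary edge of every $\ch(\g)$ lies in $\lam$ is precisely the first hypothesis, and all points of $\uc$ are degenerate leaves by the very definition of a lamination. For the reverse inclusion I would take an arbitrary non-degenerate leaf $\ell=\ol{xy}\in\lam$; since the single leaf $\ell$ is itself a finite concatenation joining $x$ to $y$, we get $x\approx_\lam y$, so $x$ and $y$ lie in one class $\g$ and $\ell\subset\ch(\g)$. A chord between two vertices of the convex polygon $\ch(\g)$ is either a boundary edge or a diagonal, and in the diagonal case its relative interior lies in $\Int(\ch(\g))$, contradicting the second hypothesis; hence $\ell$ is a boundary edge of $\ch(\g)$. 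This gives the description of $\lam$, completing the converse. There is no deep obstacle here, as Theorem~\ref{t:nowander} supplies the substantive content; the only point demanding care is the reading of ``contained in the interior of $\ch(\g)$'' — because a leaf always has its endpoints on $\uc$, this must be understood as the open leaf lying in $\Int(\ch(\g))$, i.e. the leaf being a diagonal — together with recording that for a convex polygon the edge/diagonal dichotomy is exhaustive, which is what makes the final step watertight.
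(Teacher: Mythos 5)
Your proof is correct and follows exactly the route the paper intends: the paper leaves this proof to the reader, noting only that it rests on Theorem~\ref{t:nowander}, and your argument does precisely that reduction --- invariance of $\approx_\lam$ comes from the theorem, so the corollary collapses to matching the two stated conditions against the ``consists exactly of boundary edges'' clause of Definition~\ref{d-qlam}, with unlinkedness (E2) handling one direction and the edge/diagonal dichotomy for the (finite, hence polygonal) classes handling the other. Your remark that ``contained in the interior'' must be read as the open leaf being a diagonal is the right reading and is the only delicate point.
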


\section{Clean laminations}

Thurston defined clean laminations. In this section we show that every
clean Thurston invariant lamination is a proper sibling invariant
lamination; thus, up to a minor modification every clean Thurston
invariant lamination is a $q$-lamination. We show in the next section
that every clean Thurston $2$-invariant lamination is a $q$-lamination.

\begin{dfn}\label{def-clean}Let $\lam$ be a lamination. Then $\lam$ is \emph{clean}
if no point of $\uc$ is the common endpoint of three distinct leaves of $\lam$.
\end{dfn}

\begin{thm}\label{tclean}
Let $\lam$ be a Thurston $d$-invariant clean lamination. Then $\lam$
is a proper sibling $d$-invariant lamination.
\end{thm}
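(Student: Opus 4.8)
The plan is to verify the two assertions separately: that $\lam$ is proper and that $\lam$ is sibling $d$-invariant. Conditions (1) and (2) of Definition~\ref{dfn-invariant} are immediate, since they coincide with conditions (1) and (2) of Definition~\ref{dfn-Thurston}: forward invariance is identical, and Thurston backward invariance supplies in particular one preimage leaf for each leaf. The entire content therefore lies in sibling condition (3) together with properness, and the one feature of cleanness I will exploit throughout is that no point of $\uc$ is an endpoint of more than two leaves.

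For properness, suppose toward a contradiction that a critical leaf $\overline{vw}$ has a periodic endpoint $v$, or that a critical wedge $\overline{vx}\cup\overline{vy}$ (so $\si_d(x)=\si_d(y)$ and the common image $\si_d(\overline{vx})=m_0$ is a nondegenerate leaf) has a periodic vertex $v$ of period $n$. In the wedge case I would iterate the image leaf: since $\si_d^n$ fixes $v$, the leaf $\si_d^{n-1}(m_0)$ again has $v$ as an endpoint (passing, should some intermediate image be critical, to the critical-leaf case at a point of the periodic orbit). Cleanness forces this returning leaf to be one of $\overline{vx},\overline{vy}$, so one of the wedge endpoints, say $x$, is itself periodic; pulling the wedge back once more along the periodic orbit then produces a third leaf issuing from $v$, contradicting cleanness. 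The critical-leaf case is handled the same way, using that the two preimage-endpoints of the critical value force a second leaf at $v$ whose further pullback overloads $v$.

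For sibling condition (3), let $\ell=\overline{\ha\hb}$ be a leaf with nondegenerate image $m=\overline{ab}$. Applying backward invariance to $m$ yields $d$ pairwise disjoint leaves mapping onto $m$; since distinct leaves of a lamination meet only at common endpoints, these $d$ disjoint leaves use each of the $d$ preimages of $a$ and each of the $d$ preimages of $b$ exactly once. If this collection already contains $\ell$ we are done; otherwise $\ha$ is the endpoint of a distinct collection-leaf $c=\overline{\ha\hb'}$ and $\hb$ is the endpoint of a distinct collection-leaf $c'=\overline{\ha'\hb}$, with $\hb'\neq\hb$ and $\ha'\neq\ha$. Thus $\ell$ and $c$ form a wedge at $\ha$, and $\ell$ and $c'$ form a wedge at $\hb$, both wedges having common image $m$. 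It therefore suffices to show this configuration is impossible.

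The heart of the argument, and the step I expect to be the main obstacle, is exactly this exclusion. I would consider the gap $G$ of $\lam$ lying in the wedge between $\ell$ and $c$ at $\ha$: traversing $\Bd(G)$ we cross $\ell$ and then $c$, whose $\si_d$-images retrace $m$ in opposite directions (this part of the boundary maps as $b\mapsto a\mapsto b$). A positively oriented composition of a monotone map and a covering map onto the boundary of a nondegenerate gap cannot reverse direction in this way, so gap invariance (Definition~\ref{dfn-Thurston}(3)) forces $\si_d(G)$ to equal the leaf $m$ (it cannot be a point, as $\ell\subset G$ has nondegenerate image). Hence $G$ is a collapsing polygon, every edge maps into $m$, and in particular the edge $f\neq\ell$ of $G$ abutting $\hb$ is a genuine leaf. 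The delicate point is to check that $f$ is also distinct from $c'$; once this is confirmed, $f$ is a third leaf at $\hb$, contradicting cleanness. Consequently $\ell$ must itself lie in the disjoint collection, which establishes condition (3) and completes the proof that $\lam$ is a proper sibling $d$-invariant lamination.
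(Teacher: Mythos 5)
Your properness argument is sound and, modulo presentation, equivalent to the paper's: the paper also pulls leaves back along the periodic orbit (twice) to manufacture three leaves at the periodic point; your forward-iteration step in the wedge case is a harmless variant. The gap is in your treatment of sibling condition (3). Your strategy reduces everything to showing that the configuration ``$\ell\notin B$, while $c=\ol{\ha\hb'}\in B$ and $c'=\ol{\ha'\hb}\in B$'' is impossible, and you correctly flag the weak point: proving that the edge $f$ of the collapsing gap $G$ at $\hb$ is distinct from $c'$. That step is not merely delicate --- it is false, and so is the claimed impossibility. Take a clean quadratic lamination whose critical gap is a collapsing quadrilateral $Q$ with vertices $\ha<\hb<\ha'<\hb'$, all four edges mapping to $m=\ol{ab}$. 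With $\ell=\ol{\ha\hb}$, Thurston backward invariance may return $B=\{\ol{\ha\hb'},\ol{\ha'\hb}\}$, which realizes your configuration; here $G=Q$ and the edge of $G$ at $\hb$ other than $\ell$ is exactly $c'=\ol{\ha'\hb}$, so no third leaf appears at $\hb$ and no contradiction is available (nor should one be, since such laminations exist).

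The paper resolves this by splitting into two cases according to whether $c$ and $c'$ lie on opposite sides of $\ell$ or on the same side. In the opposite-side case your argument is essentially the paper's: the gap between $\ell$ and $c$ collapses to $m$ by the orientation argument, its edge at $\hb$ lies on the $G$-side and hence differs from $c'$, and cleanness is violated; so that case is genuinely impossible. In the same-side case, however, the conclusion is not a contradiction but a \emph{construction}: cleanness forces every leaf meeting $G$ to lie in $\bd(G)$, one shows $\bd(G)$ has no critical edges and thus consists of $2n$ leaves all mapping to $m$, exactly $n$ of which belong to $B$; one then takes the $n$ pairwise disjoint alternating edges of $G$ containing $\ell$ and unites them with the $d-n$ leaves of $B$ disjoint from $G$ to obtain a full sibling collection of $\ell$. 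In short, the point of the theorem is not that $\ell$ must belong to the given Thurston collection $B$, but that when it does not, $B$ can be repaired inside the collapsing gap; your proposal is missing this second mechanism entirely.
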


\begin{proof}
Let $\lam$ be a clean Thurston $d$-invariant lamination. Suppose
first that $\lam$ contains a critical leaf $\ol{xy}$ with a periodic
endpoint. Assume that $x$ is fixed. Then there must exist $d$
disjoint leaves which map to $\ol{xy}$. One of these must have $x$
as an endpoint. Label this leaf $\ol{xz}$ (since
$\si_d^*(\ol{xy})=x$, $y\ne z$). Similarly there must exist $d$
leaves which map to $\ol{xz}$ one of which must be $\ol{xw}$ (and,
as above, all three leaves are distinct). Hence $\lam$ is not clean,
a contradiction. The case when $\lam$ contains a critical wedge is
similar. Thus, $\lam$ is proper.

Suppose next that $\ell=\ol{xy}\in\lam$ and $\si_d(\ell)$ is
non-degenerate. To show that $\lam$ is sibling $d$-invariant we need to
show that there are $d-1$ siblings of $\ell$. Since $\lam$ is
a Thurston $d$-invariant lamination, there exists a collection $B$ of $d$ pairwise disjoint
leaves $\ell_1,\dots,\ell_d$ so that $\si_d(\ell_i)=\si_d(\ell)$ for
all $i$. If $\ell=\ell_i$ for some $i$ we are done. Otherwise there
exist $i\ne j$ so that $\ell_i\cap\ell\ne \emptyset\ne
\ell_j\cap\ell$. Let $\ell_i=\ol{xz}, \ell_j=\ol{yt}$ and consider two
cases.

(1) Points $z$ and $t$ are located in distinct components of $\uc\sm
\{x, y\}$. Then $\ell_i$ and $\ell$ are edges of a certain gap $G$ because $\lam$ is clean.
Since $\si_d^*|_{\bd(G)}$ is positively oriented in case
$\ch(\si_d(\partial G))$ is a gap, $G$ must be a finite gap of
$\lam$, collapsing to a leaf. Hence there exists an edge of $G$ with an endpoint $y$,
contradicting the assumption that $\lam$ is clean.

(2) Points $z$ and $t$ belong to the same component of $\uc\sm
\{x, y\}$. Similar to (1), there exists a gap $G$ with edges
$\ell_i, \ell, \ell_j$ (and possibly other edges), collapsed onto
$\si_d(\ell)$ under $\si_d$. Since $\lam$ is clean, every leaf of
$\lam$, which intersects $G$, is contained in $\bd(G)$. Hence $\bd(G)$
consists of $2n$ leaves all of which map to $\si_d(\ell)$, and,
possibly, some critical leaves.

Let us show that there are no critical edges of $G$. Suppose that
$\ol{uv}$ is a critical edge of $G$ such that all vertices of $G$ are
contained in the circle arc $I=[v, u]$. Each leaf of $\lam$ close to
$\ol{uv}$ and with endpoint from $(u, v)$ will have the image which
crosses $\si_d(\ell)$. Hence  there are no such leaves and $\ol{uv}$ is
an edge of a gap $H$ whose vertices belong to $[u, v]$. Since $\lam$ is
clean, there are no edges of $H$ through $u$ or $v$ except for
$\ol{uv}$. Hence there exist sequences $u_i\in \partial(H)$ converging
to $u$ and $v_i\in \partial(H)$ converging to $v$. Then points
$\si_d(u_i)$ and $\si_d(v_i)$ are on opposite sides of $\si_d(u)$. It
follows that the leaf $\si_d(\ell)$ cuts the image of $H$, a
contradiction with the assumption that $\lam$ is a Thurston
$d$-invariant lamination. Thus, $\bd(G)$ consists of $2n$ leaves all of
which map to $\si_d(\ell)$.

This implies that in the collection $\{\ell_1, \dots, \ell_d\}=B$
there are exactly $n$ edges of $G$; denote their collection by $A$.
Since $\lam$ is clean, for each $k$ either $\ell_k\cap G=\emptyset$
or $\ell_k\subset \bd(G)$; hence there are $d-n$ leaves in the
collection $\ell_1, \dots, \ell_d$ which are disjoint from $G$. Now,
starting with $\ell$, select $n$ disjoint siblings of $\ell$ from
$\bd(G)$ and unite them with leaves from $B\sm A$ to get a full set
of siblings of $\ell$. As this can be done for any $\ell$, we see
that $\lam$ is sibling $d$-invariant.
\end{proof}

Suppose that $\lam$ is a clean Thurston $d$-invariant lamination and
let $\approx_\lam$ be the equivalence relation defined in
Definition~\ref{d:fineq}; by Theorem~\ref{tclean} $\approx_\lam$ is a
$d$-invariant laminational equivalence relation. By
Corollary~\ref{c-propq}  and since $\lam$ is clean, $\lam$ is a $q$-lamination if and only if
every chord in the boundary of the convex hull of an equivalence class
of $\approx_\lam$ is a leaf of $\lam$.  We further study the possible difference between the two laminations.
For an equivalence class $\g$, denote by $A_\g$ the union of all leaves
of $\lam$ which join points of $\g$. Since $\lam$ is clean, each $A_\g$ is either
a point, a simple closed curve, a single leaf, or a an arc which contains at least two leaves.
In all but the last case all leaves of $\lam$ which are contained in $A_\g$ are also leaves
of $\lam_{\approx_\lam}$.  It follows that $[\lam\sm \lam_{\approx_\lam}] \cup [\lam_{\approx_\lam}
\sm \lam]$ is contained in the countable union of the convex hulls of equivalence classes
$\g_i$ so that $A_{\g_i}$ is an arc containing at least two leaves. We further
specify this set in
Corollary~\ref{c-primq}.


\begin{cor}\label{c-primq}
Let $\lam$ be a clean Thurston $d$-invariant lamination and  $\g$
an equivalence class of $\approx_\lam$ such that $A_\g$ is an arc
which contains at least two leaves of $\lam$. Suppose that
$\ol{ab}\subset \ch(\g)$. Then if
$\ell=\ol{ab}\in\lam_{\approx_\lam}\sm\lam$, then there exists an
infinite gap $U$ of $\lam$ so that $\ell\sm\{a,b\}$ is contained in
the interior of $U$ and the subarc of $A$ which connects $a$ and $b$
is a maximal concatenation of leaves in $\bd(U)$. Vice versa, if
$\ell=\ol{ab}\in \lam \sm \lam_{\approx_\lam}$, then
$\ell\sm\{a,b\}$  is contained in the interior of $\ch(\g)$ and
$\ell$ is the intersection of two infinite gaps of $\lam$.
\end{cor}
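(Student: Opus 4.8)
The plan is to first invoke Theorem~\ref{tclean} and Theorem~\ref{t:nowander} so that $\approx_\lam$ is an invariant laminational equivalence relation; then $\lam_{\approx_\lam}$ is a genuine q-lamination whose leaves are exactly the edges of the convex hulls $\ch(\g)$ of $\approx_\lam$-classes. Throughout I would use three structural facts about the class $\g$ at hand: since $A_\g$ is an arc with at least two leaves, it is a finite \emph{simple} concatenation of leaves of $\lam$ through points of $\g$ only; every leaf of $\lam$ joining two points of $\g$ lies in $A_\g$ (so $A_\g$ is precisely the family of leaves of $\lam$ inside $\ch(\g)$, and there are finitely many since $\g$ is finite); and, crucially, because $A_\g$ is a simple arc it contains \emph{no} closed loop of leaves. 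I would also use repeatedly that a \emph{finite} gap of $\lam$ has all its vertices in a single $\approx_\lam$-class (its boundary leaves concatenate to connect them), together with cleanness (Definition~\ref{def-clean}) and the elementary fact that $\ch(\g)\cap\uc=\g$.

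First I would treat $\ell=\ol{ab}\in\lam_{\approx_\lam}\sm\lam$. Since $\ell\in\lam_{\approx_\lam}$ and $\ell\subset\ch(\g)$, the chord $\ell$ is an edge of $\ch(\g)$, so $a,b\in\g$ and the arc of $\uc$ cut off by $\ell$ away from $\g$ (the ``outer arc'') contains no point of $\g$; let $B\subset A_\g$ be the subarc joining $a$ to $b$, a concatenation of leaves of $\lam$. The first step is to show the open chord $\ell\sm\{a,b\}$ meets no leaf of $\lam$. Two chords can only meet at an interior point by crossing, so it suffices to exclude a leaf $\ol{cd}$ crossing $\ell$: such a leaf would have an endpoint $c$ in the outer arc and would enter the ``lune'' bounded by $\ell$ and $B$, so (being disjoint from the leaves of $B$) its other endpoint would have to be an interior vertex $v$ of $B$; but $v\in\g$ is already an endpoint of two leaves of $A_\g$, so by cleanness it admits no third leaf, a contradiction. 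Hence $\ell\sm\{a,b\}$ lies in the interior of a single gap $U$ with $a,b\in\partial(U)$, and the same disjointness forces the lune into $U$, whence $B\subset\bd(U)$.

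Next I would show $U$ is infinite and $B$ is maximal. Were $U$ finite, its vertices would form one $\approx_\lam$-class; containing $a,b\in\g$, this class is $\g$, forcing outer-arc vertices of $U$ into $\g$ — impossible since the outer arc has no point of $\g$. So $U$ is infinite. For maximality, at $a$ the only leaf of $\bd(U)$ is the first leaf of $B$: by cleanness $a$ carries at most one further leaf, which runs toward another point of $\g$, i.e. into $\ch(\g)$ on the far side of $\ell$, and so bounds a different gap and is not in $\bd(U)$; thus $\bd(U)$ passes from $B$ to the circle at $a$, and symmetrically at $b$. Therefore $B$ is a maximal concatenation of leaves in $\bd(U)$, giving the first assertion.

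Finally I would treat $\ell=\ol{ab}\in\lam\sm\lam_{\approx_\lam}$. As $\ell\in\lam$ joins $a,b\in\g$ but is \emph{not} an edge of $\ch(\g)$ (it is not in $\lam_{\approx_\lam}$), it must be a diagonal, so $\ell\sm\{a,b\}\subset\Int(\ch(\g))$, which is the first half. Since $A_\g$ has only finitely many leaves and these are all the leaves of $\lam$ inside $\ch(\g)$, the leaf $\ell$ is not a two-sided limit of leaves inside $\ch(\g)$, so it borders a gap $G_1$ on one side and $G_2$ on the other, with $G_1\cap G_2=\ell$. If some $G_i$ were finite, its boundary would be a cycle of leaves with vertices in $\ch(\g)\cap\uc=\g$, i.e. a closed loop of leaves of $A_\g$, impossible since $A_\g$ is a simple arc; hence both $G_1,G_2$ are infinite and $\ell$ is their intersection. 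I expect the main obstacle to be the first step of the second paragraph — proving $\ell\sm\{a,b\}$ is disjoint from $\lam^*$ and identifying the gap $U$ together with its boundary concatenation $B$ — where cleanness and the simple-arc structure of $A_\g$ must be combined carefully; the infiniteness of the relevant gaps in both cases then follows uniformly from the same loop-avoidance principle.
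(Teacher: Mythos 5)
Your proof is correct and takes essentially the same approach as the paper's much terser one: both directions run through the same key facts — cleanness, the absence of closed loops of leaves in the simple arc $A_\g$, and the observation that a finite gap's vertices lie in a single $\approx_\lam$-class — to produce the infinite gap $U$ with $[a,b]_{A_\g}\subset\Bd(U)$ in one direction, and the two infinite gaps meeting along the isolated diagonal $\ell$ in the other. One harmless imprecision: in your maximality step the second leaf at $a$ is separated from $U$ by the first leaf of $B$ (its endpoint lies in the arc between $a$ and the first interior vertex of $B$, by the simple-arc property of $A_\g$), not by $\ell$ itself, since $U$ straddles $\ell$.
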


\begin{proof}
Suppose that $\ol{ab}\subset \ch(\g)$. and
$\ell=\ol{ab}\in\lam_{\approx_\lam}\sm\lam$. Since $\g$ is finite,
no leaf of $\lam$ can intersect the chord $\ell$ inside $\disk$ and
there exists a gap $U$ of $\lam$ such that $\ell\sm\{a,b\}$ is
contained in the interior of $U$. If $U$ is finite, then
$\bd(U)\subset A_\g$, a contradiction. Since $\lam$ is clean, the
subarc $[a,b]_{A_{\g}}$ of $A_\g$ is contained in the boundary of
$U$. Moreover, since $\ell$ is an edge of $\ch(\g)$,
$[a,b]_{A_{\g}}$ is a maximal concatenation of leaves in $\bd(U)$.

Conversely, suppose that $\ell=\ol{ab}\in \lam \sm
\lam_{\approx_\lam}$. Then $\ell\sm \{a,b\}$ is contained in the
interior of $\ch(\g)$. Hence $\ell$ is isolated and there exist two
gaps $U,V$ of $\lam$ so that $\ell=U\cap V$. If one of these gaps is
finite, then its boundary is a subset of $A_\g$, a contradiction.
\end{proof}

\section{Quadratic invariant laminations}

In this section we study quadratic laminations. First we show that
Corollary~\ref{c-propq} can be made more precise in the quadratic case.

If a $2$-invariant q-lamination $\lam$ has a finite critical gap $L$ then one can
insert a critical diameter  connecting two vertices of $L$ and then
pull it back along the backward orbit of $L$. Also, if $L$ has six
vertices or more, one can insert a critical (collapsing)
quadrilateral inside $L$ and then pull it back along the backward
orbit of $L$; one can also insert in $L$ a quadrilateral which
itself splits into two triangles by a diameter and then pull it back
along the backward orbit of $L$. In this way one can create  proper
sibling invariant laminations which are not q-laminations. In fact,
a lamination may already exhibit the above described phenomena.
Thus, if a lamination contains a finite critical polygon $L$ which
contains a critical leaf (collapsing quadrilateral) in the interior
of its convex hull, then we say that it has a \emph{critical
splitting by a leaf} (resp. \emph{quadrilateral}).
Corollary~\ref{c-quadprop} shows that these two mechanisms are
\emph{the only} ways a proper quadratic lamination can be a
non-q-lamination.

\begin{cor}\label{c-quadprop}
A quadratic sibling invariant lamination is a q-lamination if and only if it is proper
and does not have a critical leaf (quadrilateral) splitting.
\end{cor}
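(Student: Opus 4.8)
The plan is to make Corollary~\ref{c-propq} do all the structural work and to reduce the statement to a purely quadratic analysis of diagonals. Recall that Corollary~\ref{c-propq} says that a proper Thurston invariant lamination is a q-lamination precisely when, for every $\approx_\lam$-class $\g$, (a) every edge of $\ch(\g)$ is a leaf of $\lam$, and (b) no leaf of $\lam$ lies in the interior of $\ch(\g)$. Since a sibling $2$-invariant lamination is Thurston $2$-invariant by Theorem~\ref{gapinv}, it suffices to match conditions (a)--(b) against the absence of critical leaf/quadrilateral splittings. The forward implication is immediate: if $\lam$ is a q-lamination it is proper by Lemma~\ref{l-qprop}; and if $\lam$ had a critical leaf (respectively collapsing quadrilateral) splitting, then there would be a finite critical polygon $L$ all of whose edges are leaves, whose vertices therefore lie in a single class $\g$ with $L\subset\ch(\g)$. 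The inserted critical leaf, respectively the edges of the inserted collapsing quadrilateral, would then be leaves of $\lam$ in the interior of $\ch(\g)$, contradicting (b). Hence a q-lamination is proper and free of critical splittings.

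For the reverse implication I assume $\lam$ is proper with no critical splitting. By Theorem~\ref{t:nowander}, $\approx_\lam$ is already an invariant laminational equivalence relation, so every class is finite and (since we are in degree two) preperiodic. I must verify (a) and (b). The heart of the matter is (b): suppose some leaf $m=\ol{pq}$ lies in the interior of $\ch(\g)$, so $m$ is a diagonal of the finite polygon $\ch(\g)$ with $p,q\in\g$ and vertices of $\g$ on both sides of $m$. The key observation is that the whole forward orbit $\{(\sigma_2^{*})^n(m)\}$ consists of leaves of $\lam$ (by Definition~\ref{dfn-invariant}(1)), hence of pairwise disjoint chords, and that as long as the classes $\si_2^n(\g)$ are non-critical the map $\si_2$ is an order-preserving bijection on them, so each image of $m$ remains a diagonal. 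I would follow this orbit up to the first critical class $\g_c$ in the forward orbit.

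At $\g_c$ the degree-two fold identifies antipodal pairs, so the image $m_c$ of $m$ is a diagonal whose antipodal sibling $m_c'$ again has both endpoints in $\g_c$. If $m_c$ happens to be a critical diameter, it is a critical leaf in the interior of the finite critical polygon $\ch(\g_c)$ — a critical leaf splitting. Otherwise $m_c$ and its sibling $m_c'$ are two disjoint diagonals with a common image, and feeding them into the collapsing-polygon machinery of Lemmas~\ref{colpol} and \ref{colpol1} yields a collapsing quadrilateral sitting in the interior of $\ch(\g_c)$ — a critical quadrilateral splitting. Either way the no-splitting hypothesis is contradicted, so no interior leaf $m$ can exist and (b) holds. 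For (a) I would argue that a missing edge $\ol{ab}$ of $\ch(\g)$ would have to border an infinite gap $U$ on its outer side, whose (pre)periodic image is a Siegel or Fatou gap; using sibling invariance together with the finiteness of $\g$, one shows this is incompatible with $a,b$ lying in a single finite class, so every edge of $\ch(\g)$ is in fact a leaf. With (a) and (b) in hand, Corollary~\ref{c-propq} gives that $\lam$ is a q-lamination.

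The main obstacle is the orbit analysis for (b), specifically ruling out that the diagonal's forward orbit stays non-critical forever. By preperiodicity it would then land on a periodic class $\g^*$ on which the return map $\si_2^{P}$ acts as an orientation-preserving rotation, and I expect the real difficulty to be showing that the rotated copies of the diagonal must either cross (violating disjointness of leaves) or force $\si_2^{P}$ to identify antipodal points, i.e. force $\g^*$ to be critical after all. Controlling the number of vertices along the way relies on Theorem~\ref{t:kiwander} (no wandering polygon with more than $d^d=4$ vertices for $d=2$) together with the absence of wandering finite classes, while the delicate bookkeeping at the critical fold — deciding between the diameter and the collapsing-quadrilateral outcome and checking that all four edges of the quadrilateral are genuinely leaves — is where the careful use of Lemmas~\ref{colpol} and \ref{colpol1} will be needed.
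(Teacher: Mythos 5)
Your overall skeleton matches the paper's: both arguments reduce to Corollary~\ref{c-propq}, and your treatment of the precritical case (a diagonal of a critical class is either a critical leaf, giving a leaf splitting, or has a sibling inside the same class, giving a collapsing quadrilateral) is essentially the paper's. But the case you yourself flag as ``the real difficulty'' --- a diagonal whose forward orbit never meets a critical class --- is a genuine gap, not a technicality, and you do not resolve it. The paper closes this case by quoting Thurston's theorem that the edges of the convex hull of a \emph{periodic} class form a single periodic orbit of edges; consequently a leaf of $\lam$ inside such a hull would have to cross one of its own forward images, which is impossible for leaves of a lamination. Without that result (or a completed proof of your ``rotated copies must cross'' claim), the argument does not close. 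Relatedly, your justification of preperiodicity is wrong as stated: Theorem~\ref{t:kiwander} for $d=2$ only excludes wandering polygons with \emph{more than four} vertices, so it says nothing about wandering triangles or quadrilaterals, and ``the absence of wandering finite classes'' is not available as background --- two-point classes can wander, and for classes with at least three points this absence \emph{is} Thurston's No Wandering Triangles Theorem, which is exactly what the paper invokes and what you must cite instead.

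Second, condition (a) of Corollary~\ref{c-propq} --- that every edge of $\ch(\g)$ is a leaf of $\lam$ --- is not established by your sketch. The observation that a missing hull edge must border an infinite gap is correct (a finite gap across that chord would enlarge the class), but ``its periodic image is a Siegel or Fatou gap, which is incompatible with $a,b$ lying in a single finite class'' is not an argument: finite classes adjacent to infinite gaps are ubiquitous in q-laminations, so adjacency alone yields no contradiction. The paper proves (a) differently: for a preperiodic, non-precritical class it again uses the one-periodic-orbit-of-edges theorem (at least one hull edge is a leaf because the class is spanned by leaves, hence all edges in the periodic orbit are leaves, and this pulls back to $\g$ since the class maps one-to-one); for a precritical class it shows the edges of the critical image class are limits of other classes and hence are leaves because $\lam$ is closed. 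You need some version of these arguments; the Siegel/Fatou dichotomy by itself buys nothing.
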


\begin{proof}
Clearly every $q$-lamination is proper and has no critical splitting
(leaf or quadrilateral). Assume next that $\lam$ is a proper sibling
invariant lamination which does not have a critical splitting (leaf or
quadrilateral). Define $\approx_\lam$ as in Definition~\ref{d:fineq}.
Let us show that for each $\approx_\lam$-class $\g$ the edges of its
convex hull $\ch(\g)$ belong to $\lam$. Suppose that for a
$\approx_\lam$-class $\g$ there is an edge of $\ch(\g)$ not included in
$\lam$. By definition, there are finite concatenations of edges of
$\lam$, connecting all points of $\g$. Hence $\ch(\g)$ cannot be a
leaf and  $\g$ consists of more than two points. Then by
Thurston's No Wandering Triangles Theorem \cite{thu09} $\g$ is either
(pre)periodic or (pre)critical (observe that $\g$ can first map into a
critical class of $\approx_\lam$ and then into a periodic class of
$\approx_\lam$, but not vice versa because $\lam$ is proper).

Consider cases. Suppose that $\g$ is (pre)periodic but not
(pre)critical. Then for some $n$ the $\approx_\lam$-class $\si_2^n(\g)$
is periodic. By an important result of \cite{thu09} the edges of
$\ch(\si_2^n(\g))$ form one periodic orbit of edges. Since at least one of
them is in $\lam$, they all are in $\lam$. Since $\g$ maps onto
$\si_2^n(\g)$ one-to-one by our assumptions, and because $\lam$ is a
sibling (and hence, by Theorem~\ref{gapinv}, a Thurston) invariant
lamination, then all edges of $\ch(\g)$ are in $\lam$ as desired.

Now, suppose that $\g$ is precritical and $\si_2^n(\g)$ is critical.
Again, we may assume that $\ch(\g)$ is not a leaf. Since $\si_2^n(\g)$ is a
critical $\approx_\lam$-class, it must have $2k$-edges and must map
onto its image two-to-one. It follows that the edges of $\si_2^n(\g)$
are limits of sequences of $\approx_\lam$-classes. Indeed, otherwise
there are gaps of $\approx_\lam$ sharing common edges with
$\si_2^n(\g)$. By construction this would mean that these gaps are
infinite and hence a forward image of one of these gaps is a critical
$\approx_\lam$-class. Since we deal with quadratic laminations and $\g$
is also critical, it is easy to see that this is impossible. Thus, the
edges of $\si_2^n(\g)$ are limits of sequences of
$\approx_\lam$-classes which implies that edges of $\si_2^n(\g)$ are
leaves of $\lam$. As before, since $\lam$ is a Thurston invariant
lamination, then all edges of $\g$ are leaves of $\lam$. Thus, in any
case if $\g$ is a $\approx_\lam$-class then its edges are leaves of
$\lam$.

It remains to show that $\ch(\g)$ cannot contain any leaves of $\lam$
in its interior. Indeed, suppose otherwise. We may assume that $\g$ has at least
4 vertices. Suppose that $\g$ is (pre)critical and $\si_2^n(\g)$ is
critical. Let us show that any leaf inside $\si_2^n(\g)$ must have the
image which is an edge or a vertex of $\si_2^{n+1}(\g)$. Indeed, it
suffices to consider the case when $\si_2^n(\g)$ has at least six vertices
and $\si_2^{n+1}(\g)$ is a gap. By No Wandering Triangles Theorem
\cite{thu09} it is (pre)periodic and $\si_2^{n+m}(\g)$ is periodic. By the
above quoted result of \cite{thu09} the edges of $\ch(\si_2^{n+m}(\g))$
form one periodic orbit of edges. Hence if there is a leaf of $\lam$
inside $\ch(\si_2^{n+m}(\g))$, it will cross itself under the appropriate
power of $\si_2$, a contradiction. Thus, any leaf inside $\si_2^n(\g)$
must have the image which is an edge or a vertex of $\si_2^{n+1}(\g)$.

We show next that such a leaf cannot exist. In other words,
since $\lam$ does not admit a critical leaf (quadrilateral) splitting,
we need to show that no other splitting of $\ch(\si_2^n(\g))$ by leaves of
$\lam$ is possible either. Indeed, suppose that there are leaves of
$\lam$ inside $\ch(\si_2^n(\g))$. It cannot be just one critical leaf as
then $\lam$ would admit a critical leaf splitting. Neither can it be a
quadrilateral or a quadrilateral with a critical leaf inside (because
$\lam$ does not admit a critical quadrilateral splitting). Now, suppose
that there is a unique leaf $\ell$ of $\lam$ inside $\si_2^n(\g)$ such
that $\si_2(\ell)$ is an edge of $\si_2^{n+1}(\g)$. Then it has to have a
sibling leaf which will also be a leaf inside $\si_2^n(\g)$. Hence $\si_2^n(\g)$
contains a collapsing quadrilateral, a
contradiction. As these possibilities exhaust all possibilities for
leaves inside $\ch(\si_2^n(\g))$, it follows that there are no leaves
inside $\ch(\si_2^n(\g))$ and hence no leaves inside $\ch(\g)$ as desired.
\end{proof}

\begin{cor}
Suppose that $\lam$ is a clean Thurston $2$-invariant lamination.
Then $\lam$ is a  $q$-lamination.
\end{cor}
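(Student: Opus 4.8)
The plan is to feed $\lam$ into the two preceding results of this section. By Theorem~\ref{tclean}, every clean Thurston $2$-invariant lamination is a proper sibling $2$-invariant lamination, so $\lam$ is in particular a proper quadratic sibling invariant lamination. Corollary~\ref{c-quadprop} then tells us that $\lam$ is a $q$-lamination as soon as we rule out the two obstructions listed there, namely a critical leaf splitting and a critical quadrilateral splitting. Thus the entire remaining content of the proof is to show that a clean lamination admits no critical splitting of either kind.

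I would argue by contradiction, extracting a forbidden triple point from any critical splitting. Suppose $\lam$ contains a finite critical polygon $L$ (whose edges are leaves of $\lam$) carrying in the interior of $\ch(L)$ either a critical leaf or a collapsing quadrilateral $Q$. Because $L$ is a finite inscribed polygon, $\ch(L)\cap\uc$ is exactly the vertex set of $L$; hence the endpoints of the critical leaf, respectively the vertices of $Q$, all lie among the vertices of $L$, and any chord of the splitting lying in the interior of $\ch(L)$ is a \emph{diagonal} of $L$, i.e.\ it joins two non-adjacent vertices. Fix such an interior diagonal $\ol{pq}$: in the leaf case it is the critical leaf itself, and in the quadrilateral case it is one of the diagonal edges of $Q$. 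At the endpoint $p$ the two edges of $L$ incident to $p$ are leaves of $\lam$, and both differ from $\ol{pq}$, since $\ol{pq}$, being a diagonal, is not an edge of $L$. Therefore three distinct leaves of $\lam$ share the endpoint $p$, contradicting the cleanness of $\lam$. This shows $\lam$ has no critical splitting, and Corollary~\ref{c-quadprop} yields that $\lam$ is a $q$-lamination.

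The only step needing genuine care --- and which I regard as the main (if modest) obstacle --- is the claim that at least one edge of an inserted collapsing quadrilateral is really a diagonal of $L$ rather than an edge of $L$. Here I would use that $Q$ sits in the interior of $\ch(L)$ together with the observation that a collapsing quadrilateral pairs its vertices by the half-turn, so that its four vertices alternate between the two $\si_2$-fibers and hence cannot be consecutive on the surrounding critical gap $L$ (which must in fact have at least six vertices); consequently at least one edge of $Q$ skips a vertex of $L$ and is a genuine diagonal. Once this is in place the local triple-point argument is immediate, and no further case analysis is required.
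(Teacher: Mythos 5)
Your proposal is correct and takes essentially the same route as the paper: apply Theorem~\ref{tclean} to get that $\lam$ is proper and sibling invariant, then invoke Corollary~\ref{c-quadprop} after ruling out critical leaf/quadrilateral splittings. The only difference is that the paper simply asserts in one sentence that cleanness excludes such splittings, while you spell out the (correct) justification --- any inserted critical leaf or collapsing quadrilateral contributes a diagonal of the critical polygon, producing a vertex where three distinct leaves meet.
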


\begin{proof}
Suppose that $\lam$ is a clean, Thurston $2$-invariant lamination.
By Theorem~\ref{tclean}, $\lam$ is proper and sibling invariant.
Moreover, since $\lam$ is clean, it does not have a critical leaf
(quadrilateral) splitting. Hence the result follows from
Corollary~\ref{c-quadprop}.
\end{proof}

\bibliographystyle{amsalpha}

\end{document}